\newcommand{\Dchaintwo}[4]{
\rule[-3\unitlength]{0pt}{8\unitlength}
\begin{picture}(14,5)(0,3)
\put(1,2){\ifthenelse{\equal{#1}{l}}{\circle*{2}}{\circle{2}}}
\put(2,2){\line(1,0){10}}
\put(13,2){\ifthenelse{\equal{#1}{r}}{\circle*{2}}{\circle{2}}}
\put(1,5){\makebox[0pt]{\scriptsize #2}}
\put(7,4){\makebox[0pt]{\scriptsize #3}}
\put(13,5){\makebox[0pt]{\scriptsize #4}}
\end{picture}}
\def \To{\longrightarrow}
\def \dim{\operatorname{dim}}
\def \gr{\operatorname{gr}}
\def \Ker{\operatorname{Ker}}
\def \Aut{\operatorname{Aut}}
\def \id{\operatorname{id}}
\def \1{\mathbf{1}}
\def \C{\mathcal{C}}
\def \D{\Delta}
\def \e{\varepsilon}
\def \M{\mathrm{M}}
\def \MM{\mathbbm{M}}
\def \R{\mathbbm{R}}
\def \N{\mathbb{N}}
\def \S{\mathcal{S}}
\def \Z{\mathbb{Z}}
\def \unit{\mathbbm{1}}
\def \q{\mathbbm{q}}
\def \deg{\operatorname{deg}}
\def \m{\mathbbm{m}}
\def \n{\mathbbm{n}}
\def \g{\mathbbm{g}}
\numberwithin{equation}{section}
\numberwithin{table}{section}
\numberwithin{equation}{section}
\newtheorem{theorem}{Theorem}[section]
\newtheorem{lemma}[theorem]{Lemma}
\newtheorem{proposition}[theorem]{Proposition}
\newtheorem{corollary}[theorem]{Corollary}
\newtheorem{definition}[theorem]{Definition}
\newtheorem{example}[theorem]{Example}
\newtheorem{remark}[theorem]{Remark}
\newtheorem{remarks}[theorem]{Remarks}
\begin{document}
\title{Finite quasi-quantum groups of rank two}\thanks{$^\dag$Supported by SRFDP 20130131110001, NSFC 11371186 and SDNSF ZR2013AM022.}

\subjclass[2010]{16T05, 18D10, 17B37}
\keywords{quasi-quantum group, tensor category, Nichols algebra}

\author{Hua-Lin Huang}
\address{School of Mathematics, Shandong University, Jinan 250100, China} \email{hualin@sdu.edu.cn}

\author{Gongxiang Liu}
\address{Department of Mathematics, Nanjing University, Nanjing 210093, China} \email{gxliu@nju.edu.cn}

\author{Yuping Yang}
\address{School of Mathematics, Shandong University, Jinan 250100, China}\email{yupingyang@mail.sdu.edu.cn}
\author{Yu Ye}
\address{School of Mathematics, University of Science and Technology of China, Hefei 230026, China} \email{yeyu@ustc.edu.cn}
\date{}
\maketitle

\begin{abstract}
This is a contribution to the structure theory of finite pointed quasi-quantum groups. We classify all finite-dimensional connected graded pointed Majid algebras of rank two which are not twist equivalent to ordinary pointed Hopf algebras.
\end{abstract}


\section{Introduction}
The theory of finite tensor categories \cite{EO} has aroused much interest in recent years. Among which, a proper classification theory is highly welcome and certainly is very challenging. As the general classification problem seems still far out of reach, it is necessary to narrow the scope and focus first on some interesting classes. In this respect, fusion and multi-fusion categories, that is, semisimple finite tensor and multi-tensor categories, are first investigated in depth, see \cite{ENO, O} and references therein. To move on, Etingof and Gelaki proposed in their pioneering work \cite{EG1} to classify finite pointed tensor categories which are nonsemisimple. By pointed it is meant that the simple objects are invertible. There are multifold reasons for this restriction: firstly, this kind of reduction is standard and powerful in representation theory; secondly, this class of tensor categories are essentially concrete, i.e., they admit quasi-fiber functors and they can be realized as the module categories of finite-dimensional elementary quasi-Hopf algebras by the Tannakian formalism \cite{EO}; thirdly, this theory is a natural generalization of the deep and beautiful theory of elementary (or equivalently, finite-dimensional pointed) Hopf algebras, see \cite{an, as2, AS1, H0, H3}.

In \cite{EG1, EG2}, Etingof and Gelaki obtained a series of classification results about graded elementary quasi-Hopf algebras over cyclic groups of prime order; in \cite{EG3, G}, they studied graded elementary quasi-Hopf algebras over general cyclic groups and their liftings. One main achievement of this series of works is a complete classification of elementary quasi-Hopf algebras of rank 1. More importantly, a novel method of constructing genuine quasi-Hopf algebras from known pointed Hopf algebras is invented. Along the same vein, Angiono classified in \cite{A} finite-dimensional elementary quasi-Hopf algebras over cyclic groups whose orders have no small prime divisors. On the other hand, our previous works \cite{qha1, qha2, qha3} introduce many useful ideas and tools from the representation theory of finite-dimensional algebras into the theory of pointed tensor categories and quasi-quantum groups (including quasi-Hopf algebras and their duals in accordance with the philosophy of Drinfeld's theory of quantum groups \cite{D, Dr}). In particular, a quiver framework is set up and a general method of constructing quasi-quantum groups and pointed tensor categories via projective representations of finite groups and quiver representation theory is provided.

However, except for some sporadic examples \cite{EG2}, so far all finite quasi-quantum groups obtained in the literature are either over cyclic groups or of rank 1 (in fact, mostly both). An obvious reason, to the authors, is that a uniform expression of 3-cocycles over non-cyclic groups was not available. This prevents us from a desired control of the associators of quasi-quantum groups. With the explicit unified formulas of 3-cocycles on general finite abelian groups recently offered in \cite{bgrc1, bgrc2}, now it seems possible to pursuit the classification of finite quasi-quantum groups and finite pointed tensor categories in a much greater scope. As a crucial step to move forward, we should first tackle the classification of finite quasi-quantum groups of rank 2 as clearly suggested by the successful development strategy of the classification theory of finite-dimensional pointed Hopf algebras, see \cite{h, h2}. This is the main aim of the present paper.

Let $H = \bigoplus_{n \ge 0} H_n$ be a graded elementary quasi-Hopf algebra. The novel idea of \cite{EG1, EG2, G} is that, if $H_0$ is the group algebra of a cyclic group $\mathbb{G}=\Z_\n,$ then $H$ can be embedded into a bigger quasi-Hopf algebra $\widetilde{H}$ which is twist equivalent to a graded elementary Hopf algebra $H'$ with $H'_0= k G$ where $G=\Z_n$ with $n=\n^2.$ The crux of this fact is essentially due to group cohomology. More precisely, if $\Phi$ is a 3-cocycle on $\mathbb{G},$ then its pull-back $\pi^* (\Phi)$ along the canonical projection $\pi \colon G \to \mathbb{G}$ vanishes, i.e., a 3-coboundary. In this situation, we say that $\Phi$ is \emph{resolvable}. The first key observation of the present paper is that this fact can be generalized to all abelian groups of form $\Z_\m \times \Z_\n$ as anticipated in \cite{EG1}. This relies heavily on our previous work of linear braided Gr-categories \cite{bgrc1}. The resolvability of any 3-cocycle on $\Z_\m \times \Z_\n$ motivates us to pursuit a similar connection between a graded quasi-Hopf algebra $H$ with $H_0=k \Z_\m \times \Z_\n$ and an appropriate Hopf algebra $H'.$ The second key observation of the present paper is that an explicit connection can be built by overcoming two difficulties, explained in below, which are relatively mild in the case of cyclic groups \cite{EG1, EG2, EG3, G, A}.

In accordance with our previous works \cite{qha1, qha2, qha3}, we always work on the dual situation. For this, let $\MM = \bigoplus_{i \ge 0} \MM_i$ be a coradically graded pointed Majid algebra, then $\MM_0$ is a group Majid algebra $(k \mathbb{G}, \Phi).$ Similar to the case of pointed Hopf algebras, we may factorize $\MM$ as $\R \# k \mathbb{G}$ by a quasi-version of the bosonization procedure. Here $\R$ is the coinvariant subalgebra of $\MM$ with respect to the natural coaction of $k \mathbb{G}.$ The first difficulty is the generation problem, that is, whether $\R$ is generated in degree 1. The second difficulty is to determine a suitable resolution $\pi \colon G \to \mathbb{G}$ such that $\R$ becomes a Nichols algebra in the twisted Yetter-Drinfeld category ${^{G}_{G}\mathcal{YD}^{\pi^{*}(\Phi)}}.$ If $\MM$ is assumed of rank 2, then we can overcome these two difficulties and thus the Majid algebra $\MM$ may be realized as an easy quotient of a pointed Majid algebra $\mathbf{M}$ which is twist equivalent to an ordinary pointed Hopf algebra $\mathbf{H}.$ This will facilitate applications to Majid algebras the theory of finite-dimensional pointed Hopf algebras. In other words, we get the following diagram:

\begin{figure}[hbt]
\begin{picture}(100,100)(0,0)
\put(0,90){\makebox(0,0){$ \MM=\R\#k\mathbbm{G}\;\;\;$ \textsf{Original Majid algebra}}}
\put(-50,85){\vector(0,-1){30}}
\put(0,50){\makebox(0,0){$ {\textbf{M}}=\R\#kG\;\;\;$ \textsf{Bigger Majid algebra}}}
\put(-50,45){\vector(0,-1){30}}\put(-40,30){\makebox(0,0){$\exists \, J$}}
\put(0,10){\makebox(0,0){$ \textbf{H}=(\R\#kG)^{J}\;\;\;$ \textsf{Ordinary Hopf algebra}}}
\end{picture}
\end{figure}

In order to take full advantage of the theory of finite-dimensional pointed Hopf algebras, especially Heckenberger's well-known classification result of finite-dimensional rank $2$ Nichols algebras \cite{h, h2} for the present purpose, we still need to answer the following question: For which finite-dimensional pointed Hopf algebra of rank $2$, can one reverse the above diagram to get a genuine pointed Majid algebra? Our third key observation is that this question can be reduced to solving some elementary congruence equations, see \eqref{eq6.8}. We show that such congruence equations have at most one solution and we give a simple criterion to determine when they do. Therefore, we complete the above one-way diagram into to a circuit in below and this finally leads to the desired classification of finite quasi-quantum groups of rank 2.

\begin{figure}[hbt]
\begin{picture}(180,105)(0,0)
\put(0,90){\makebox(0,0){$ \MM=\R\#k\mathbbm{G}\;\;\;$ \textsf{Original Majid algebra}}}     \put(150,90){\makebox(0,0){$ \MM=\mathscr{B}^{J^{-1}}\#k\mathbbm{G}$ }}
\put(-50,85){\vector(0,-1){30}}                                                              \put(150,55){\vector(0,1){30}} \put(200,70){\makebox(0,0){\small{Eq. \eqref{eq6.8} soluble}}}
\put(0,50){\makebox(0,0){$ {\textbf{M}}=\R\#kG\;\;\;$ \textsf{Bigger Majid algebra}}}           \put(150,50){\makebox(0,0){$ \textbf{H}^{J^{-1}}=\mathscr{B}^{J^{-1}}\#kG$ }}
\put(-50,45){\vector(0,-1){30}}\put(-40,30){\makebox(0,0){\small\textsf{ $\exists\, J$}}}  \put(150,15){\vector(0,1){30}}
\put(0,10){\makebox(0,0){$ \textbf{H}=(\R\#kG)^{J}\;\;\;$ \textsf{Ordinary Hopf algebra}}}      \put(150,10){\makebox(0,0){$ \textbf{H}=\mathscr{B}\#kG$ }}
\end{picture}
\end{figure}

The paper is organized as follows. Some necessary concepts, notations and facts are collected in Section 2. In particular, Nichols algebras in twisted Yetter-Drinfeld categories ${^{\mathbbm{G}}_{\mathbbm{G}}\mathcal{YD}^{\Phi}}$ are introduced. Section 3 is devoted to the resolvability of an arbitrary $3$-cocycle $\Phi$ on $\mathbb{Z}_{\m}\times \Z_{\n}$, that is, find a group epimorphism $\pi: \ G=\Z_m\times \Z_n \to \mathbb{Z}_{\m}\times \Z_{\n}$ such that $\pi^{*}(\Phi)$ is a coboundary on $G.$ The result of this section is one of several key ingredients of our classification procedure. The generation problem for finite-dimensional pointed Majid algebras of rank 2 is established in Section 4. Our classification procedure and the main result are given in Section 5. Sections 6, 7, and 8 are designed to give explicit classification results based on the previous sections. There is also an appendix of the list of full binary trees used in Sections 6-8.

Throughout of this paper, $k$ is an algebraically closed field with characteristic zero and all vector spaces, linear mappings, (co)algebras and unadorned tensor product $\otimes$ are over $k.$

\section{Preliminaries}
This section is devoted to some preliminary concepts, notations and facts.

\subsection{Majid algebras.}
The concept of Majid algebras is dual to that of quasi-Hopf algebras \cite{Dr}, and can be given as follows.
\begin{definition} A Majid algebra is a coalgebra $(H,\D,\e)$ equipped with a
compatible quasi-algebra structure and a quasi-antipode. Namely,
there exist two coalgebra homomorphisms $$\M: H \otimes H \To H, \ a
\otimes b \mapsto ab, \quad \mu: k \To H,\ \lambda \mapsto \lambda
1_H,$$ a convolution-invertible map $\Phi: H^{\otimes 3} \To k$
called associator, a coalgebra antimorphism $\S: H \To H$ and two
functionals $\alpha,\beta: H \To k$ such that for all $a,b,c,d \in
H$ the following equalities hold:
\begin{eqnarray}
&a_1(b_1c_1)\Phi(a_2,b_2,c_2)=\Phi(a_1,b_1,c_1)(a_2b_2)c_2,\\
&1_Ha=a=a1_H, \\
&\Phi(a_1,b_1,c_1d_1)\Phi(a_2b_2,c_2,d_2) =\Phi(b_1,c_1,d_1)\Phi(a_1,b_2c_2,d_2)\Phi(a_2,b_3,c_3),\\
&\Phi(a,1_H,b)=\e(a)\e(b). \\
&\S(a_1)\alpha(a_2)a_3=\alpha(a)1_H, \quad a_1\beta(a_2)\S(a_3)=\beta(a)1_H, \label{eq2.5} \\
&\Phi(a_1,\S(a_3),a_5)\beta(a_2)\alpha(a_4)
=\Phi^{-1}(\S(a_1),a_3,\S(a_5)) \alpha(a_2)\beta(a_4)=\e(a). \label{eq2.6}
 \end{eqnarray}
Throughout we use the Sweedler sigma notation $\D(a)=a_1 \otimes
a_2$ for the coproduct and $a_1 \otimes a_2 \otimes \cdots \otimes
a_{n+1}$ for the result of the $n$-iterated application of $\D$ on
$a.$\end{definition}

\begin{example} Let $G$ be a group and $\Phi$ a normalized $3$-cocycle on $G$. It is well known that
the group algebra $kG$ is a Hopf algebra with $\D(g)=g\otimes g,\; \S(g)=g^{-1}$ and
$\e(g)=1$ for any $g\in G$. By extending $\Phi$ trilinearly, then $\Phi \colon (kG)^{\otimes 3} \to k$ becomes a
 convolution-invertible map. Define two linear functions
 $\alpha,\beta \colon kG \to k$ just by $\alpha(g):=\e(g)$ and $\beta(g):=\frac{1}{\Phi(g,g^{-1},g)}$
 for any $g\in G$. It is not hard to see  that $kG$ together with these $\Phi, \ \alpha$ and $\beta$
 becomes a Majid algebra. In the following, this resulting Majid algebra is denoted by $(kG,\Phi)$.
\end{example}

 A Majid algebra $H$ is said to be \emph{pointed}, if the underlying
coalgebra is so. Given a pointed Majid algebra $(H,\D, \e,
\M, \mu, \Phi,\S,\alpha,\beta),$ let $\{H_n\}_{n \ge 0}$ be its
coradical filtration, and $$\gr H = H_0 \oplus H_1/H_0 \oplus H_2/H_1
\oplus \cdots$$ the corresponding coradically graded coalgebra. Then naturally
$\gr H$ inherits from $H$ a graded Majid algebra structure. The
corresponding graded associator $\gr\Phi$ satisfies
$\gr\Phi(\bar{a},\bar{b},\bar{c})=0$ for all homogeneous
$\bar{a},\bar{b},\bar{c} \in \gr H$ unless they all lie in $H_0.$
Similar condition holds for $\gr\alpha$ and $\gr\beta.$ In
particular, $H_0$ is a Majid subalgebra and it turns out to be the
Majid algebra $(kG, \gr\Phi)$ for $G=G(H),$ the set of group-like
elements of $H.$ We call a pointed Majid algebra $H$
\emph{coradically graded} if $H \cong \gr H$ as Majid algebras. One can also see \cite{qha1} for more details on pointed Majid algebras.

\begin{definition} Let $(H,\D, \e, \M, \mu, \Phi,\S,\alpha,\beta)$ be a Majid algebra. A convolution-invertible linear map
$$J:\;H\otimes H\to k$$
is called a twisting (or gauge transformation) on $H$ if
$$J(h,1)=\e(h)=J(1,h)$$
for all $h\in H$.
\end{definition}

Now let $H$ be a Majid algebra together with a twisting $J$. Then one can construct
a new Majid algebra $H^{J}$. By definition, $H^{J}=H$ as a coalgebra and the multiplication $``\circ"$ on $H^{J}$ is given by
\begin{equation}
 a\circ b:=J(a_1,b_1)a_2b_2J^{-1}(a_3,b_3)
\end{equation}
for all $a,b\in H$. The associator $\Phi^{J}$ and the quasi-antipode $(\S^J,\alpha^J,\beta^{J})$ are given as:
$$\Phi^J(a,b,c)=J(b_1,c_1)J(a_1,b_2c_2)\Phi(a_2,b_3,c_3)J^{-1}(a_3b_4,c_4)J^{-1}(a_4,b_5),$$
$$\S^J=\S,\;\;\;\;\alpha^J(a)=J^{-1}(\S(a_1),a_3)\alpha(a_2),\;\;\;\;\beta^J(a)=J(a_1,\S(a_3))\beta(a_2)$$
for all $a,b,c \in H$.

\begin{definition} Two Majid algebras $H_1$ and $H_2$ are called \emph{twist equivalent} if there is a twisting $J$ on $H_1$ such that we have a Majid algebra isomorphism
$$H_1^{J}\cong H_2.$$ Denote $H_1 \sim H_2$ if $H_1$ is twist equivalent to $H_2$. We call a Majid algebra $H$ \emph{genuine} if it is not twist equivalent to a Hopf algebra.
\end{definition}

\subsection{Quiver Setting for pointed Majid algebras and ranks}

A quiver is a quadruple $Q=(Q_0,Q_1,s,t),$ where $Q_0$ is the set of
vertices, $Q_1$ is the set of arrows, and $s,t:\ Q_1 \longrightarrow
Q_0$ are two maps assigning respectively the source and the target
for each arrow. A path of length $l \ge 1$ in the quiver $Q$ is a
finitely ordered sequence of $l$ arrows $a_l \cdots a_1$ such that
$s(a_{i+1})=t(a_i)$ for $1 \le i \le l-1.$ By convention a vertex is
called a trivial path of length $0.$

For a quiver $Q,$ the associated path coalgebra $k Q$ is the
$k$-space spanned by the set of paths with counit and
comultiplication maps defined by $\e(g)=1, \ \D(g)=g \otimes g$ for
each $g \in Q_0,$ and for each nontrivial path $p=a_n \cdots a_1, \
\e(p)=0,$
\begin{equation}
\D(a_n \cdots a_1)=p \otimes s(a_1) + \sum_{i=1}^{n-1}a_n \cdots
a_{i+1} \otimes a_i \cdots a_1 \nonumber + t(a_n) \otimes p \ .
\end{equation}
The length of paths provides a natural gradation to the path coalgebra.
Let $Q_n$ denote the set of paths of length $n$ in $Q,$ then $k
Q=\bigoplus_{n \ge 0} k Q_n$ and $\D(k Q_n) \subseteq
\bigoplus_{n=i+j}k Q_i \otimes k Q_j.$ Clearly $k Q$ is pointed with
the set of group-likes $G(k Q)=Q_0,$ and has the following
coradical filtration $$ k Q_0 \subseteq k Q_0 \oplus k Q_1
\subseteq k Q_0 \oplus k Q_1 \oplus k Q_2 \subseteq \cdots.$$
Thus $k Q$ is coradically graded. The path coalgebras of quivers can be
presented as cotensor coalgebras, so they are cofree in the category
of pointed coalgebras and enjoy a universal mapping property.

A quiver $Q$ is said to be a Hopf quiver if
the corresponding path coalgebra $k Q$ admits a graded Hopf algebra
structure. Hopf quivers can be determined by ramification data of
groups. Let $G$ be a group and denote its set of conjugacy classes
by $\C.$ A ramification datum $R$ of the group $G$ is a formal sum
$\sum_{C \in \C}R_CC$ of conjugacy classes with coefficients in
$\mathbb{N}=\{0,1,2,\cdots\}.$ The corresponding Hopf quiver
$Q=Q(G,R)$ is defined as follows: the set of vertices $Q_0$ is $G,$
and for each $x \in G$ and $c \in C,$ there are $R_C$ arrows going
from $x$ to $cx.$ It is clear by definition that $Q(G,R)$ is
connected if and only if the the set $\{c \in C | C \in \C \
\text{with} \ R_C \ne 0\}$ generates the group $G.$ For a given Hopf
quiver $Q,$ the set of graded Hopf structures on $k Q$ is in
one-to-one correspondence with the set of $k Q_0$-Hopf bimodule
structures on $k Q_1.$

It is shown in \cite{qha1} that the path coalgebra $k Q$ admits a
graded Majid algebra structure if and only if the quiver $Q$ is a
Hopf quiver. Moreover, for a given Hopf quiver $Q=Q(G,R),$ if we fix
a Majid algebra structure on $k Q_0=(k G,\Phi)$ with
quasi-antipode $(\S,\alpha,\beta),$ then the set of graded Majid
algebra structures on $k Q$ with $k Q_0=(k
G,\Phi,\S,\alpha,\beta)$ is in one-to-one correspondence with the
set of $(k G,\Phi)$-Majid bimodule structures on $k Q_1.$
Thanks to the Gabriel type theorem given in \cite{qha1}, for an arbitrary
pointed Majid algebra $H,$ its graded version $\gr H$ can be
realized uniquely as a large Majid subalgebra of some graded Majid
algebra structure on a Hopf quiver.  By ``large" it is meant that the
Majid subalgebra contains the set of vertices and arrows of the Hopf
quiver.  We denote this unique quiver by $Q(H)$
and call it the \emph{Gabriel
quiver} of $H$. Therefore, in principle all pointed Majid algebras are able
to be constructed on Hopf quivers.

\begin{definition} Let $H$ be a pointed Majid algebra, $Q(H)$ be its Gabriel quiver and
$R=\sum_{C \in \C}R_CC$ be the ramification datum of $Q(H)$. The \emph{rank} of $H$ is defined to be the natural
number $\sum_{C \in \C}R_C |C|$ where $|C|$ is the cardinality of $C.$ We say that $H$ is \emph{connected} if $Q(H)$ is connected as a graph.
\end{definition}

\begin{example}\label{e2.7} \emph{Pointed Majid algebras of rank one were studied in \cite{qha3}. We recall them here in detail as they shed much light on the working philosophy of the present paper. }

\emph{Consider the following Hopf quiver $Q(\mathbb{Z}_{n},g)$:}
$$ \xy {\ar (0,0)*{1}; (30,-10)*{g}}; {\ar (-30,-10)*{g^{n-1}};
(0,0)*{\unit}}; {\ar (30,-10)*{g}; (3,-10)*{ \cdots  \ }}; {\ar
(-3,-10)*{\ \cdots  }; (-30,-10)*{g^{n-1}}}
\endxy $$
\emph{Now let $0\leq s\leq n-1$ be a natural number,  $q$ an $n^{2}$-th primitive root of unity and $\q:=q^{n}$.
Let $p_{i}^{l}$ denote the path in $Q(\mathbb{Z}_{n},g)$ starting from $g^{i}$ with length $l$. So $p_{i}^{0}=g^{i}$.
Let $\Phi_{s}$ be the 3-cocycle on $\mathbb{Z}_{n}$ defined by
\begin{equation}\Phi_{s}(g^{i},g^{j},g^{k})=\q^{si[\frac{j+k}{n}]},\;\;\;0\leq i,j,k\leq  n-1.
\end{equation} Here $[x]$ stands for the integral part of $x.$
 For any $h \in k$,
define $l_h=1+h+\cdots +h^{l-1}$ and $l!_h=1_h
\cdots l_h$. The Gaussian binomial coefficient is defined by
$\binom{l+m}{l}_h:=\frac{(l+m)!_h}{l!_h m!_h}$. Let $(a,b)$ denote the greatest common divisor of the two natural numbers $a,b$.}

\emph{Now we are ready to define the rank 1 pointed Majid algebra $M(n,s,q)$.
 As a coalgebra, $$M(n,s,q)=\bigoplus_{i< \frac{n^{2}}{(n^{2},s)}} k Q(\mathbb{Z}_{n},g)_{i}.$$
The associator, the multiplication, the functions $\alpha,\beta$ and the antipode are given as follows:
\begin{eqnarray}
&\Phi(p_i^{l},p_j^{m}, p_k^{t})=\delta_{l+m+t,0}\Phi_{s}(g^{i},g^{j},g^{k}),\\
&\label{eq2.10}p_i^{l}\cdot p_j^{m}=\mathbbm{q}^{-sjl}q^{-sjl}\mathbbm{q}^{s(i+l')[m+j-(m+j)']/n}
\binom{l+m}{l}_{\mathbbm{q}^{-s}q^{-s}}p_{i+j}^{l+m},\\
&\alpha(p_i^{l})=\delta_{l,0}1,\;\;\;\;\beta(p_i^{l})=\delta_{l,0}\frac{1}{\Phi_{s}(g^{i},g^{n-i},g^{i})},\\
&\S(g^{i})=g^{n-i},\;\;\;\;\S(p_{0}^{1})=\q^{-s}p_{n-1}^{1},\end{eqnarray}
for $0\leq l,m,t<\frac{n^{2}}{(n^{2},s)}$ and $0\leq i,j,k\leq n-1$, where $\delta_{a,b}$ is the Kronecker
delta, namely it is equal to $1$ if $a=b$ and $0$ otherwise, and $l'$ means the remainder of $l$ divided by $n$.}
\end{example}

We have the following basic observation.

\begin{lemma}\label{l2.8} Let $H$ be a connected pointed Majid algebra of rank $2$ and $Q(H)=Q(G,R)$ be its Gabriel
quiver, then $G$ is an abelian group which can be generated by one or two elements.
\end{lemma}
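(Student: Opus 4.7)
My plan is to enumerate the ramification data compatible with rank two and then dispose of each resulting case. By the definition of rank, we must have $\sum_{C \in \C} R_C |C| = 2$, and this Diophantine constraint admits exactly three solutions: (a) a single singleton class $C=\{c\}$ carries $R_C=2$; (b) a single class $C=\{c_1,c_2\}$ of size two carries $R_C=1$; or (c) two distinct singleton classes $\{c_1\}, \{c_2\}$ each carry multiplicity one. Connectedness of $Q(H)$ forces the supports of the nonzero ramification data to generate $G$.

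Cases (a) and (c) are immediate: a singleton conjugacy class consists of a central element, so in (a) the group $G$ is the cyclic group generated by $c$, while in (c) the group $G$ is generated by the two central elements $c_1, c_2$, which obviously commute. In both situations $G$ is abelian and at most two-generated, as required.

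The main obstacle is ruling out case (b), and this is the step that requires a genuine argument. The idea I would use is a centralizer comparison. Since $|C|=2$, the centralizer $Z_G(c_1)$ has index two in $G$ and is therefore a normal subgroup of $G$. Writing $c_2 = g c_1 g^{-1}$ for some $g \in G$, normality gives $Z_G(c_2) = g Z_G(c_1) g^{-1} = Z_G(c_1)$, and in particular $c_2 \in Z_G(c_2) = Z_G(c_1)$. Combined with the trivial $c_1 \in Z_G(c_1)$, this forces $\langle c_1, c_2 \rangle \subseteq Z_G(c_1) \subsetneq G$, contradicting the connectedness of $Q(H)$. Thus case (b) cannot occur and the lemma follows from the analysis of (a) and (c).
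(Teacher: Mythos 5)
Your proof is correct, and it follows the same overall case decomposition as the paper's (the three ways the ramification datum can satisfy $\sum_C R_C|C|=2$ correspond exactly to the paper's split on whether the two arrows out of $1$ end in the same or in different conjugacy classes). The only real divergence is in the one nontrivial case, a single conjugacy class $C=\{c_1,c_2\}$ of size two: the paper stays entirely inside the Hopf-quiver formalism, noting that $c_1c_2c_1^{-1}$ must again lie in $C$, so either $c_1=c_2$ or $c_1c_2=c_2c_1$, and in either case the two generators commute, whence $G$ is abelian. You instead rule the case out altogether via the observation that $Z_G(c_1)$ has index $2$, hence is normal, hence equals $Z_G(c_2)$ and contains both $c_1$ and $c_2$, contradicting the fact that connectedness forces $\langle c_1,c_2\rangle=G$. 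Both arguments are valid and elementary; yours proves the slightly stronger statement that a size-two class simply cannot occur (which is also implicit in the paper's version, since two commuting generators make $G$ abelian and hence all classes singletons), while the paper's is marginally more self-contained in that it only invokes closure of a conjugacy class under conjugation rather than the normality of index-two subgroups.
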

\begin{proof} Let $1$ denote the unit element of $G$. By definition, we know that in $Q(H)$ there are exactly two arrows
going out from $1$. We denote the ending vertices of these two arrows by $g$ and $h$ respectively. As the graph is connected,
it follows that $G$ can be generated by $g$ and $h$. If $g$ and $h$ live in different conjugacy classes, then the definition
of Hopf quivers implies that the conjugacy class containing $g$ (resp. $h$) is just $g$ (resp. $h$). So both $g$ and $h$ lie in the
center of $G$ and thus $G$ is abelian. If $g,h$ live in the same conjugacy class, then again by the definition of Hopf quivers we have
$$ghg^{-1}=g,\;\;\;\;\;\textrm{or}\;\;\;\;\;\;ghg^{-1}=h.$$
This implies that $g=h$ or $gh=hg$. In either case $G$ is abelian.
\end{proof}

\subsection{Yetter-Drinfeld modules over $(kG,\Phi)$.}
The definition of Yetter-Drinfeld modules over a general Majid algebra seems cumbersome. However,
its formulation becomes much simpler when this Majid algebra is $(kG,\Phi)$ with $G$ an \emph{abelian} group. This special case already suffices for our purpose.

Assume that $V$ is a left $kG$-comudule with comodule structure map $\delta_{L}:\; V\to kG\otimes V$.
Define $^{g}V:=\{v\in V|\delta_{L}(v)=g\otimes v\}$ and thus $V=\bigoplus_{g\in G}\;^{g}V.$
For the $3$-cocycle $\Phi$ on $G$ and any $g\in G$, define
\begin{equation} \widetilde{\Phi}_g:\;G\times G\to k^{\ast}, \quad (e,f)\mapsto \frac{\Phi(g,e,f)\Phi(e,f,g)}{\Phi(e,g,f)}.
\end{equation}
Direct computation shows that
$$\widetilde{\Phi}_g\in \mathbb{Z}^{2}(G,k^{\ast}).$$

\begin{definition}\label{d2.9} A left $kG$-comudule $V$ is a \emph{left-left Yetter-Drinfeld module}
over $(kG,\Phi)$ if
each $^{g}V$ is a projective
$G$-representation with respect to the $2$-cocycle $\widetilde{\Phi}_g,$ namely the $G$-action $\triangleright$ on ${^{g}V}$ satisfies
\begin{equation}
e\triangleright(f\triangleright v)=\widetilde{\Phi}_g(e,f) (ef)\triangleright v,\;\;\;\; \forall e,f\in G,\;v\in \;^{g}V.
\end{equation}
\end{definition}

The category of all left-left Yetter-Drinfeld modules is denoted by $_{ G}^{ G}\mathcal{YD}^{\Phi}$.
Similarly, one can define left-right, right-left and right-right Yetter-Drinfeld modules over $(kG,\Phi)$.
As the familiar Hopf case, $_{ G}^{ G}\mathcal{YD}^{\Phi}$ is a braided tensor category. More precisely, for any  $M, N\in \;_{ G}^{ G}\mathcal{YD}^{\Phi},$ the structure maps of $M\otimes N$ as a left-left Yetter-Drinfeld module are given by
\begin{equation}\delta_{L}(m_{g}\otimes n_{h}):=gh \otimes m_{g} \otimes n_{h},\;\;x\triangleright (m_{g}\otimes n_{h}):=\widetilde{\Phi}_x(g,h)x\triangleright m_{g}
\otimes x\triangleright n_{h}\end{equation}
for all $x,g,h\in G$ and $m_{g} \in {^{g}M},\;n_{h}\in {^{h}N}$.
The associativity constraint $a$ and the braiding $c$ of $_{ G}^{ G}\mathcal{YD}^{\Phi}$ are given respectively by
\begin{eqnarray}
&\ a((u_{e}\otimes v_{f})\otimes w_{g}) =\Phi(e,f,g)^{-1} u_{e}\otimes (v_{f}\otimes w_{g})\\
&c(u_{e}\otimes v_{f})=e \triangleright v_{f}\otimes u_{e}
\end{eqnarray}
for all $e,f,g\in G$, $u_{e}\in {^{e}U},\  v_{f}\in {^{f}V},\  w_{g} \in {^{g}W}$ and $U,V,W\in\; _{ G}^{ G}\mathcal{YD}^{\Phi}$.

\begin{remark} A left-left Yetter-Drinfeld module $V$ over $(kG,\Phi)$ is called \emph{diagonal} if every projective $G$-representation
$^{g}V$ is a direct sum of $1$-dimensional projective representations. We point out that not like the Hopf case, here the condition of $G$ being abelian can \emph{NOT} guarantee
that every $V$ is diagonal. It turns out that all $V \in {_{ G}^{ G}\mathcal{YD}^{\Phi}}$ are diagonal if and only if $\Phi$ is an abelian cocycle, see \cite{EM} and \cite{MN}. We will show in Section 3 that all $3$-cocycles
on $\mathbb{Z}_{m}\times \mathbb{Z}_{n}$ are abelian.
\end{remark}

\subsection{Hopf algebras in braided tensor categories}
Let $\mathcal{C}=(\mathcal{C},\otimes, \1, a, l, r, c)$ be a braided tensor category, where $\1$ is the unit object, $a$ ($l,$ or $r$) is the associativity (left, or right unit) constraint and $c$ is the braiding. An associative algebra in $\C$ is an object $A$ of $\mathcal{C}$ endowed with a multiplication morphism $m: A\otimes A\To A$ and a unit morphism $u: \1 \To A$ such that \[ m\circ(m\otimes \id)=m\circ(\id\otimes m)\circ a_{A,A,A},  \quad m\circ(\id \otimes u)=r_A,  \quad m\circ(u \otimes \id)=l_A. \]
If $(A,m_A,u_A)$ and $(B,m_B,u_B)$ are two algebras in $\mathcal{C},$ then one can define a natural morphism $m_{A\otimes B}: (A\otimes B)\otimes (A\otimes B)\To A\otimes B$ by
\begin{equation*}
m_{A\otimes B}=(m_A\otimes m_B)\circ a_{A\otimes A,B,B}\circ(a_{A,A,B}^{-1}\otimes \id)
\circ(\id\otimes c_{B,A}\otimes \id)\circ(a_{A,B,A}\otimes \id)\circ a_{A\otimes B,A,B}^{-1}
\end{equation*}
such that $(A\otimes B,m_{A\otimes B},u_A\otimes e_B)$ is again an algebra in $\mathcal{C}.$ The resulting algebra is called the braided tensor product of $A$ and $B.$

Dually, a coassociative coalgebra in $\C$ is an object $C$ of $\C$ endowed with a comultiplication  morphism $\Delta: C\To C\otimes C$ and a counit morphism $\varepsilon: C \To \1$ such that \[ a_{C,C,C}\circ(\Delta\otimes \id) \circ \Delta=(\id\otimes \Delta)\circ \Delta, \quad  r_C^{-1}= (\id \otimes \varepsilon)\circ\Delta, \quad l_C^{-1}=(\varepsilon\otimes \id)\circ\Delta. \] If $(C, \Delta_C, \varepsilon_C)$ and $(D, \Delta_D, \varepsilon_D)$ are coalgebras in $\C,$ then one can define a suitable morphism $\Delta_{C \otimes D}: C \otimes D \To (C \otimes D) \otimes (C \otimes D)$ to get the braided tensor product coalgebra $(C \otimes D, \Delta_{C \otimes D}, \varepsilon_C \otimes \varepsilon_D)$ in $\C.$

Endowed with braided tensor products, one may naturally define Hopf algebras in braided tensor categories.  A sextuplet $(H,m,u,\Delta,\varepsilon,S)$ is a  Hopf algebra in $\mathcal{C}$ if $(H,m,u)$ is an algebra in $\mathcal{C},$  $(H,\Delta,\varepsilon)$ is a coalgebra in $\mathcal{C}$ and $\Delta:H \To H\otimes H$ and $\varepsilon: H \To \1$ are algebra maps in $\C,$ and $\S: H \To H$ is a morphism, to be called the antipode, subject to
\begin{equation}
m \circ (\S \otimes \id) \circ \Delta = u \circ \varepsilon = m \circ (\id \otimes \S) \circ \Delta.
\end{equation}
As the usual case, we can define ideals of algebras, coideals of coalgebras, Hopf ideals of Hopf algebras, and the corresponding quotient structures in braided tensor categories. We do not include further details. For our purpose, we only record some more facts on the duals of finite-dimensional Hopf algebras in the braided tensor category ${^G_G\mathcal{YD}^{\Phi}}$. Let $H$ be such a Hopf algebra. Then its left dual $^*H$ and right dual $H^*$ are again Hopf algebras in ${^G_G\mathcal{YD}^{\Phi}}$ in a natural manner, and in addition, $^*(H^*)\cong (^*H)^*\cong H.$ For more details on algebras and Hopf algebras in braided tensor categories, the reader is referred to \cite{majid}.

\subsection{Nichols algebras in $^{ G}_{ G}\mathcal{Y}\mathcal{D}^\Phi$}
In simple terms, Nichols algebras are the analogue of the usual symmetric algebras in more general braided tensor categories. They can be defined by various equivalent ways, see \cite{as1, as2}. Here we adopt the defining method in terms of the universal property.

Let $V$ be a nonzero object in $^{ G}_{ G}\mathcal{Y}\mathcal{D}^\Phi.$ By $T_{\Phi}(V)$ we denote the tensor algebra in $^{ G}_{ G}\mathcal{Y}\mathcal{D}^\Phi$ generated freely by $V.$ It is clear that $T_{\Phi}(V)$ is isomorphic to $\bigoplus_{n \geq 0}V^{\otimes \overrightarrow{n}}$ as a linear space, where $V^{\otimes \overrightarrow{n}}$ means
$$\underbrace{(\cdots((}_{n-1}V\otimes V)\otimes V)\cdots \otimes V).$$ This induces a natural $\mathbb{N}$-graded structure on $T_{\Phi}(V).$ Define a comultiplication on $T_{\Phi}(V)$ by $\Delta(X)=X\otimes 1+1\otimes X, \ \forall X \in V,$ a counit by $\varepsilon(X)=0,$ and an antipode by $S(X)=-X.$ It is routine to verify that these provide an $\N$-graded Hopf algebra structure on $T_{\Phi}(V)$ in the braided tensor category $^{ G}_{ G}\mathcal{Y}\mathcal{D}^\Phi.$

\begin{definition}
The Nichols algebra $\mathscr{B}(V)$ of $V$ is defined to be the quotient Hopf algebra $T_{\Phi}(V)/I$ in $^{ G}_{ G}\mathcal{Y}\mathcal{D}^\Phi,$ where $I$ is the unique maximal graded Hopf ideal generated by homogeneous elements of degree greater than or equal to $2.$
\end{definition}

Let $J$ be a $2$-cochain on $G,$ i.e., a function $J \colon G \times G \To k^*$ such that $J(g,1)=1=J(1,g)$ for all $g \in G.$ Then clearly the group Majid algebras $(kG, \Phi)$ and $(kG, \Phi \partial(J))$ are twist equivalent with twisting offered by extending $J$ bilinearly. It is well known that their associated Yetter-Drinfeld categories $^{ G}_{ G}\mathcal{Y}\mathcal{D}^\Phi$ and $^{ G}_{ G}\mathcal{Y}\mathcal{D}^{\Phi \partial(J)}$ are tensor equivalent \cite{DPR}. More precisely, the tensor functor $(\mathcal{F}^J, \varphi_0, \varphi_2) \colon {^{ G}_{ G}\mathcal{Y}\mathcal{D}^\Phi} \To {^{ G}_{ G}\mathcal{Y}\mathcal{D}^{\Phi \partial(J)}}$ is given by \[ \mathcal{F}^J(U)=U^J, \quad \varphi_0=\id, \quad \varphi_2 \colon V^J \otimes W^J \To (V \otimes W)^J, \ X \otimes Y \mapsto J(x, y) X \otimes Y \] where $U^J$ as a $G$-comodule is the same as $U,$ but with a new $G$-action obtained by twisting that on $U$ via $J$ as follows
\begin{equation}\label{ta}
g \triangleright^J X=\frac{J(g,x)}{J(x,g)}g \triangleright X, \quad \forall g \in G, \ X \in {^xU}.
\end{equation}
Naturally, this tensor equivalence maps algebras in $^{ G}_{ G}\mathcal{Y}\mathcal{D}^\Phi$ to algebras in $^{ G}_{ G}\mathcal{Y}\mathcal{D}^{\Phi \partial(J)}.$ In particular, the Nichols algebra $\mathscr{B}(V)$ is mapped to $\mathscr{B}(V)^J$ which is again a Nichols algebra in $^{ G}_{ G}\mathcal{Y}\mathcal{D}^{\Phi \partial(J)}.$ Note that the multiplication of the latter, denoted by ``$\circ$", is given by
\begin{equation}\label{tm}
X \circ Y = J(x,y)XY, \quad \forall X \in {^xV}, \ Y \in {^yV}.
\end{equation}
In addition, we have the following obvious but useful observation:

\begin{lemma}\label{tn}
$\mathscr{B}(V)^J \cong \mathscr{B}(V^J)$ as Nichols algebras in $^{ G}_{ G}\mathcal{YD}^{\Phi \partial(J)}.$
\end{lemma}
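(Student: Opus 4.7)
The plan is to use that $(\mathcal{F}^J, \varphi_0, \varphi_2)$ is a braided tensor equivalence and therefore preserves every categorical construction used to define a Nichols algebra: free tensor algebras generated by a single object, $\N$-gradings by degree, Hopf ideals, and the property of a Hopf ideal being generated in a given range of degrees.

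First I would observe that $\mathcal{F}^J$ identifies the graded Hopf algebras $T_\Phi(V)$ and $T_{\Phi \partial(J)}(V^J)$. By iterating $\varphi_2$ one obtains natural isomorphisms $\mathcal{F}^J(V^{\otimes \vec n}) \cong (V^J)^{\otimes \vec n}$, hence an $\N$-graded isomorphism $\mathcal{F}^J(T_\Phi(V)) \cong T_{\Phi \partial(J)}(V^J)$ in ${^G_G\mathcal{YD}^{\Phi \partial(J)}}$. The multiplication on the left-hand side is given by formula \eqref{tm}, and for $X \in V = {}^xV$ the twisted comultiplication satisfies $\Delta^J(X) = X \otimes 1 + 1 \otimes X$, because the normalisation $J(g,1) = J(1,g) = 1$ forces $\varphi_2^{-1}$ to act trivially on tensors involving the unit. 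This is exactly the structure defining $T_{\Phi \partial(J)}(V^J)$.

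Next I would push the presentation $\mathscr{B}(V) = T_\Phi(V)/I$ through $\mathcal{F}^J$. Since $\mathcal{F}^J$ is a category equivalence preserving the $\N$-grading (the underlying $G$-graded comodule is unchanged by twisting, so the degree of each homogeneous element is preserved), it induces a bijection between graded Hopf ideals of $T_\Phi(V)$ generated in degrees $\geq 2$ and graded Hopf ideals of $T_{\Phi \partial(J)}(V^J)$ with the same property. In particular it sends the maximal such ideal $I$ to the maximal such ideal of $T_{\Phi \partial(J)}(V^J)$, which by definition cuts out $\mathscr{B}(V^J)$. Therefore $\mathscr{B}(V)^J = \mathcal{F}^J(T_\Phi(V))/\mathcal{F}^J(I) \cong \mathscr{B}(V^J)$.

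The main obstacle is the first step --- carefully identifying $\mathcal{F}^J(T_\Phi(V))$ with $T_{\Phi \partial(J)}(V^J)$. Once this is granted, the rest is formal: equivalences of tensor categories automatically transport Hopf ideals and preserve maximality, so no further computation is needed. Everything reduces to the normalisation identities of the $2$-cochain $J$ and the explicit shape of the structure morphism $\varphi_2$.
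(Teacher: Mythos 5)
Your argument is correct and is exactly the elaboration of what the paper treats as an ``obvious but useful observation'': the lemma is stated there without proof, the intended justification being precisely the categorical transport you describe, namely that the tensor equivalence $\mathcal{F}^J$ carries the free braided Hopf algebra on $V$ to that on $V^J$ (with primitivity of degree-one elements preserved because $J$ is normalized) and induces a degree-preserving bijection of graded Hopf ideals, hence matches the maximal ones. Nothing further is needed; your write-up supplies the details the authors omitted.
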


\subsection{Bosonization for pointed Majid algebras}
The theory of bosonization in a broad context can be found in \cite{majid} in terms of braided diagrams. For our purpose, it is enough to focus on the situation of graded pointed Majid algebras. For the sake of completeness and later applications, we record in the following some explicit concepts, notations and results without proof.

In the rest of the paper, we always assume that
$$\MM=\bigoplus_{i \in \N} \MM_{i}$$ is a connected coradically graded pointed Majid algebra with unit $1.$ So $\MM_0=(kG,\Phi)$ for some
group $G$ together with a $3$-cocycle $\Phi$ on $G$.
 Let $\pi:\; \MM\to \MM_{0}$ be
the canonical projection. Then $\MM$ is a $kG$-bicomodule naturally through
$$\delta_{L}:=(\pi\otimes \id)\D,\;\;\;\;\delta_{R}:=(\id\otimes \pi)\D.$$
Thus there is a $G$-bigrading on $\MM$, that is,
$$\MM=\bigoplus_{g,h\in G}\;^{g}\MM^{h}$$
where $^{g}\MM^{h}=\{m\in \MM \mid \delta_{L}(m)=g\otimes m,\;\delta_{R}(m)=m\otimes h\}$. We only deal with
homogeneous elements with respect to this $G$-bigrading unless stated otherwise.
For example, whenever we write $\D(X)=X_1\otimes X_2,$ all $X,X_1,X_2$ are assumed homogeneous. For the convenience of the exposition, we make a convention: given any capital $X \in {^{g}\MM^{h}}$, use its lowercase $x$ to denote $gh^{-1}.$

Define the subalgebra of $\MM$ consisting of coinvariants as
$$\R:=\{m\in \MM \mid \delta_R(m)=m\otimes 1\}.$$ Clearly $1 \in \R$ and $\R$ inherits from $\MM$ a left $G$-coaction, i.e., $\R=\bigoplus_{g \in G} {^g\R}.$
There is also a $(kG,\Phi)$-action on $\R$ given by
\begin{equation}\label{eq4.1} f\triangleright X:=\frac{\Phi(fx,f^{-1},f)}{\Phi(f,f^{-1},f)}(f\cdot X)\cdot f^{-1} \end{equation}
for all $f,x\in G$ and $X\in {^{x}\R}$. Here $\cdot$ is the multiplication in $\MM$. Then $(\R,\delta_{L},\rhd)$ is a left-left Yetter-Drinfeld module over $(kG,\Phi)$.

Moreover, there are several natural operations on $\R$ inherited from $\MM$ as follows:
\begin{eqnarray*}&\M:\;\R\otimes \R\to \R, \;\;\;\;\;(X,Y)\mapsto XY:=X\cdot Y;\\
&u \colon k \to \R, \quad \lambda \mapsto \lambda 1; \\
&\D_{\R}:\;\R\to \R\otimes \R,\;\;\;\;X\mapsto \Phi(x_1,x_2,x_2^{-1})X_{1}\cdot x_{2}^{-1}\otimes X_{2};\\
&\e_{\R}:\;\R\to k,\;\;\;\;\e_{\R}:=\e|_{\R};\\
&\S_{\R}:\;\R\to \R,\;\;\;\;X\mapsto \frac{1}{\Phi(x,x^{-1},x)}x\cdot \S(X).\end{eqnarray*}
Then it is routine to verify that $(\R, \M, u, \D_{\R}, \e_{\R}, \S_{\R})$ is a Hopf algebra in $^{G}_{G}\mathcal{YD}^{\Phi}$.

Conversely, let $H$ be a Hopf algebra in $_{G}^{G}\mathcal{YD}^{\Phi}.$ Since $H$ is a left $G$-comodule, there is a $G$-grading on $H$:
 $$H=\bigoplus_{x\in G} {^{x}H}$$
 where $^{x}H=\{X\in H|\delta_{L}(X)=x\otimes X\}.$ As before, we only need to deal with $G$-homogeneous elements.
 As a convention, homogeneous elements in $H$ are denoted by capital letters, say $X, Y, Z, \dots,$ and the associated degrees are denoted by their lower cases, say $x, y, z, \dots.$

 For our purpose, we also assume  that $H$ is $\mathbb{N}$-graded with $H_0=k$. If $X\in H_{n}$, then we say that $X$ has length $n$. Moreover, we assume that both gradings are compatible in the sense that \[ H=\bigoplus_{g\in G} {^gH} =\bigoplus_{g\in G} \bigoplus_{n\in \mathbb{N}} {^{g}H_{n}}. \] For example, the Hopf algebra $\R$ in $_{G}^{G}\mathcal{YD}^{\Phi}$ considered above satisfies these assumptions as $\R=\bigoplus_{i\in \mathbb{N}}\R_{i}$ is coradically graded. For any $X\in H$, we write its comultiplication as
 $$\D_{H}(X)=X_{(1)}\otimes X_{(2)}.$$
\begin{proposition}
Keep the assumptions on $H$ as above. Define on $H\otimes kG$ a product by
\begin{equation}
(X\otimes g)(Y\otimes h)=\frac{\Phi(xg,y,h)\Phi(x,y,g)}{\Phi(x,g,y)\Phi(xy,g,h)}X(g\triangleright Y)\otimes gh,
\end{equation}
and a coproduct  by
\begin{equation}
\D(X\otimes g)=\Phi(x_{(1)},x_{(2)},g)^{-1}(X_{(1)}\otimes x_{(2)}g)\otimes (X_{(2)}\otimes g).
\end{equation}
Then $H\otimes k G$ becomes a graded Majid algebra with a quasi-antipode $(\S,\alpha,\beta)$ given by
\begin{eqnarray}
&\S(X\otimes g)=\frac{\Phi(g^{-1},g,g^{-1})}{\Phi(x^{-1}g^{-1},xg,g^{-1})\Phi(x,g,g^{-1})}(1\otimes x^{-1}g^{-1})(\S_H(X)\otimes 1), \\
&\alpha(1\otimes g)=1,\ \ \ \alpha(X\otimes g)=0,  \\
&\beta(1\otimes g)=\Phi(g,g^{-1},g)^{-1},\ \ \beta(X\otimes g)=0,
\end{eqnarray}
here $g,h\in G$ and $X,Y$ are homogeneous elements of length  $\geq 1.$
\end{proposition}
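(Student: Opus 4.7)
The plan is to view the proposition as the quasi-Hopf analogue of Radford--Majid bosonization, adapted to the braided tensor category $^{G}_{G}\mathcal{YD}^{\Phi}$. The structure maps on $H\otimes kG$ are essentially forced by three requirements: the inclusion $g\mapsto 1\otimes g$ must embed $(kG,\Phi)$ as a Majid subalgebra; the projection $X\otimes g\mapsto\e_H(X)g$ must be a Majid algebra homomorphism; and the right $kG$-coinvariants must recover $H$. These requirements, combined with the formulas in subsection 2.6 that extract $(\R,\M,u,\D_{\R},\e_{\R},\S_{\R})$ from a coradically graded pointed Majid algebra $\MM$, determine the stated product, coproduct, associator and quasi-antipode up to $\Phi$-adjustments that are transparent on $H\otimes 1$ and $1\otimes kG$ and are the only ones compatible with the $G$-bigrading. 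Thus the task reduces to verifying each Majid algebra axiom on homogeneous elements.

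First I would verify that $(H\otimes kG,\D,\e)$ is a coalgebra. Counitality is immediate. For coassociativity, iterating $\D$ on $X\otimes g$ yields, on one side, the factor $\Phi(x_{(1)},x_{(2)},g)^{-1}\Phi(x_{(1)(1)},x_{(1)(2)},x_{(2)}g)^{-1}$ and on the other side $\Phi(x_{(1)},x_{(2)},g)^{-1}\Phi(x_{(2)(1)},x_{(2)(2)},g)^{-1}$ together with a further $\Phi$-factor coming from the quasi-coassociativity of $\D_H$ inside $^{G}_{G}\mathcal{YD}^{\Phi}$ (where $\D_H$ is coassociative only up to the associativity constraint $a$). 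The $3$-cocycle identity on $\Phi$ assembles both sides into the same expression. Next I would show that $\D$ is multiplicative. This reduces to the statement that $\D_H$ is an algebra morphism in $^{G}_{G}\mathcal{YD}^{\Phi}$, where the braided tensor product on $H\otimes H$ uses the braiding $c(X\otimes Y)=x\triangleright Y\otimes X$; the $\Phi$-factors in the stated product and coproduct of $H\otimes kG$ are precisely those needed to match the $\Phi$-factors produced by the braided tensor product and by the associativity constraint $a$.

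Then I would check quasi-associativity $a(bc)\Phi(a_2,b_2,c_2)=\Phi(a_1,b_1,c_1)(ab)c$ on pure tensors $X\otimes g$, $Y\otimes h$, $Z\otimes l$. Expanding both sides via the given product produces several $\Phi$-values and terms of the form $X(g\triangleright Y)(gh\triangleright Z)$; equality follows from repeated application of the $3$-cocycle identity combined with the twisted action axiom $e\triangleright(f\triangleright V)=\widetilde{\Phi}_{v}(e,f)\,(ef)\triangleright V$ for $V\in{^{v}H}$. The unit axiom for the product and the pentagon axiom on $\Phi$ are immediate from normalization and from the cocycle identity. Finally, the quasi-antipode axioms (2.5)--(2.6) are verified by splitting $X\otimes g$ into the cases of length zero and length $\ge 1$: on $1\otimes kG$ the formulas reduce to the already known quasi-antipode of $(kG,\Phi)$, while on homogeneous elements of length $\ge 1$ they follow from the braided antipode identity $m_H\circ(\S_H\otimes\id)\circ\D_H=u_H\circ\e_H=m_H\circ(\id\otimes\S_H)\circ\D_H$ together with $\alpha(X\otimes g)=0=\beta(X\otimes g)$.

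The main obstacle is the sheer bookkeeping of $\Phi$-factors in the multiplicativity of $\D$ and in quasi-associativity: each identity involves six or seven cocycle evaluations at interlocked arguments, and matching them demands careful, repeated use of the $3$-cocycle condition. A more conceptual route that avoids most of this calculation is to realize $H\otimes kG$ as the cross product of the braided Hopf algebra $H$ with the group Majid algebra $(kG,\Phi)$ inside $^{G}_{G}\mathcal{YD}^{\Phi}$; the $\Phi$-factors then arise automatically from transporting structures across the associativity constraint of the ambient category, in the spirit of Majid's diagrammatic bosonization \cite{majid}, and consistency is ensured by the coherence of the braided tensor structure.
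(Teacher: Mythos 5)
The paper states this proposition explicitly \emph{without proof}, deferring to the general theory of bosonization in braided tensor categories from \cite{majid}, which is exactly the conceptual route you invoke in your closing paragraph; your direct verification sketch (coassociativity and quasi-associativity via the $3$-cocycle identity, multiplicativity of $\D$ via the braided Hopf axioms for $H$, and the length-$0$ versus length-$\ge 1$ split for the quasi-antipode) is the expected expansion of that argument and identifies the correct load-bearing identities. So your proposal is correct and consistent with the paper's (omitted) proof.
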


In the following, by $H\# k G$ we denote the resulting Majid algebra defined on $H\otimes kG.$

\begin{proposition}\label{p4.5} Let $\MM$ and $\R$ be as before, and $\R\#kG$ be the Majid algebra as defined in the previous proposition. Then the map
 $$F:\;\R\#kG \to \MM,\;\;\;\;X\otimes g\mapsto Xg$$
 is an isomorphism of Majid algebras.
\end{proposition}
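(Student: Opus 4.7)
The plan is to establish this bosonization isomorphism in three stages---bijectivity of $F$, coalgebra compatibility, and algebra compatibility---after which the remaining Majid data follow automatically from the construction in the previous proposition. For bijectivity, I would exploit the $G$-bigrading $\MM=\bigoplus_{g,h\in G}{^g\MM^h}$. For any fixed $h\in G$, right multiplication by $h$ maps ${^{gh^{-1}}\MM^{1}}$ into ${^g\MM^{h}}$, and because $h$ is group-like with inverse $h^{-1}\in \MM_{0}$ available, one obtains a linear inverse (up to a scalar involving $\Phi(\cdot,h,h^{-1})$) given by right multiplication by $h^{-1}$ with a suitable normalization. Since $\R=\bigoplus_{k\in G}{^k\MM^{1}}$ as a left subcomodule, the restriction $F\colon {^{gh^{-1}}\R}\otimes kh\to {^g\MM^{h}}$ is bijective for every $(g,h)$, and summing over all $(g,h)$ shows $F$ is a linear isomorphism.

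For coalgebra compatibility, the defining formula $\D_{\R}(X)=\Phi(x_{1},x_{2},x_{2}^{-1})\,X_{1}\cdot x_{2}^{-1}\otimes X_{2}$ can be inverted: writing $\D_{\R}(X)=X_{(1)}\otimes X_{(2)}$ with $X_{(i)}\in{^{x_{(i)}}\R}$ and using $x=x_{(1)}x_{(2)}$, one recovers $\D(X)=X_{1}\otimes X_{2}$ in $\MM$ in terms of the $\R$-factors. Then $\D(Xg)=\D(X)\D(g)=(X_{1}\otimes X_{2})(g\otimes g)$ must be re-associated via the quasi-associativity of $\MM$, and a direct calculation using the 3-cocycle identity yields $\Phi(x_{(1)},x_{(2)},g)^{-1}(X_{(1)}\cdot x_{(2)}g)\otimes(X_{(2)}\cdot g)$, which matches $(F\otimes F)\circ\D_{\R\#kG}(X\otimes g)$.

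For algebra compatibility, I would compute $F(X\otimes g)\cdot F(Y\otimes h)=(Xg)(Yh)$ in $\MM$ by repeatedly rebracketing via quasi-associativity and inserting $g^{-1}g$ to isolate a conjugated copy of $Y$. By the definition $g\triangleright Y=\frac{\Phi(gy,g^{-1},g)}{\Phi(g,g^{-1},g)}(g\cdot Y)\cdot g^{-1}$ from \eqref{eq4.1}, the product reduces to $X(g\triangleright Y)(gh)$ multiplied by an explicit rational expression in $\Phi$. The 3-cocycle identity then collapses this expression into $\frac{\Phi(xg,y,h)\Phi(x,y,g)}{\Phi(x,g,y)\Phi(xy,g,h)}$, which is exactly the multiplication prescribed for $\R\#kG$.

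The unit $1\otimes 1_{G}$ maps to $1_{\MM}$, and $\e$ is clearly preserved. The associator, $\alpha$, $\beta$, and antipode formulas for $\R\#kG$ recorded in the previous proposition were obtained by reverse-engineering the corresponding data on $\MM$ through $F$; once multiplication and comultiplication are shown to intertwine, these remaining pieces of Majid structure match automatically. The main obstacle throughout is the cocycle bookkeeping in the coalgebra and algebra steps: each comparison requires multiple applications of the 3-cocycle identity $\Phi(a,b,c)\Phi(a,bc,d)\Phi(b,c,d)=\Phi(ab,c,d)\Phi(a,b,cd)$ to telescope nested $\Phi$-factors, and the normalizations chosen for $\D_{\R}$, $\S_{\R}$, and the action $\triangleright$ in \eqref{eq4.1} are precisely tuned so that these identities close up cleanly.
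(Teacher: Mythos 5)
The paper records Proposition \ref{p4.5} explicitly \emph{without proof}, as part of the standard bosonization formalism imported from Majid's theory of Hopf algebras in braided categories, so there is no in-paper argument to compare yours against line by line. Your outline is the correct standard argument and would serve as a valid proof: the bigraded decomposition $\MM=\bigoplus_{g,h}{}^{g}\MM^{h}$ with right multiplication by $h$ (invertible up to the nonzero scalar $\Phi(x,h,h^{-1})$) does give bijectivity; and the coalgebra and algebra compatibilities are exactly the mechanical $3$-cocycle telescopings you describe, using that $\gr\Phi$ is supported on $\MM_0^{\otimes 3}$ so that quasi-associativity of homogeneous elements reduces to a ratio of $\Phi$-values on left and right $G$-degrees. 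The only place I would ask for slightly more than ``automatic'' is the final step: compatibility of the associators holds because both are concentrated on the group algebra part and agree there by coradical gradedness, compatibility of $\alpha,\beta$ is an immediate check on lengths, and the antipode is then forced by the uniqueness of the quasi-antipode once $(\alpha,\beta)$ are fixed (or by a short direct computation with $\S_{\R}$); stating one of these justifications would close the argument cleanly. No gap of substance.
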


\subsection{Generators of abelian groups}
For applications in Section 5, we also need to recall some elementary results given in \cite{qha5} about generators of abelian groups. Given two generators $g,h$ of $\mathbbm{Z}_{m}\times \mathbbm{Z}_{n}=\langle g_1,g_2|g_{1}^{m}=g_{2}^{n}=1, g_{1}g_{2}=g_{2}g_{1}\rangle$ with $m|n$,
we know that there are integers $a,b,c,d$ such that $g=g_{1}^{a}g_{2}^{b},h=g_{1}^{c}g_{2}^{d}$ and $g,h$ generate
$\mathbbm{Z}_{m}\times \mathbbm{Z}_{n}$. The question is that can we simplify the expression of $g,h$? That is, up to an automorphism of $\mathbbm{Z}_{m}\times \mathbbm{Z}_{n}$, deduce the integers $a,b,c,d$ as simple as possible.
To this end, we call two generators $h_{1},h_{2}$ of $\mathbbm{Z}_{m}\times \mathbbm{Z}_{n}$ are \emph{standard} if there
is an automorphism $\sigma\in \Aut(\mathbbm{Z}_{m}\times \mathbbm{Z}_{n})$ satisfying $\sigma(g_1)=h_1, \sigma(g_2)=h_2$.

The following two lemmas, which are \cite[Corollary 4.3]{qha5} and \cite[Proposition 4.1]{qha5} respectively, answer the above question.

\begin{lemma}\label{l4} Let $g,h$ be two generators of $\mathbbm{Z}_{m}\times \mathbbm{Z}_{n}$ with $m|n$. Assume the order of $h$ is $n$, then there are
standard generators $g_{1},g_{2}$ of $\mathbbm{Z}_{m}\times \mathbbm{Z}_{n}$ such that
$$g=g_{1}g_{2}^{a},\;\;h=g_{2}$$
for some $0\leq a<m$.
\end{lemma}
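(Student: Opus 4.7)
The plan is to exploit the fact that $h$, having order $n$, is an element of maximal order in $G:=\mathbb{Z}_m\times\mathbb{Z}_n$ (since $m\mid n$ makes $n$ the exponent of $G$). By the standard structure result that in a finite abelian group any element whose order equals the exponent generates a direct summand, $\langle h\rangle$ is then a direct summand of $G$. I would therefore begin by writing $G=\langle h\rangle\oplus K$ for some subgroup $K$ of order $m$.

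Next I would identify the structure of $K$: because $g$ and $h$ together generate $G$, the image of $g$ in $G/\langle h\rangle\cong K$ is a generator. Hence $K$ is cyclic of order $m$, and writing $g=g_1h^{a}$ with $g_1$ the $K$-component of $g$, the element $g_1$ generates $K$ and in particular satisfies $\ord(g_1)=m$.

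Setting $g_2:=h$, I would then verify that $(g_1,g_2)$ is a standard pair in the sense defined in the excerpt. The assignment of the fixed presentation generators $g_1^{\mathrm{fix}}\mapsto g_1,\ g_2^{\mathrm{fix}}\mapsto g_2$ defines a well-defined group homomorphism $\sigma\colon G\to G$ (legitimate because $\ord(g_1)=m$ and $\ord(g_2)=n$ match the orders of the fixed generators), which is surjective since $g_1,g_2$ generate $G$, and hence an automorphism of the finite group $G$. This directly exhibits the desired standard pair $(g_1,g_2)$ with $g=g_1g_2^{a}$ and $h=g_2$.

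The main obstacle, and the most delicate part of the proof, is the normalization $0\le a<m$. Different choices of the complement $K$ shift $g_1$ by elements of $\langle h\rangle$ of order dividing $m$, namely by elements of $\langle h^{n/m}\rangle$, and correspondingly shift the exponent $a$ by multiples of $n/m$. I would exploit this freedom, together with the divisibility $m\mid n$, to select the representative $a$ in the specified range; a careful bookkeeping of $a$ modulo both $m$ and $n/m$ is where the most attention will be needed to complete the argument cleanly.
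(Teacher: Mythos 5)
The paper does not actually prove this lemma: it is imported verbatim from \cite[Corollary 4.3]{qha5}, so there is no in-paper argument to measure yours against. Your first three steps are correct and would give a clean self-contained proof that a standard pair $(g_1,g_2)$ with $h=g_2$ and $g=g_1g_2^{a}$ exists for \emph{some} exponent $a$: since $m\mid n$ the exponent of $G=\mathbb{Z}_m\times\mathbb{Z}_n$ is $n$, so $\langle h\rangle$ is a direct summand; the complement is cyclic of order $m$ because $G/\langle h\rangle$ is generated by the image of $g$; and matching the orders $m,n$ of $(g_1,g_2)$ with those of the presentation generators shows the resulting surjective endomorphism is an automorphism, i.e.\ the pair is standard.

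The gap is exactly where you flagged it, and it is not merely a matter of bookkeeping: the normalization $0\le a<m$ cannot be reached with the freedom you identified, and the statement with that range in fact fails. As you observe, once $g_2=h$ is fixed the only latitude is to replace $g_1$ by $g_1g_2^{c}$ subject to $(g_1g_2^{c})^{m}=1$, i.e.\ $(n/m)\mid c$; hence $a$ is determined modulo $n/m$, and the best normalization obtainable is $0\le a<n/m$, which is weaker than $0\le a<m$ whenever $n>m^{2}$. Concretely, take $m=2$, $n=8$, $h=e_2$ and $g=e_1e_2^{2}$, where $e_1,e_2$ are the presentation generators: these two elements generate $\mathbb{Z}_2\times\mathbb{Z}_8$ and $h$ has order $8$, but the requirement $(gh^{-a})^{2}=1$ forces $a\equiv 2\pmod 4$, so no admissible $a$ lies in $\{0,1\}$. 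Thus either the bound in the statement should read $0\le a<n/m$ (or $0\le a<n$), in which case your argument is complete once the coset computation above is written out, or a genuinely different idea is required; the ``careful bookkeeping of $a$ modulo both $m$ and $n/m$'' you propose will not close the gap.
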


\begin{lemma}\label{l3} Assume that $g$ and $h$ generate the abelian group $\mathbbm{Z}_{m}\times \mathbbm{Z}_{n}$ with $m|n$, then there are integers $m_{1},m_{2},n_{1},n_{2},a,b$ such that

\emph{(i)} $m=m_{1}n_{1},\;n=m_{2}n_{2},\;\;\;\;m_{1}|m_{2},\; n_{1}|n_{2},\;\;\;\;(m_{2},n_{2})=1$;

\emph{(ii)} $0\leq a< n_{2},\;0\leq b<m_{2}$ and $$g=g_{2}h_{1}h_{2}^{a},\;\;h=g_{1}g_{2}^{b}h_{2}$$
where $g_1,g_{2}$ $(\mathrm{resp.} \ h_{1},h_{2}) $ are standard generators of $\mathbbm{Z}_{m_1}\times \mathbbm{Z}_{m_2}$ $(\mathrm{resp.} \  \mathbbm{Z}_{n_1}\times \mathbbm{Z}_{n_2}).$
\end{lemma}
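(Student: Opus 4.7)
My strategy is to decompose the problem into $p$-primary pieces via the Chinese Remainder Theorem and then apply Lemma \ref{l4} to each piece. For each prime $p\mid n$, write $p^{\alpha_p}\|m$ and $p^{\beta_p}\|n$, so $\alpha_p\le \beta_p$ since $m\mid n$, and the $p$-Sylow subgroup of $\mathbbm{Z}_m\times\mathbbm{Z}_n$ is $\mathbbm{Z}_{p^{\alpha_p}}\times\mathbbm{Z}_{p^{\beta_p}}$. The projections $g^{(p)},h^{(p)}$ generate this Sylow subgroup; since the exponent of a finite abelian group coincides with the lcm of the orders of any generating set, at least one of $g^{(p)},h^{(p)}$ has order exactly $p^{\beta_p}$.

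I partition the primes dividing $n$ by labelling $p$ as type ``$n$'' whenever $h^{(p)}$ has order $p^{\beta_p}$ (breaking ties in favour of $h$), and type ``$m$'' otherwise. Set
\[
m_2=\prod_{p\text{ of type }m}p^{\beta_p},\quad n_2=\prod_{p\text{ of type }n}p^{\beta_p},\quad m_1=\prod_{p\text{ of type }m}p^{\alpha_p},\quad n_1=\prod_{p\text{ of type }n}p^{\alpha_p}.
\]
Then $m_1\mid m_2$, $n_1\mid n_2$, $(m_2,n_2)=1$, $m=m_1n_1$ and $n=m_2n_2$, establishing (i). Applying the Chinese Remainder Theorem yields an isomorphism $\mathbbm{Z}_m\times\mathbbm{Z}_n\cong(\mathbbm{Z}_{m_1}\times\mathbbm{Z}_{m_2})\times(\mathbbm{Z}_{n_1}\times\mathbbm{Z}_{n_2})$ under which, by construction of the labelling, the projection of $g$ to $\mathbbm{Z}_{m_1}\times\mathbbm{Z}_{m_2}$ has order $m_2$, while the projection of $h$ to $\mathbbm{Z}_{n_1}\times\mathbbm{Z}_{n_2}$ has order $n_2$.

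Since the projections of a generating pair to any direct factor still generate that factor, I can apply Lemma \ref{l4} separately on each side. In $\mathbbm{Z}_{m_1}\times\mathbbm{Z}_{m_2}$, taking the projection of $g$ (of order $m_2$) in the role of the lemma's ``$h$'' produces standard generators $g_1,g_2$ under which $g$ corresponds to $g_2$ and $h$ corresponds to $g_1g_2^{b}$ for some $0\le b<m_1\le m_2$. Symmetrically, in $\mathbbm{Z}_{n_1}\times\mathbbm{Z}_{n_2}$, the projection of $h$ (of order $n_2$) plays the role of ``$h$'', yielding standard generators $h_1,h_2$ with $h\leftrightarrow h_2$ and $g\leftrightarrow h_1h_2^{a}$ for some $0\le a<n_1\le n_2$. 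Gluing the two sides back through the CRT isomorphism then gives the desired expressions $g=g_2h_1h_2^{a}$ and $h=g_1g_2^{b}h_2$, which automatically verifies the bound conditions in (ii).

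The main obstacle I anticipate is the bookkeeping around the labelling. In the case $\alpha_p=\beta_p$, both $g^{(p)}$ and $h^{(p)}$ may attain the maximal $p$-order, so the factorisations $m=m_1n_1$ and $n=m_2n_2$ are not canonical; a coherent global choice must be made before invoking Lemma \ref{l4}. A second technical point is to check that the ``crossed'' parts—the $h_1 h_2^a$ component of $g$ and the $g_1 g_2^b$ component of $h$—actually remain in the ranges $a<n_2$ and $b<m_2$ after one performs the CRT identification, but this is immediate once the proof from Lemma \ref{l4} is applied in each factor.
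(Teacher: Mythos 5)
Your proof is correct. Note that the paper itself gives no argument for this statement: Lemma \ref{l3} is simply quoted from \cite[Proposition 4.1]{qha5}, so there is no internal proof to compare against. Your route --- splitting $\mathbbm{Z}_{m}\times\mathbbm{Z}_{n}$ into its Sylow subgroups, sorting the primes according to which of $g,h$ attains the maximal $p$-order (one of them must, since the exponent of an abelian group equals the lcm of the orders of any generating set, and for a $p$-group that lcm is the maximum), reassembling the two classes of primes into $(\mathbbm{Z}_{m_1}\times\mathbbm{Z}_{m_2})\times(\mathbbm{Z}_{n_1}\times\mathbbm{Z}_{n_2})$ via the Chinese Remainder Theorem, and then invoking Lemma \ref{l4} separately in each factor --- is the natural one, and every step checks out: the conditions in (i) are immediate from the definitions of $m_i,n_i$ as products over disjoint sets of primes with $\alpha_p\le\beta_p$; the projections of a generating pair generate each direct factor; the designated element has full order $m_2$ (resp.\ $n_2$) in its factor, so Lemma \ref{l4} applies; and the bounds $b<m_1\le m_2$ and $a<n_1\le n_2$ it returns are even sharper than those demanded in (ii). The two caveats you raise are indeed harmless: the factorisation need not be canonical (the lemma only asserts existence), and since standard generators are defined only up to an automorphism of the abstract group, it does not matter that $\mathbbm{Z}_{m_1}\times\mathbbm{Z}_{m_2}$ arises here as a product of Sylow pieces rather than in its invariant-factor form.
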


\section{$3$-cocycles on $\mathbb{Z}_{\mathbbm{m}}\times \mathbb{Z}_{\mathbbm{n}}$ and their resolutions}
The aim of this section is to show that every $3$-cocycle $\Phi$ on $\mathbb{Z}_{\mathbbm{m}}\times \mathbb{Z}_{\mathbbm{n}}$ is abelian in the sense of \cite{EM} and can be ``resolved" in a bigger abelian group $G,$ namely there exists a group epimorphism $\pi \colon G \to\mathbb{Z}_{\mathbbm{m}}\times \mathbb{Z}_{\mathbbm{n}}$ such that the pull-back $\pi^{*}(\Phi)$ is a coboundary on $G$.

\subsection{Abelian cocycles}\label{s3.2}
The original definition of abelian cocycles was given in \cite{EM}. For our purpose, we prefer the following equivalent definition via twisted quantum doubles appeared in \cite{MN}. So first we need to recall the definition of twisted quantum doubles \cite{DPR}. The {\em twisted quantum double} $D^{\omega}(G)$ of $G$ with respect to the $3$-cocycle $\omega$ on $G$ is the semisimple quasi-Hopf algebra with underlying vector space $(kG)^{\ast} \otimes kG$  in which multiplication,
comultiplication $\Delta$, associator $\phi$, counit $\varepsilon$, antipode
$\S$, $\alpha$ and $\beta$ are given by
\begin{eqnarray*}
&&(e(g) \otimes x)(e(h) \otimes y) =\theta_g(x,y) \delta_{g^x,h}
e(g)\otimes x y,\\
&&\Delta(e(g)\otimes x)  = \sum_{hk=g} \gamma_x(h,k) e(h)\otimes x
\otimes e(k) \otimes x,\\
&&\phi = \sum_{g,h,k \in G} \omega(g,h,k)^{-1} e(g) \otimes 1 \otimes
e(h) \otimes 1\otimes e(k) \otimes 1,\\
&&\S(e(g)\otimes x) =
\theta_{g^{-1}}(x,x^{-1})^{-1}\gamma_x(g,g^{-1})^{-1}e(x^{-1}g^{-1}x)\otimes
x^{-1},\\
&&\varepsilon(e(g)\otimes x) = \delta_{g,1}, \quad \alpha=1, \quad \beta=\sum_{g \in
G} \omega(g,g^{-1},g)e(g)\otimes 1,
\end{eqnarray*}
where $\{e(g)|g\in G\}$ is the dual basis of $\{g|g\in G\}$,  $\delta_{g,1}$ is the Kronecker delta,  $g^x=x^{-1}g x$, and
\begin{eqnarray*}
\theta_g(x,y) &=&\frac{\omega(g,x,y)\omega(x,y,(x y)^{-1}g x y)}{\omega(x,x^{-1}g x,y)}, \\
\gamma_g(x,y) & = & \frac{\omega(x,y,g)\omega(g, g^{-1}x g, g^{-1}yg)}{\omega(x,g,
g^{-1}y g)}
\end{eqnarray*}
for any $x, y, g \in G.$

It is well known that $M$ is a
left $D^{\omega}(G)$-module if and only if $M$ is a left-left Yetter-Drinfeld module over $(kG,\omega)$ as defined in Subsection 2.3.

\begin{definition} A $3$-cocycle $\omega$ on $G$ is called \emph{abelian} if $D^{\omega}(G)$ is a commutative algebra.
\end{definition}

\subsection{$3$-cocycles} Let $\mathbb{G}:=\mathbb{Z}_{\mathbbm{m}}\times \mathbb{Z}_{\mathbbm{n}}$ and
 $\mathbbm{g}_{1}$ (resp. $\mathbbm{g}_{2}$) be a generator of $\mathbb{Z}_{\mathbbm{m}}$
(resp. $\mathbb{Z}_{\mathbbm{n}}$).
For any natural numbers $0\leq a<\mathbbm{m}, \ 0\leq b<(\mathbbm{m},
\mathbbm{n}), \ 0\leq d<\mathbbm{n}$, define a map
$$\Phi_{a,b,d}: \mathbb{G}\times \mathbb{G}\times \mathbb{G}\to k^{\ast}$$ by setting
\begin{equation} \label{eq3.1}
\Phi_{a,b,d}(\mathbbm{g}_{1}^{i}\mathbbm{g}_{2}^{j},\mathbbm{g}_{1}^{s}\mathbbm{g}_{2}^{t},\mathbbm{g}_{1}^{k}\mathbbm{g}_{2}^{l})
=\zeta_{\mathbbm{m}}^{a[\frac{k+s}{\mathbbm{m}}]i}
\zeta_{\mathbbm{n}}^{b[\frac{k+s}{\mathbbm{m}}]j}\zeta_{\mathbbm{n}}^{d[\frac{t+l}{\mathbbm{n}}]j}.\end{equation}

\begin{lemma}\label{l3.6} \emph{(\cite[Proposition 3.9]{bgrc1})} The set $\{\Phi_{a,b,d}|0\leq a<\mathbbm{m}, 0\leq b<(\mathbbm{m},
\mathbbm{n}),0\leq d<\mathbbm{n}\}$ is a complete set of representatives of the normalized $3$-cocycles
on $\mathbb{Z}_{\mathbbm{m}}\times \mathbb{Z}_{\mathbbm{n}}$.
\end{lemma}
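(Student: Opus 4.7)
The plan is to proceed in three steps: verify that each $\Phi_{a,b,d}$ is a normalized 3-cocycle on $\mathbb{G} = \mathbb{Z}_\m \times \mathbb{Z}_\n$; compute $|H^3(\mathbb{G}, k^*)|$ and match it against the cardinality of the indexing set; and separate the classes $[\Phi_{a,b,d}]$ by evaluating on well-chosen triples.

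First I would substitute the explicit formula \eqref{eq3.1} into the cocycle identity
\[ \Phi(y,z,w)\,\Phi(x,yz,w)\,\Phi(x,y,z) \;=\; \Phi(xy,z,w)\,\Phi(x,y,zw) \]
and observe that $\Phi_{a,b,d}$ factors as a product of three independent pieces carried by the parameters $a$, $b$, $d$ respectively, so it suffices to verify the identity for each piece in turn. Each piece is built from the floor quantity $\bigl[\frac{u+v}{r}\bigr]$, the standard cocycle-producing expression on a cyclic group $\mathbb{Z}_r$ that already underlies Example~\ref{e2.7}. The verification reduces to the elementary identity $\bigl[\frac{u}{r}\bigr] + \bigl[\frac{(u \bmod r) + v}{r}\bigr] = \bigl[\frac{u+v}{r}\bigr]$ applied in a few places, and normalization is immediate since each exponent vanishes whenever the relevant coordinate is zero.

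For the counting, since $k$ is algebraically closed of characteristic zero, $k^*$ is divisible, so the Universal Coefficient Theorem together with Pontryagin duality on finite abelian groups yield $H^n(\mathbb{G}, k^*) \cong H_n(\mathbb{G}, \mathbb{Z})$. Applying the Künneth formula to integral group homology, combined with the standard computation $H_*(\mathbb{Z}_r, \mathbb{Z}) = \mathbb{Z}, \mathbb{Z}_r, 0, \mathbb{Z}_r, 0, \ldots$ for cyclic groups, one gets
\[ H_3(\mathbb{Z}_\m \times \mathbb{Z}_\n, \mathbb{Z}) \;\cong\; \mathbb{Z}_\m \oplus \mathbb{Z}_{(\m,\n)} \oplus \mathbb{Z}_\n, \]
in which the three summands arise respectively from $H_3(\mathbb{Z}_\m) \otimes H_0(\mathbb{Z}_\n)$, the torsion contribution $\operatorname{Tor}(H_1(\mathbb{Z}_\m), H_1(\mathbb{Z}_\n))$, and $H_0(\mathbb{Z}_\m) \otimes H_3(\mathbb{Z}_\n)$. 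The total order $\m \cdot (\m,\n) \cdot \n$ agrees exactly with the number of tuples $(a,b,d)$ in the indexing set.

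Finally, distinctness of the classes $[\Phi_{a,b,d}]$ is exhibited by three independent ``test'' restrictions: restricting to $\mathbb{Z}_\m \hookrightarrow \mathbb{G}$ (set $j=t=l=0$) recovers the standard cyclic 3-cocycle of Example~\ref{e2.7} whose class in $H^3(\mathbb{Z}_\m, k^*) \cong \mathbb{Z}_\m$ is the residue of $a$; symmetrically the $\mathbb{Z}_\n$-restriction detects $d$; while the mixed parameter $b$ is isolated by evaluating on triples of the form $(\g_1, \g_2^{\n-1}, \g_2)$ and comparing against the coboundary ambiguity coming from 2-cochains, forcing $[\Phi_{0,b,0}]$ to be trivial only when $(\m,\n) \mid b$. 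The main technical obstacle is precisely this last point: cleanly matching the middle $\operatorname{Tor}$ summand $\mathbb{Z}_{(\m,\n)}$ with the $b$-family requires either a direct inductive non-coboundary verification or an explicit slant-product argument, which is where the cited calculation in \cite{bgrc1} does the bulk of the work. Combined with the count above, these three distinctness checks force $\{\Phi_{a,b,d}\}$ to exhaust $H^3(\mathbb{G}, k^*)$.
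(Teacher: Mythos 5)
The paper itself offers no proof of this lemma --- it is imported verbatim as \cite[Proposition 3.9]{bgrc1} --- so your attempt has to stand on its own. Two of your three steps are sound: the factorization of $\Phi_{a,b,d}$ into an $a$-piece, a $b$-piece and a $d$-piece, each verified to be a cocycle via the identity $[\frac{(u)'+v}{r}]=[\frac{u+v}{r}]-[\frac{u}{r}]$, works (one only has to remember that the formula is defined on normal-form representatives, since e.g.\ the $b$-piece is not invariant under $s\mapsto s+\m$); and the computation $H^3(\Z_\m\times\Z_\n,k^*)\cong H_3(\Z_\m\times\Z_\n,\Z)\cong \Z_\m\oplus\Z_{(\m,\n)}\oplus\Z_\n$ via divisibility of $k^*$ and K\"unneth is correct, giving the right count $\m\cdot(\m,\n)\cdot\n$.

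The genuine gap is exactly where you flag it, and flagging it does not fill it. Restriction to the two cyclic factors separates the $a$- and $d$-parameters but is blind to $b$ (both restrictions kill the $b$-piece), so after that reduction everything hinges on the claim that $\Phi_{0,b,0}$ is a coboundary if and only if $(\m,\n)\mid b$. Note that \emph{both} directions are needed: the ``only if'' to separate the classes $[\Phi_{0,b,0}]$ for $0\le b<(\m,\n)$, and the ``if'' (that $\Phi_{0,(\m,\n),0}$ really is a coboundary) to see that the family $\{\Phi_{0,b,0}\}_{0\le b<\n}$, which is a cyclic group of order $\n$ at the cochain level, collapses to $\Z_{(\m,\n)}$ in cohomology so that the stated index range suffices. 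Your proposed detector does not work as written: for the triple $(\g_1,\g_2^{\n-1},\g_2)$ the first argument has trivial $\g_2$-component, so $\Phi_{0,b,0}$ evaluates to $1$ there and sees nothing; and in any case single cocycle values are not cohomology invariants. What is actually required is either an explicit comparison of the bar resolution with the tensor product of periodic resolutions (producing chain maps that identify $[\Phi_{0,b,0}]$ with $b\in\operatorname{Tor}(\Z_\m,\Z_\n)$), or a genuinely worked-out invariant; this is the content of the cited computation in \cite{bgrc1}, and deferring to it means the crux of the lemma remains unproved in your write-up.
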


With this, it is not hard to find that
\begin{proposition} The $3$-cocycles $\Phi_{a,b,d}$ are abelian.
\end{proposition}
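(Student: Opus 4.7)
The plan is to unwind the definition of abelian cocycle in this special case and then observe that the resulting symmetry condition is built into the explicit formula \eqref{eq3.1}. Since $\mathbb{G}=\mathbb{Z}_{\mathbbm{m}}\times \mathbb{Z}_{\mathbbm{n}}$ is abelian, conjugation is trivial, so in the formulas for the product of $D^{\Phi_{a,b,d}}(\mathbb{G})$ recalled in Subsection~3.1 one has $g^x=g$ and $(xy)^{-1}g(xy)=g$. Hence
$$(e(g)\otimes x)(e(h)\otimes y)=\theta_g(x,y)\,\delta_{g,h}\,e(g)\otimes xy,$$
where
$$\theta_g(x,y)=\frac{\Phi_{a,b,d}(g,x,y)\,\Phi_{a,b,d}(x,y,g)}{\Phi_{a,b,d}(x,g,y)}.$$
Commutativity of $D^{\Phi_{a,b,d}}(\mathbb{G})$ is therefore equivalent to the symmetry $\theta_g(x,y)=\theta_g(y,x)$ for all $g,x,y\in\mathbb{G}$, and this is what I would verify.

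To check it, I would simply substitute the explicit formula \eqref{eq3.1}. Write $g=\mathbbm{g}_1^p\mathbbm{g}_2^q,\ x=\mathbbm{g}_1^i\mathbbm{g}_2^j,\ y=\mathbbm{g}_1^s\mathbbm{g}_2^t$. The crucial observation is that every carry bracket $[\tfrac{\cdot+\cdot}{\mathbbm{m}}]$ or $[\tfrac{\cdot+\cdot}{\mathbbm{n}}]$ appearing in \eqref{eq3.1} is manifestly symmetric in its two summands. Reading off which indices belong to which argument, both $\Phi_{a,b,d}(x,y,g)$ and $\Phi_{a,b,d}(x,g,y)$ come out equal to
$$\zeta_{\mathbbm{m}}^{a[\frac{p+s}{\mathbbm{m}}]i}\,\zeta_{\mathbbm{n}}^{b[\frac{p+s}{\mathbbm{m}}]j}\,\zeta_{\mathbbm{n}}^{d[\frac{q+t}{\mathbbm{n}}]j},$$
since the only difference between the two expressions is whether the carry $[\tfrac{p+s}{\mathbbm{m}}]$ is written as $[\tfrac{s+p}{\mathbbm{m}}]$ and similarly for the $\mathbbm{n}$-carry. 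Consequently these two factors cancel in the ratio and one obtains $\theta_g(x,y)=\Phi_{a,b,d}(g,x,y)$.

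It then suffices to show that $\Phi_{a,b,d}(g,x,y)$ is symmetric in $x,y$. But in \eqref{eq3.1} the arguments $x,y$ enter only through the first-index pair $(i,j)$ of $x$ together with the carry $[\tfrac{s+i}{\mathbbm{m}}]$ coupling the $\mathbbm{g}_1$-exponents of $x$ and $y$, and through the carry $[\tfrac{j+t}{\mathbbm{n}}]$ coupling their $\mathbbm{g}_2$-exponents; both of these are symmetric in the two summands, and the overall expression is in fact symmetric in $(i,j)\leftrightarrow (s,t)$. Hence $\Phi_{a,b,d}(g,x,y)=\Phi_{a,b,d}(g,y,x)$, which gives the required $\theta_g(x,y)=\theta_g(y,x)$ and shows that $\Phi_{a,b,d}$ is abelian. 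The whole argument is purely bookkeeping; the only potential pitfall is to keep careful track of which of $\mathbbm{g}_1$- and $\mathbbm{g}_2$-exponents sits in which argument of \eqref{eq3.1}, so that one correctly recognises the symmetry of each carry bracket. I do not foresee any conceptual obstacle beyond this.
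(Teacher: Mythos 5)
Your proposal is correct and follows essentially the same route as the paper: the paper likewise observes from \eqref{eq3.1} that $\Phi_{a,b,d}(x,y,z)=\Phi_{a,b,d}(x,z,y)$, deduces $\theta_g(x,y)=\theta_g(y,x)$, and concludes that $D^{\Phi_{a,b,d}}(\mathbb{G})$ is commutative; you merely spell out the cancellation $\theta_g(x,y)=\Phi_{a,b,d}(g,x,y)$ and the reduction of commutativity to the symmetry of $\theta_g$ in more detail. One small slip in your bookkeeping: in $\Phi_{a,b,d}(g,x,y)$ the exponents appearing outside the carry brackets are those of the \emph{first} argument $g$ (namely $p,q$), not the pair $(i,j)$ of $x$ as you state, but since $x$ and $y$ then enter only through the symmetric carries, your conclusion that the expression is invariant under $(i,j)\leftrightarrow(s,t)$ is still correct.
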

\begin{proof} By \eqref{eq3.1} it is clear that
$$\Phi_{a,b,d}{(x,y,z)}=\Phi_{a,b,d}(x,z,y)$$
for all $x,y,z\in \mathbb{G}$. It follows that $$\theta_g(x,y)=\theta_g(y,x)$$
for all $g,x,y\in \mathbb{G}$, which implies that $D^{\Phi_{a,b,d}}(\mathbb{G})$ is commutative.
\end{proof}

\begin{corollary} \label{c3.7}
All Yetter-Drinfeld modules over $(k\mathbb{G},\Phi_{a,b,d})$ are diagonal.
\end{corollary}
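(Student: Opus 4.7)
The plan is to deduce this corollary directly from the preceding proposition by invoking the equivalence stated in the remark following Definition~2.9: over an abelian group $G$, every Yetter-Drinfeld module in ${^G_G\mathcal{YD}^\Phi}$ is diagonal if and only if $\Phi$ is an abelian $3$-cocycle in the sense of Subsection~3.1. Since the preceding proposition shows that each $\Phi_{a,b,d}$ is abelian, the corollary follows at once.

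To unpack the reasoning, I would recall first that a module $V$ over $(k\mathbb{G}, \Phi_{a,b,d})$ in ${^{\mathbb{G}}_{\mathbb{G}}\mathcal{YD}^{\Phi_{a,b,d}}}$ decomposes as $V = \bigoplus_{g\in \mathbb{G}} {^gV}$, where each ${^gV}$ is a projective $\mathbb{G}$-representation for the $2$-cocycle $\widetilde{\Phi}_{a,b,d,g}$ introduced in Subsection~2.3. The equivalence of categories between left $D^{\Phi_{a,b,d}}(\mathbb{G})$-modules and left-left Yetter-Drinfeld modules over $(k\mathbb{G}, \Phi_{a,b,d})$ recalled in Subsection~3.1 translates each homogeneous component ${^gV}$ into the isotypic piece of the $D^{\Phi_{a,b,d}}(\mathbb{G})$-action indexed by $g$, and identifies irreducible projective representations of $\mathbb{G}$ with cocycle $\widetilde{\Phi}_{a,b,d,g}$ with the simple modules of $D^{\Phi_{a,b,d}}(\mathbb{G})$ supported on $g$.

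Consequently, the diagonality of every $V$ is equivalent to the statement that every simple $D^{\Phi_{a,b,d}}(\mathbb{G})$-module is one-dimensional. Since $\mathbb{G}$ is finite and $k$ is algebraically closed of characteristic zero, this holds precisely when $D^{\Phi_{a,b,d}}(\mathbb{G})$ is commutative as an algebra. The preceding proposition established exactly this commutativity, so every irreducible projective representation of $\mathbb{G}$ for the cocycle $\widetilde{\Phi}_{a,b,d,g}$ is one-dimensional, and hence each ${^gV}$ splits as a direct sum of one-dimensional projective representations. This is the definition of $V$ being diagonal.

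There is no substantive obstacle here; the only nontrivial ingredient is the already-cited equivalence from \cite{EM, MN}, and the rest of the argument is a direct dictionary application. The writeup should therefore be no more than a short paragraph pointing back to the preceding proposition together with the remark after Definition~2.9.
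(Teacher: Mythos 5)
Your argument is correct and is exactly the route the paper takes: the corollary is deduced from the commutativity of $D^{\Phi_{a,b,d}}(\mathbb{G})$ established in the preceding proposition, via the equivalence (recalled in the remark after Definition 2.9 and in Subsection 3.1, citing \cite{EM,MN}) between Yetter-Drinfeld modules over $(k\mathbb{G},\Phi_{a,b,d})$ and modules over the semisimple twisted quantum double. The paper leaves this dictionary implicit, whereas you spell out the Artin--Wedderburn step; no gap either way.
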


\subsection{Resolutions}
One of our key observations is that any $3$-cocycle $\Phi$ on $\mathbb{G}$ can be ``resolved" in
 a suitable bigger abelian group $G$. More precisely, we may take $G=\mathbb{Z}_{m} \times \mathbb{Z}_n =\langle g_1 \rangle \times \langle g_2 \rangle$
with $m=\mathbbm{m}^2, \ n=\mathbbm{n}^2$ and the canonical epimorphism
$$\pi:\;G\to \mathbbm{G},\;\;\;\;g_{1}\mapsto \mathbbm{g}_{1},\;g_{2}\mapsto \mathbbm{g}_{2}.$$
By pulling back the $3$-cocycles on $\mathbb{G}$ along $\pi$ one gets $3$-cocycles on $G$. Therefore, for any $a,b,d$, the map
$$\pi^{\ast}(\Phi_{a,b,d}):\;G\times G\times G\to k^{\ast},\;\;(g,h,z)\mapsto \Phi_{a,b,d}(\pi(g),\pi(h),\pi(z)),\;\;\;\forall g,h,z\in G$$
becomes a $3$-cocycle on $G$. The observation is that $\pi^{\ast}(\Phi_{a,b,d})$ is indeed a coboundary. In fact, consider the following map
\begin{equation} \label {eq3.2} J_{a,b,d}:\; G\times G\to k^{\ast};\;\;\;\;(g_{1}^{x_{1}}g_{2}^{x_{2}},g_{1}^{y_{1}}g_{2}^{y_{2}})\mapsto
\zeta_{m}^{ax_{1}(y_{1}-y_{1}')}\zeta_{\mathbbm{m}\mathbbm{n}}^{bx_{2}(y_{1}-y_{1}')}\zeta_{n}^{dx_{2}(y_{2}-y_{2}'')}
\end{equation}
where $y_1'$ is the remainder of $y_1$ divided by $\mathbbm{m}$ (resp. $y_2''$ is the remainder of $y_2$ divided by $\mathbbm{n}$).
Here we require that $\zeta_{m}^{\mathbbm{m}}=\zeta_{\mathbbm{m}\mathbbm{n}}^{\mathbbm{n}}=\zeta_{\mathbbm{m}}$ and
$\zeta_{n}^{\mathbbm{n}}=\zeta_{\mathbbm{m}\mathbbm{n}}^{\mathbbm{m}}=\zeta_{\mathbbm{n}}$. Of course, this requirement can
be easily satisfied. For example, just take $\zeta_{t}=e^{\frac{2\pi i}{t}}$ for $t\in \mathbb{N}$. Thus, we have

\begin{proposition}\label{l3.8} The differential of $J_{a,b,d}$ is equal to $\pi^{\ast}(\Phi_{a,b,d})$, that is, $\partial(J_{a,b,d})=\pi^{\ast}(\Phi_{a,b,d}).$
\end{proposition}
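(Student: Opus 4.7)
The statement asserts a purely combinatorial identity, so the plan is to verify $\partial(J_{a,b,d})=\pi^{\ast}(\Phi_{a,b,d})$ by direct computation, exploiting the fact that both sides decompose as a product of three independent contributions parametrised by $a$, $b$, and $d$ respectively. Since the coboundary operator $\partial$ is multiplicative on $2$-cochains with values in $k^{\ast}$, it suffices to treat each of the three factors
\[ J^{(a)}(x,y)=\zeta_m^{ax_1(y_1-y_1')}, \qquad J^{(b)}(x,y)=\zeta_{\mathbbm{m}\mathbbm{n}}^{bx_2(y_1-y_1')}, \qquad J^{(d)}(x,y)=\zeta_n^{dx_2(y_2-y_2'')} \]
separately and match it with the corresponding factor in \eqref{eq3.1} evaluated on $(\pi(x),\pi(y),\pi(z))$.

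The first simplification step is to rewrite each factor using the assumed compatibility $\zeta_m^{\mathbbm{m}}=\zeta_{\mathbbm{m}\mathbbm{n}}^{\mathbbm{n}}=\zeta_{\mathbbm{m}}$ and $\zeta_n^{\mathbbm{n}}=\zeta_{\mathbbm{m}\mathbbm{n}}^{\mathbbm{m}}=\zeta_{\mathbbm{n}}$ together with the tautology $y_1-y_1'=\mathbbm{m}[y_1/\mathbbm{m}]$ and $y_2-y_2''=\mathbbm{n}[y_2/\mathbbm{n}]$. This turns
\[ J^{(a)}(x,y)=\zeta_{\mathbbm{m}}^{ax_1[y_1/\mathbbm{m}]}, \qquad J^{(b)}(x,y)=\zeta_{\mathbbm{n}}^{bx_2[y_1/\mathbbm{m}]}, \qquad J^{(d)}(x,y)=\zeta_{\mathbbm{n}}^{dx_2[y_2/\mathbbm{n}]}, \]
so that the $m$-th and $n$-th roots of unity disappear and the exponents involved on both sides lie in the same cyclic groups. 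Then, applying $\partial J(x,y,z)=J(y,z)J(xy,z)^{-1}J(x,yz)J(x,y)^{-1}$ and cancelling gives, for example for the $a$-factor,
\[ \partial J^{(a)}(x,y,z)=\zeta_{\mathbbm{m}}^{a x_1\bigl([(y_1+z_1)/\mathbbm{m}]-[y_1/\mathbbm{m}]-[z_1/\mathbbm{m}]\bigr)}, \]
and analogous formulas for the $b$- and $d$-factors.

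The heart of the verification is the elementary floor identity
\[ \Bigl[\tfrac{Y+Z}{M}\Bigr]-\Bigl[\tfrac{Y}{M}\Bigr]-\Bigl[\tfrac{Z}{M}\Bigr]=\Bigl[\tfrac{Y'+Z'}{M}\Bigr], \]
valid for any integers $Y,Z$ and any $M\ge 1$ (where $Y',Z'$ denote the remainders mod $M$); this follows immediately from $Y=Y'+M[Y/M]$. Substituting $M=\mathbbm{m}$ with $(Y,Z)=(y_1,z_1)$, and $M=\mathbbm{n}$ with $(Y,Z)=(y_2,z_2)$, produces exactly the exponents $[\tfrac{y_1'+z_1'}{\mathbbm{m}}]$ and $[\tfrac{y_2''+z_2''}{\mathbbm{n}}]$ appearing in $\pi^{\ast}(\Phi_{a,b,d})$ after one remembers that $\zeta_{\mathbbm{m}}^{x_1}=\zeta_{\mathbbm{m}}^{x_1'}$ and $\zeta_{\mathbbm{n}}^{x_2}=\zeta_{\mathbbm{n}}^{x_2''}$. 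Matching the three factors completes the proof.

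The only place where care is genuinely needed is the $b$-factor, because there $J^{(b)}$ is defined with $\zeta_{\mathbbm{m}\mathbbm{n}}$ while $\Phi^{(b)}$ is written with $\zeta_{\mathbbm{n}}$; the prescribed compatibility between the chosen roots of unity is exactly what makes the two match, and this is the single point where the choice of normalising $\zeta_{\mathbbm{m}\mathbbm{n}}$ in \eqref{eq3.2} plays a role. Beyond that, no obstruction is expected: the whole argument is a bookkeeping exercise in floors and roots of unity, and the verification is essentially forced by the structure of the formulas.
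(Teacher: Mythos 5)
Your proposal is correct and follows essentially the same route as the paper: both are direct verifications of $\partial(J_{a,b,d})=\pi^{\ast}(\Phi_{a,b,d})$, and the floor identity $[\tfrac{Y+Z}{M}]-[\tfrac{Y}{M}]-[\tfrac{Z}{M}]=[\tfrac{Y'+Z'}{M}]$ that you isolate is exactly the step the paper uses implicitly in passing from $\zeta_m^{ai_1((j_1+k_1)-(j_1+k_1)')}/\zeta_m^{ai_1(j_1+k_1-j_1'-k_1')}$ to $\zeta_{\mathbbm{m}}^{ai_1'[\frac{j_1'+k_1'}{\mathbbm{m}}]}$. Your organization into the three multiplicative factors governed by $a$, $b$, $d$ is only a cleaner bookkeeping of the same computation.
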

\begin{proof} Indeed, by direct computation
\begin{eqnarray*}
& &\partial(J_{a,b,d})(g_{1}^{i_1}g_{2}^{i_2},g_{1}^{j_1}g_{2}^{j_2},g_{1}^{k_1}g_{2}^{k_2}) \\
&=&\frac{J(g_{1}^{j_1}g_{2}^{j_2},g_{1}^{k_1}g_{2}^{k_2})
J(g_{1}^{i_1}g_{2}^{i_2},g_{1}^{j_1+k_1}g_{2}^{j_2+k_2})}{J(g_{1}^{i_1+j_1}g_{2}^{i_2+j_2},g_{1}^{k_1}g_{2}^{k_2})J(g_{1}^{i_1}g_{2}^{i_2},g_{1}^{j_1}g_{2}^{j_2})}\\
&=&\frac{\zeta_{m}^{aj_{1}(k_{1}-k_{1}')}\zeta_{\mathbbm{m}\mathbbm{n}}^{bj_{2}(k_{1}-k_{1}')}\zeta_{n}^{dj_{2}(k_{2}-k_{2}'')}
\zeta_{m}^{ai_{1}((j_1+k_{1})-(j_1+k_{1})')}\zeta_{\mathbbm{m}\mathbbm{n}}^{bi_{2}((j_1+k_{1})-(j_1+k_{1})')}\zeta_{n}^{di_{2}((j_2+k_{2})-(j_2+k_{2})'')}}
{\zeta_{m}^{a(i_1+j_{1})(k_{1}-k_{1}')}\zeta_{\mathbbm{m}\mathbbm{n}}^{b(i_2+j_{2})(k_{1}-k_{1}')}\zeta_{n}^{d(i_2+j_{2})(k_{2}-k_{2}'')}
\zeta_{m}^{ai_{1}(j_1-j_1')}\zeta_{\mathbbm{m}\mathbbm{n}}^{bi_{2}(j_1-j_1')}\zeta_{n}^{di_{2}(j_2-j_2'')}}\\
&=&\frac{\zeta_{m}^{ai_{1}((j_1+k_{1})-(j_1+k_{1})')}\zeta_{\mathbbm{m}\mathbbm{n}}^{bi_{2}((j_1+k_{1})-(j_1+k_{1})')}\zeta_{n}^{di_{2}((j_2+k_{2})-(j_2+k_{2})'')}}
{\zeta_{m}^{ai_{1}(j_1+k_{1}-j_1'-k_{1}')}\zeta_{\mathbbm{m}\mathbbm{n}}^{bi_{2}(j_1+k_{1}-j_1'-k_{1}')}\zeta_{n}^{di_{2}(j_2+k_{2}-j_2''-k_{2}'')}}\\
&=&\zeta_{\mathbbm{m}}^{ai_1'[\frac{j_1'+k_1'}{\mathbbm{m}}]}
\zeta_{\mathbbm{n}}^{bi_2''[\frac{j_1'+k_1'}{\mathbbm{m}}]}\zeta_{\mathbbm{n}}^{di_2''[\frac{j_2''+k_2''}{\mathbbm{n}}]}\\
&=&\pi^{\ast}(\Phi_{a,b,d})(g_{1}^{i_1}g_{2}^{i_2},g_{1}^{j_1}g_{2}^{j_2},g_{1}^{k_1}g_{2}^{k_2}).
\end{eqnarray*}
\end{proof}

\section{Generation in degree one}
Throughout this section, $\MM$ is a finite-dimensional connected coradically graded pointed Majid algebra of rank $2$. The aim of this section is to prove that $\MM$ is generated by $\MM_0$ and $\MM_1$. Recall that, as before, $$\MM_{0}=(k\mathbb{G},\Phi)$$
and we can assume that
$$\mathbb{G}=\mathbb{Z}_{\mathbbm{m}}\times \mathbb{Z}_{\mathbbm{n}} = \langle \mathbbm{g}_1 \rangle \times  \langle \mathbbm{g}_2 \rangle $$
with $\mathbbm{m}|\mathbbm{n}$ by Lemma \ref{l2.8}, and
$$\Phi=\Phi_{a,b,d}$$
for some $0\leq a,b\leq \mathbbm{m}-1,\;0\leq d\leq \mathbbm{n}-1$ by Lemma \ref{l3.6}. Thanks to Proposition \ref{p4.5}, we have
$$\MM=\R\# k\mathbb{G}.$$
Note that $\R=\bigoplus_{i\in \mathbb{N}}\R_i$ is also coradically graded. The main result of this section can be stated as follows:

 \begin{proposition}\label{np1} In ${^{\mathbb{G}}_{\mathbb{G}}\mathcal{YD}^{\Phi}}$, we have $$\R\cong \mathscr{B}(\R_1).$$
 \end{proposition}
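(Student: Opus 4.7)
The approach is to reduce the problem, via the resolution $\pi \colon G \to \mathbb{G}$ and the $2$-cochain $J_{a,b,d}$ of Section 3, to the corresponding generation statement for finite-dimensional connected graded Hopf algebras in the ordinary Yetter-Drinfeld category ${^{G}_{G}\mathcal{YD}}$ with $2$-dimensional primitive part of diagonal type; the latter is contained in Heckenberger's classification of rank-two Nichols algebras \cite{h, h2}. First I would observe that the rank-two hypothesis forces $\dim \R_1 = 2$ (there are exactly two arrows from the vertex $1$ in the Gabriel quiver $Q(\MM)$), that Corollary \ref{c3.7} makes $\R_1$ diagonal, and that $\mathscr{B}(\R_1) \subseteq \R$ sits as the braided subalgebra generated by $\R_1$; the claim therefore reduces to showing $\R$ is generated by $\R_1$ as an algebra.

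Next I would inflate $\R$ to a Hopf algebra $\tilde{\R}$ in ${^{G}_{G}\mathcal{YD}^{\pi^{*}(\Phi)}}$ by fixing a set-theoretic section of $\pi$, using it to endow the underlying vector space of $\R$ with a $G$-grading lifting its $\mathbb{G}$-grading, and pulling back the $\mathbb{G}$-action through $\pi$; the projective $G$-cocycles on the graded pieces match since $\widetilde{\pi^{*}(\Phi)}_g = \widetilde{\Phi}_{\pi(g)} \circ (\pi \times \pi)$. By Proposition \ref{l3.8}, $\pi^{*}(\Phi) = \partial(J_{a,b,d})$, so the braided tensor equivalence $\mathcal{F}^{J_{a,b,d}^{-1}} \colon {^{G}_{G}\mathcal{YD}^{\pi^{*}(\Phi)}} \to {^{G}_{G}\mathcal{YD}}$ defined by equations \eqref{ta}--\eqref{tm} transports $\tilde{\R}$ to a finite-dimensional, connected graded, ordinary braided Hopf algebra $\tilde{\R}^{J^{-1}}$ in ${^{G}_{G}\mathcal{YD}}$ whose degree-one component is a $2$-dimensional diagonal Yetter-Drinfeld module over $kG$.

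At this point Heckenberger's result applies: any finite-dimensional connected graded Hopf algebra in ${^{G}_{G}\mathcal{YD}}$ with $2$-dimensional primitive part of diagonal type is a Nichols algebra, so $\tilde{\R}^{J^{-1}} \cong \mathscr{B}(\tilde{\R}^{J^{-1}}_1)$; reversing the $J$-twist via Lemma \ref{tn} and then undoing the inflation yields the desired isomorphism $\R \cong \mathscr{B}(\R_1)$ in ${^{\mathbb{G}}_{\mathbb{G}}\mathcal{YD}^{\Phi}}$. The main obstacle is the inflation step: a set-theoretic section of $\pi$ is not a group homomorphism, so strict multiplicativity of the $G$-grading on $\tilde{\R}$ fails, and the failure is measured precisely by the coboundary data of Proposition \ref{l3.8}; it is exactly the accompanying twist $J_{a,b,d}$ that makes the inflation into a well-defined Hopf algebra in ${^{G}_{G}\mathcal{YD}^{\pi^{*}(\Phi)}}$. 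A secondary but routine point is that both the inflation and the $J$-twist preserve the property ``generated in degree one'' and commute with the formation of Nichols algebras, so the property transfers cleanly in either direction.
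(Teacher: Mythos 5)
Your overall strategy --- transport the problem along the resolution $\pi\colon G\to\mathbbm{G}$ and the cochain $J_{a,b,d}$ into the ordinary category ${^{G}_{G}\mathcal{YD}}$ and then quote the known rank-two result --- is indeed the strategy of the paper, but your execution has a genuine gap at the inflation step, and that step is precisely the point the paper works hardest to get around. You propose to inflate the \emph{whole} Hopf algebra $\R$ to a Hopf algebra $\tilde\R$ in ${^{G}_{G}\mathcal{YD}^{\pi^{*}(\Phi)}}$ by assigning $G$-degree $\iota(g)$ to the component ${^{g}\R}$. But if $X\in{^{g}\R}$ and $Y\in{^{h}\R}$ then $XY\in{^{gh}\R}$, so $\tilde X\tilde Y$ receives $G$-degree $\iota(gh)$, whereas multiplicativity in ${^{G}_{G}\mathcal{YD}^{\pi^{*}(\Phi)}}$ requires degree $\iota(g)\iota(h)$; since $\iota$ is not a group homomorphism these differ (by an element of $\ker\pi$). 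This is a mismatch of \emph{comodule degrees}, not of scalars, so it cannot be repaired by the $2$-cochain $J_{a,b,d}$: twisting by $J$ rescales products within a fixed category and never alters the $G$-grading, and the coboundary identity of Proposition \ref{l3.8} concerns associators only. Equivalently, the inflation $M\mapsto\widetilde M$ is a functor on objects but is not monoidal, so it does not carry Hopf algebras to Hopf algebras. The paper avoids this by never inflating $\R$ itself: it inflates only the single object $\R_1$ and compares the two Nichols algebras $\mathscr{B}(\R_1)$ and $\mathscr{B}(\widetilde{\R_1})$ built separately in the two categories, using the finer $\Z^{2}$-grading of Lemmas \ref{nl1} and \ref{nl2}. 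That device is available exactly because everything in sight is generated in degree one --- which is what one is trying to prove about $\R$ --- so your inflation of all of $\R$ is circular as well as ill-defined.

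Two further points. First, the theorem you invoke is not Heckenberger's: his classification determines for which diagonal braidings $\mathscr{B}(V)$ is finite-dimensional, whereas the statement that a finite-dimensional connected graded braided Hopf algebra of diagonal type with primitives concentrated in degree one \emph{is} a Nichols algebra is Angiono's theorem, and the paper must re-prove a twisted analogue of it (Proposition \ref{np2}, resting on Lemmas \ref{nl5}--\ref{nl7}) rather than merely cite it, precisely because of the transfer obstruction above. Second, your outline silently absorbs the duality step: Proposition \ref{np2} only treats algebras \emph{generated} in degree one, and the passage from ``$P(\R)=\R_1$'' to ``$\R$ is generated by $\R_1$'' is carried out in the paper by the dual-Hopf-algebra argument of Lemma \ref{nl8} (apply Proposition \ref{np2} to $\R^{*}$, then return via ${^{*}(\R^{*})}\cong\R$), which appears nowhere in your proposal. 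A correct write-up needs both the $\Z^{2}$-grading device and this duality loop.
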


The proof of this proposition is divided into three steps and each of them appears as a subsection.

 \subsection{The $\mathbbm{Z}^{l}$-grading of $\mathscr{B}(V)$}
In this subsection, $G$ stands for an arbitrary finite abelian group and $\Phi$ a $3$-cocycle on $G.$ Let $V$ be a diagonal Yetter-Drinfeld module in ${_G^G \mathcal{YD}^\Phi}$ with dimension $l$. Let $V=\bigoplus_{i=1}^{l}kX_i$ be a decomposition of $V$ into the direct sum of $1$-dimensional Yetter-Drinfeld modules. Let $\Z^l$ be the free abelian group of rank $l$ and $e_i (1\leq i\leq l)$ be the canonical generators of $\Z^l$.

\begin{lemma}\label{nl1}
There is a $\Z^{l}$-grading on the Nichols algebra $\mathscr{B}(V)\in {_G^G \mathcal{YD}^\Phi}$ by setting $\deg X_i=e_i$.
\end{lemma}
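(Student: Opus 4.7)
The plan is to upgrade the obvious $\N$-grading on the tensor algebra to a $\Z^l$-grading and then show that the Hopf ideal defining $\mathscr{B}(V)$ is compatible with it.

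First I would put the grading on $T_\Phi(V)$. Declare $\deg X_i = e_i$ and extend multiplicatively, so the monomial $X_{i_1}\otimes\cdots\otimes X_{i_n}$ has $\Z^l$-degree $e_{i_1}+\cdots+e_{i_n}$. Because $V$ is diagonal, each $X_i$ is simultaneously a $G$-coweight vector (living in some ${^{g_i}V}$) and a $G$-weight vector for every element of $G$. Consequently both the $G$-coaction $\delta_L$ and the $G$-action $\triangleright$ preserve the $\Z^l$-grading on $T_\Phi(V)$, so $T_\Phi(V)$ decomposes as $\bigoplus_{\alpha\in\Z^l} T_\Phi(V)_\alpha$ inside ${^{G}_{G}\mathcal{YD}^{\Phi}}$.

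Next I would verify that the braided Hopf structure respects this refined grading. The braiding $c(X_i\otimes X_j)= (g_i\triangleright X_j)\otimes X_i$ is a scalar multiple of $X_j\otimes X_i$, which carries the same $\Z^l$-degree $e_i+e_j$; hence $c$ is $\Z^l$-graded on $V\otimes V$, and by the inductive construction of braidings on tensor powers it is $\Z^l$-graded on all of $T_\Phi(V)\otimes T_\Phi(V)$. The multiplication in the braided tensor product $T_\Phi(V)\otimes T_\Phi(V)$, being built from $c$ and the concatenation, is also $\Z^l$-graded. The coproduct is determined by $\Delta(X_i)=X_i\otimes 1+1\otimes X_i$, whose two summands both have $\Z^l$-degree $e_i$; extending multiplicatively (and using the braided tensor product coalgebra structure, which is $\Z^l$-graded by the same argument) shows that $\Delta$ preserves the $\Z^l$-grading.

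Finally I would show the defining ideal is $\Z^l$-homogeneous. The cleanest route is to use the characterization of $\mathscr{B}(V)$ as $\bigoplus_{n\ge 0} V^{\otimes n}/\ker\mathfrak{S}_n$, where $\mathfrak{S}_n$ is the total quantum symmetrizer built from the braiding $c$. Since $c$ preserves the $\Z^l$-grading on each $V^{\otimes n}$, so does $\mathfrak{S}_n$, hence $\ker\mathfrak{S}_n$ is a $\Z^l$-graded subspace of $V^{\otimes n}$. Therefore the ideal $I=\bigoplus_n\ker\mathfrak{S}_n$ is $\Z^l$-graded, and the quotient $\mathscr{B}(V)=T_\Phi(V)/I$ inherits a well-defined $\Z^l$-grading with $\deg X_i=e_i$. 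The main obstacle to keep an eye on is ensuring that the $\Z^l$-grading truly sits inside the braided category (i.e., is compatible with $\delta_L$ and $\triangleright$), which is exactly where the diagonality hypothesis on $V$ is used; once that is in place, the argument is otherwise formal.
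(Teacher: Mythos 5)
Your argument is correct, but it settles the decisive point by a genuinely different route than the paper. Both proofs begin identically: put the $\Z^l$-grading on $T_\Phi(V)$ by $\deg X_i=e_i$ and observe that, by diagonality, the coaction, the $G$-action, the braiding and hence the whole braided Hopf structure preserve this grading (the paper leaves most of this implicit, using only that $\Delta$ is $\Z^l$-graded). Where you diverge is in showing that the defining ideal is $\Z^l$-homogeneous: you invoke the quantum-symmetrizer description $I=\bigoplus_n\ker\mathfrak{S}_n$ and note that $\mathfrak{S}_n$ is built from the graded map $c$. The paper instead works directly with its own definition of $I$ as the \emph{maximal} graded Hopf ideal generated in degrees $\geq 2$ and proves gradedness by induction on the $\N$-degree: a minimal-degree element of $I$ is primitive, its $\Z^l$-homogeneous components are again primitive since $\Delta$ preserves $\Z^l$-degrees, and maximality forces each component back into $I$; in the inductive step the ideal $I+\langle X^i\rangle$ supplies the contradiction. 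Your route is cleaner once the symmetrizer characterization is granted, but be aware that the paper defines Nichols algebras only via the universal property, so you are implicitly importing the equivalence of the two definitions into the non-strict category ${^G_G\mathcal{YD}^{\Phi}}$, where $\mathfrak{S}_n$ must be written with the appropriate associators inserted; this equivalence is standard and categorical, so it is not a gap, but it is an extra input the paper avoids. The paper's induction has the further advantage that its key byproduct (homogeneous components of elements of $I$ of minimal degree are primitive) is reused verbatim in the proof of Proposition \ref{np2}.
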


\begin{proof}
Obviously, there is a $\Z^{l}$-grading on the tensor algebra $T_{\Phi}(V)\in {_G^G \mathcal{YD}^\Phi}$ by assigning $\deg X_i=e_i$. Let $I=\oplus_{i\geq i_0} I_i$ be the maximal graded Hopf ideal generated by homogeneous elements of degree greater than or equal to $2.$ To prove that $\mathscr{B}(V)$ is $\Z^l$-graded, it amounts to prove that $I$ is $\Z^l$-graded. This will be done by induction on the $\N$-degree.

First let $X\in I$ be a homogenous element with minimal degree $i_0.$ Since $\Delta(X)\in T(V)\otimes I+ I\otimes T(V),$ $X$ must be a primitive element, i.e., $\Delta(X)=X\otimes 1+1\otimes X.$
Suppose $X=X^1+X^2+\cdots +X^n,$ where $X^i$ is $\Z^l$-homogenous, and $X^i$ and $X^j$ have different $\Z^l$-degrees if $i\neq j.$ Suppose $\Delta(X^i)=X^i\otimes 1+1\otimes X^i+(X^i)_1\otimes (X^i)_2.$ Then we have  $\sum_{i=1}^n(X^i)_1\otimes(X^i)_2=0$. This forces $(X^i)_1\otimes (X^i)_2 = 0$ for each $1\leq i\leq l$ since $\Delta$ preserves the $\Z^l$-degrees. So, each $X^i$ is a primitive element and hence must be contained in $I$ by the maximality of $I.$ Therefore, $I_{i_0}$ is $\Z^l$-graded.

Then suppose that $I^k:=\oplus_{i_0\leq i\leq k} I_i$ is $\Z^l$-graded. We shall prove that $I^{k+1}=\oplus_{i_0\leq i\leq k+1} I_i$ is also $\Z^l$-graded. Let $X\in I_{k+1}$ and $X=X^1+X^2+\cdots +X^n,$ with each $X^i$ being $\Z^l$-homogenous and $X^i$ and $X^j$ having different $\Z^l$-degrees if $i\neq j.$ Write $\Delta(X^i)=X^i\otimes 1+1\otimes X^i+(X^i)_1\otimes (X^i)_2.$ Since $\Delta(X)=X\otimes 1+1\otimes X+(X)_1\otimes (X)_2,$ where $(X)_1\otimes (X)_2\in T(V)\otimes I^k+I^k\otimes T(V),$ i.e., $\sum (X^i)_1\otimes (X^i)_2\in T(V)\otimes I^k+I^k\otimes T(V).$ According to the inductive assumption, $T(V)\otimes I^k+I^k\otimes T(V)$ is a $\Z^l$-graded space. So each $(X^i)_1\otimes (X^i)_2\in T(V)\otimes I^k+I^k\otimes T(V)$ as $\Delta$ preserves $\Z^l$-degrees. If there was an $X^i\notin I_{k+1},$ then $I+\langle X^i\rangle$ is a Hopf ideal properly containing $I,$ which contradicts to the maximality of $I.$ It follows that $X^i\in I_{k+1}$ for all $1\leq i\leq n$ and hence $I^{k+1}$ is also $\Z^l$-graded by the assumption on $X.$ We complete the proof of the lemma.
\end{proof}

Now return to our Majid algebra $\MM$. Since it is assumed of
rank $2$, $\dim\R_1=2$. By Corollary \ref{c3.7}, $\R_1$ is a diagonal Yetter-Drinfeld module over $(k\mathbb{G},\Phi).$
Therefore, we may write \[ \R_1=V_1 \oplus V_2 = k X_1 \oplus k X_2 \] as the direct sum of two $1$-dimensional Yetter-Drinfeld modules. As in Section 3, consider a bigger abelian group $G=\mathbb{Z}_{m}\times \mathbb{Z}_n = \langle g_1 \rangle \times  \langle g_2 \rangle$
with $m=\mathbbm{m}^2, n=\mathbbm{n}^2$ and the canonical epimorphism:
$$\pi:\;kG\to k\mathbbm{G},\;\;\;\;g_{1}\mapsto \mathbbm{g}_{1},\;g_{2}\mapsto \mathbbm{g}_{2}.$$
Observe that $\pi$ has a section
$$\iota:\;k\mathbbm{G}\to kG,\;\;\;\;\mathbbm{g}^{i}_{1}\mathbbm{g}^{j}_{2}\mapsto {g}^{i}_{1}g_{2}^j$$
which is not a group morphism.
Let $\delta_L$ and $\triangleright$ be the comodule and module structure maps of $\R\in {_\mathbbm{G}^\mathbbm{G} \mathcal{YD}^\Phi}$. Define
\begin{eqnarray*}
&&\rho_{L}:\;\R_1 \to kG\otimes \R_1,\;\;\;\;\rho_{L}=(\iota\otimes \id)\delta_{L}\\
&&\blacktriangleright:\;kG\otimes \R_1\to \R_1,\;\;\;\;g\blacktriangleright Z=\pi(g)\triangleright Z
\end{eqnarray*}
for all $g\in G$ and $Z\in \R_1$. Through this way, $\R_1\in {_G^G \mathcal{YD}^{\pi^*(\Phi)}}$ and this can be verified by direct computation:
\begin{eqnarray*}
e\blacktriangleright(f\blacktriangleright Z)&=&\pi(e)\triangleright(\pi(f)\triangleright Z)\\
&=&\tilde{\Phi}_{z}(\pi(e),\pi(f))(\pi(e)\pi(f))\triangleright Z\\
&=&\widetilde{\pi^{\ast}(\Phi)}_{\iota(z)}(e,f)ef\blacktriangleright Z
\end{eqnarray*}
for all $e,f\in G$ and $\delta_{L}(Z)=z\otimes Z$ for $Z\in \R_1$. We denote this new Yetter-Drinfeld module by $\widetilde{\R_1}$ in order to distinguish from the original one $\R_1 \in {_\mathbbm{G}^\mathbbm{G} \mathcal{YD}^\Phi}.$ From these, we have two essentially identical Nichols algebras $\mathscr{B}(\R_1)\in {_\mathbbm{G}^\mathbbm{G} \mathcal{YD}^\Phi}$ and $\mathscr{B}(\widetilde{\R_1})\in {_G^{G} \mathcal{YD}^{\pi^{*}(\Phi)}}$ which however live in different environment.

\begin{lemma}\label{nl2}
There is a linear isomorphism $F:\mathscr{B}(\R_1)\To \mathscr{B}(\widetilde{\R_1})$ which preserves the product and coproduct of these two algebras.
\end{lemma}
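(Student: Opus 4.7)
The plan is to define $F$ as the natural identification of underlying graded vector spaces and to verify that all the structure constants involved in the braided product and coproduct coincide under this identification, by exploiting the relations $\pi\circ\iota=\id_\mathbbm{G}$ and $\pi^*(\Phi)(\iota(\cdot),\iota(\cdot),\iota(\cdot))=\Phi(\cdot,\cdot,\cdot)$.

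First I would observe that $\R_1$ and $\widetilde{\R_1}$ share the same underlying vector space with basis $\{X_1,X_2\}$, and that their braidings in the respective categories actually coincide. Indeed, write $\delta_L(X_i)=\mathbbm{x}_i\otimes X_i$ and $\mathbbm{y}\triangleright X_j=\chi_j(\mathbbm{y})X_j$, so that the braiding in $_\mathbbm{G}^\mathbbm{G}\mathcal{YD}^\Phi$ reads $c(X_i\otimes X_j)=\chi_j(\mathbbm{x}_i)X_j\otimes X_i$. The corresponding braiding in $_G^G\mathcal{YD}^{\pi^*(\Phi)}$ is then $c(X_i\otimes X_j)=\iota(\mathbbm{x}_i)\blacktriangleright X_j\otimes X_i=\chi_j(\pi\iota(\mathbbm{x}_i))X_j\otimes X_i=\chi_j(\mathbbm{x}_i)X_j\otimes X_i$, identical to the previous one. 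Likewise the associator factors satisfy $\pi^*(\Phi)(\iota(\mathbbm{x}_i),\iota(\mathbbm{x}_j),\iota(\mathbbm{x}_k))=\Phi(\mathbbm{x}_i,\mathbbm{x}_j,\mathbbm{x}_k)$ by definition of the pullback.

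Next I would define $F:T_\Phi(\R_1)\to T_{\pi^*(\Phi)}(\widetilde{\R_1})$ on the tensor algebras as the identity on each graded component $V^{\otimes\vec n}$. The multiplication of a braided tensor algebra in a twisted Yetter-Drinfeld category is built from exactly three ingredients: the tensor product of morphisms, the braiding $c$, and the associator $a$. By the preceding step, both $c$ and $a$ agree on generators and hence on all bracketed monomials. Therefore the structure constants of the multiplication on each homogeneous piece are identical on the two sides, so $F$ is multiplicative. The coproduct on $T_\Phi(\R_1)$ (resp.\ $T_{\pi^*(\Phi)}(\widetilde{\R_1})$) is uniquely determined by the primitivity of $X_1, X_2$ together with the braided bialgebra axiom, which again involves only $c$ and $a$; the same argument shows that $F$ is comultiplicative.

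Finally, since $\mathscr{B}(V)=T(V)/I$ where $I$ is the unique maximal graded Hopf ideal generated in degrees $\geq 2$, and $F$ preserves both $\cdot$ and $\Delta$, it must send the defining ideal $I\subset T_\Phi(\R_1)$ onto the corresponding defining ideal $\widetilde I\subset T_{\pi^*(\Phi)}(\widetilde{\R_1})$. Thus $F$ descends to the desired linear isomorphism $\mathscr{B}(\R_1)\to\mathscr{B}(\widetilde{\R_1})$ preserving product and coproduct. The main obstacle is purely bookkeeping: carefully unwinding the explicit formulas for the braided product and coproduct in both twisted YD categories and tracking that every $\Phi$-factor and every $\triangleright$-action pairs up with its $\pi^*(\Phi)$- and $\blacktriangleright$-counterpart. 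Conceptually, however, $\widetilde{\R_1}$ is literally $\R_1$ with its structure maps composed with $\iota$ and $\pi$, so no new data arises and the identification must be compatible with everything.
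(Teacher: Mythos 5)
Your construction of $F$ and the verification that it preserves the product and coproduct of the tensor algebras is exactly the paper's route: the structure constants of the braiding and the associator transfer because $\pi$ is a homomorphism with $\pi\circ\iota=\id_{\mathbbm{G}}$, so $\pi^*(\Phi)$ and $\blacktriangleright$ evaluated on the $G$-degrees of monomials reproduce $\Phi$ and $\triangleright$ evaluated on the corresponding $\mathbbm{G}$-degrees. That part is fine.

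The gap is in your last step, where you assert that because $F$ preserves $\cdot$ and $\Delta$ it ``must send'' the maximal Hopf ideal $I$ of $T_{\Phi}(\R_1)$ onto the maximal Hopf ideal $\widetilde I$ of $T_{\pi^*(\Phi)}(\widetilde{\R_1})$. A Hopf ideal in a braided category is required to be a subobject of that category, i.e.\ a Yetter--Drinfeld submodule, and the two tensor algebras live in \emph{different} categories, ${}^{\mathbbm{G}}_{\mathbbm{G}}\mathcal{YD}^{\Phi}$ and ${}^{G}_{G}\mathcal{YD}^{\pi^*(\Phi)}$. Since $F$ is not a functor between these categories, knowing that $F(I)$ is an ideal and a coideal does not yet make it a Hopf ideal on the target side, so the maximality of $\widetilde I$ cannot be invoked directly; note in particular that the $\mathbbm{G}$-grading and the $G$-grading of monomials genuinely differ because $\iota$ is not a group homomorphism (e.g.\ $X^{\mathbbm{m}}$ can have trivial $\mathbbm{G}$-degree but nontrivial $G$-degree). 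This is precisely why the paper first proves Lemma \ref{nl1}: both maximal Hopf ideals are $\Z^2$-graded, a $\Z^2$-graded subspace is automatically a Yetter--Drinfeld submodule on \emph{both} sides (the module and comodule structures are diagonal and determined by the $\Z^2$-degree), and $F$ preserves $\Z^2$-degrees, so $F$ induces a bijection between $\Z^2$-graded Hopf ideals and in particular matches the maximal ones. Your argument needs this (or an equivalent device, such as identifying both ideals with the kernel of the quantum symmetrizer) to close; as written, the identification $F(I)=\widetilde I$ is unjustified.
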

\begin{proof}
Let $F: T_{\Phi}(\R_1)\to T_{\pi^{*}(\Phi)}(\widetilde{\R_1})$ be the multiplicative linear map which preserves $\R_1,$ i.e., $F|_{\R_1}=\id_{\R_1}.$ It is easy to show that $F$ also preserves the comultiplication of $T_{\Phi}(\R_1)$ and $T_{\pi^{*}(\Phi)}(\widetilde{\R_1}).$
Note that $F$ induces a one to one correspondence between the set of $\Z^2$-graded Hopf ideals of $T_{\Phi}(\R_1)$ and that of $T_{\pi^{*}(\Phi)}(\widetilde{\R_1}).$ By Lemma \ref{nl1}, we know that the maximal Hopf ideals generated by homogeneous elements of degree $\geq 2$ in $T_{\Phi}(\R_1)$ and in $T_{\pi^{*}(\Phi)}(\widetilde{\R_1})$ are $\Z^2$-graded. It is obvious that $F$ maps the maximal Hopf ideal of $T_{\Phi}(\R_1)$ to that of $T_{\pi^{*}(\Phi)}(\widetilde{\R_1}).$ Therefore, $F$ induces a linear isomorphism from $\mathscr{B}(\R_1)$ to $\mathscr{B}(\widetilde{\R_1}),$ which preserves multiplication and comultiplication.
\end{proof}

\begin{remark}
Via this isomorphism, we also view $\mathscr{B}(\R_1)$ as a Nichols algebra in ${_G^{G} \mathcal{YD}^{\pi^{*}(\Phi)}}$. In particular, $\mathscr{B}(\R_1)$  is a Yetter-Drinfeld module over $(kG, \pi^{*}(\Phi))$.
\end{remark}

\subsection{Twisted version of ordinary Nichols algebras}
Again, first suppose $G$ is an arbitrary finite abelian group. Let $V$ be a diagonal Yetter-Drinfeld module in ${^G_G\mathcal{YD}}$ and let $V=\bigoplus_{i=1}^{N}kX_i$ be a decomposition of $V$ into $1$-dimensional Yetter-Drinfeld modules. We use $\delta_L$ and $\triangleright$ to denote the comodule and module structure maps of $V$. Then there are $g_i\in G$ and $q_{ij}\in k^*$ such that $\delta_L(X_i)=g_{i} \otimes X_i$ and $g_{i}\triangleright X_j=q_{ij}X_j.$ Let $J$ be a $2$-cochain on $G$ and let $\Phi$ denote its differential $\partial(J).$ Recall that $\mathscr{B}(V)^{J}$ is defined in Subsection 2.5 and we have $\mathscr{B}(V)^J \cong \mathscr{B}(V^J)$ as Nichols algebras in ${_G^G \mathcal{YD}^{\partial(J)}}.$

Now assume that $\mathscr{B}(V)\in \ _G^G \mathcal{YD}$ is  finite-dimensional,  then $\mathscr{B}(V)=T(V)/I$ where $I$ is the Hopf ideal of $T(V)$ generated by the polynomials listed in \cite[Theorem 3.1]{Ang}. In the following, let $\mathbf{S}$ denote the set of these polynomials. Preserve the notations of Subsection 2.5. Define a map $\Psi: T_{\Phi}(V^J)\to T(V)$ by
\begin{equation} \label{ne1}
\Psi((\cdots ((Y_1\circ Y_2)\circ Y_3)\cdots Y_n))=\prod_{i=1}^{n-1}J(Y_1\cdots Y_i,Y_{i+1})Y_1Y_2\cdots Y_n
\end{equation} for all $Y_i\in \{X_1,X_2,\ldots,X_N\}.$ It is easy to see that $\Psi$ is an isomorphism of linear spaces.

\begin{lemma}\label{nl5}
The set $\Psi^{-1}(\mathbf{S})$ is a minimal set of defining relations of $\mathscr{B}(V)^J.$
\end{lemma}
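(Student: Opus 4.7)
The plan is to transport the known finite presentation $\mathscr{B}(V)=T(V)/\langle \mathbf{S}\rangle$ across the linear isomorphism $\Psi$ to obtain a presentation of $\mathscr{B}(V^J)$, and then invoke the identification $\mathscr{B}(V)^J \cong \mathscr{B}(V^J)$ supplied by Lemma \ref{tn}. The formula \eqref{ne1} shows that $\Psi$ rescales each left-normed $\circ$-monomial $Y_1\circ Y_2\circ\cdots\circ Y_n$ by a nonzero scalar to obtain the corresponding tensor monomial $Y_1Y_2\cdots Y_n$ in $T(V)$; hence $\Psi$ is a bijection of standard bases (up to nonzero scalars) that preserves the $\Z^N$-grading induced by the generators $X_1,\ldots,X_N$.

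The central step is to verify that $\Psi^{-1}$ carries the two-sided ideal $I=\langle\mathbf{S}\rangle\subset T(V)$ to the two-sided $\circ$-ideal of $T_\Phi(V^J)$ generated by $\Psi^{-1}(\mathbf{S})$. For this I would compute directly from $Y\circ Y'=J(y,y')YY'$ and \eqref{ne1} that for each $Y\in V^J$ and each monomial $w=Y_1\cdots Y_n\in T(V)$ one has
\[
Y\circ \Psi^{-1}(w)=\lambda(Y,w)\,\Psi^{-1}(Yw),\qquad \Psi^{-1}(w)\circ Y=\mu(Y,w)\,\Psi^{-1}(wY),
\]
with explicit nonzero scalars $\lambda(Y,w),\mu(Y,w)\in k^{*}$, the cocycle identity $\partial(J)=\Phi$ being exactly what is needed to reconcile successive rebracketings of $\circ$-products. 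Extending linearly, the stability of $I$ under multiplication by $V$ translates into the stability of $\Psi^{-1}(I)$ under $\circ$-multiplication by $V^J$; so $\Psi^{-1}(I)$ is a two-sided ideal of $T_\Phi(V^J)$, and being $\Z^N$-graded with the same graded dimension as $I$, it must coincide with $\langle \Psi^{-1}(\mathbf{S})\rangle$.

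To identify this ideal with the defining ideal of $\mathscr{B}(V^J)$, I would observe that $T_\Phi(V^J)/\Psi^{-1}(I)$ has the same graded Hilbert series as $T(V)/I=\mathscr{B}(V)$ by the $\Z^N$-graded bijection $\Psi$, while Lemma \ref{tn} says $\mathscr{B}(V^J)\cong\mathscr{B}(V)^J$, which shares its underlying graded vector space with $\mathscr{B}(V)$. A dimension count then forces $\Psi^{-1}(I)$ to be precisely the defining ideal of $\mathscr{B}(V^J)$, so that $\Psi^{-1}(\mathbf{S})$ generates it. Minimality is automatic from minimality of $\mathbf{S}$: any proper subset of $\Psi^{-1}(\mathbf{S})$ which still generated $\Psi^{-1}(I)$ as a $\circ$-ideal would produce, via $\Psi$, a proper subset of $\mathbf{S}$ generating $I$ as an ordinary ideal in $T(V)$, contradicting the minimality recorded in \cite[Theorem 3.1]{Ang}.

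The main obstacle is the ideal-transport bookkeeping in the second paragraph: one needs to track carefully the scalars $\lambda(Y,w),\mu(Y,w)$ and the rebracketings that enter through the associator $\Phi=\partial(J)$ of the braided category $^{G}_{G}\mathcal{YD}^{\partial(J)}$, in order to justify that a $\cdot$-ideal of $T(V)$ pulls back to a $\circ$-ideal of $T_\Phi(V^J)$. Once this point is settled, the remainder of the argument is a pure transport of structure through the graded linear bijection $\Psi$.
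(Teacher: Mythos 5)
Your proposal is correct and follows essentially the same route as the paper: the heart of both arguments is the identity $\Psi(E\circ F)=J(e,f)\,\Psi(E)\Psi(F)$ (your scalars $\lambda,\mu$ are the special case where one factor is a generator, and the paper proves the general identity by induction on length), which lets one transport the ideal $\langle\mathbf{S}\rangle$ across $\Psi$, identify the quotient with $\mathscr{B}(V)^J\cong\mathscr{B}(V^J)$ via Lemma \ref{tn}, and inherit minimality from that of $\mathbf{S}$. The only cosmetic difference is that where you invoke a Hilbert-series comparison to pin down the defining ideal (which strictly needs the easy inclusion $\langle\Psi^{-1}(\mathbf{S})\rangle\subseteq\ker\bigl(T_\Phi(V^J)\to\mathscr{B}(V^J)\bigr)$ to be noted), the paper instead exhibits the induced map $\overline{\Psi}$ directly as the twist isomorphism onto $\mathscr{B}(V)$.
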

\begin{proof}
We claim that
\begin{equation} \label{ne2}
\Psi(E\circ F)=J(e,f)\Psi(E)\Psi(F)
\end{equation} for any homogeneous $E,F\in T_{\Phi}(V^J).$
We prove this by induction on the length of $F.$

If the length of $F$ is $1,$ i.e., $F\in\{X_1,X_2,\ldots,X_N\},$ then \eqref{ne2} is just a special case of \eqref{ne1}. Next assume \eqref{ne2} holds for all $F$ of length $<n.$ If $F$ is of length $n,$ then we can write $F=F'X_i$ for some $1\le i\le N.$ And we have
\begin{eqnarray*}
\Psi(E\circ F)&=&\Psi((E\circ (F'\circ X_i))\\
             &=&\Phi^{-1}(e,f',x_i)\Psi((E\circ F')\circ X_i)\\
             &=&\Phi^{-1}(e,f',x_i)J(ef',x_i)\Psi(E\circ F')\Psi(X_i)\\
             &=&\Phi^{-1}(e,f',x_i)J(ef',x_i)J(e,f')\Psi(E)\Psi(F')\Psi(X_i)\\
             &=&\Phi^{-1}(e,f',x_i)J(ef',x_i)J(e,f')J^{-1}(f',x_i)\Psi(E)\Psi(F'\circ X_i)\\
             &=&J(e,f)\Psi(E)\Psi(F).
\end{eqnarray*}

Now with \eqref{ne2}, it is clear that $T_{\Phi}(V^J)$ is exactly $T(V)^J.$
Let $I'$ be the ideal generated by $\Psi^{-1}(\mathbf{S}),$
then one can easily show that $\Psi$ induces an isomorphism $\overline{\Psi}:T_{\Phi}(V^J)/I'\to \mathscr{B}(V)$ such that
\begin{equation}
\overline{\Psi}(E\circ F)=J(e,f)\overline{\Psi}(E)\overline{\Psi}(F)
\end{equation} for any homogeneous $E,F\in T_{\Phi}(V^J)/I.$ So $T_{\Phi}(V^J)/I'$ is actually $\mathscr{B}(V)^J.$ By Lemma \ref{tn}, we see that $T_{\Phi}(V^J)/I'$ is a Nichols algebra in $_G^G \mathcal{YD}^{\Phi}.$
The minimality of $\Psi^{-1}(\mathbf{S})$ follows from that of $\mathbf{S}.$
\end{proof}

\begin{lemma}\label{nl6}
Let $Z$ be a polynomial in $\mathbf{S},$ then we have $\mathscr{B}(V^J\oplus k\Psi^{-1}(Z))\cong \mathscr{B}(V\oplus kZ)^J.$
\end{lemma}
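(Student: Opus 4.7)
The plan is to reduce the claim to Lemma \ref{tn} by exhibiting an isomorphism of Yetter-Drinfeld modules $V^J\oplus k\Psi^{-1}(Z)\cong (V\oplus kZ)^J$ in $_G^G\mathcal{YD}^\Phi$. Since $Z\in\mathbf{S}\subset T(V)$ is $G$-homogeneous, say of degree $g_Z$ and $G$-character $\chi_Z$ (the common character of all its monomials under the diagonal action), the line $kZ$ is naturally a one-dimensional object of $_G^G\mathcal{YD}$, and hence $V\oplus kZ\in {_G^G\mathcal{YD}}$.

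Next, apply Lemma \ref{tn} with $V$ replaced by $W:=V\oplus kZ$ to obtain
$$\mathscr{B}(V\oplus kZ)^J\;\cong\;\mathscr{B}\bigl((V\oplus kZ)^J\bigr)$$
as Nichols algebras in $_G^G\mathcal{YD}^\Phi$. Since the twist functor $\mathcal{F}^J$ alters only the $G$-action and does so componentwise (formula \eqref{ta}), it commutes with direct sums, so $(V\oplus kZ)^J=V^J\oplus (kZ)^J$ in $_G^G\mathcal{YD}^\Phi$. It therefore suffices to prove $(kZ)^J\cong k\Psi^{-1}(Z)$ as one-dimensional Yetter-Drinfeld modules in $_G^G\mathcal{YD}^\Phi$.

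For this, one observes that by the definition \eqref{ne1}, $\Psi^{-1}(Z)$ is a nonzero $G$-homogeneous element of $T_\Phi(V^J)$ of degree $g_Z$, so the comodule structures on $k\Psi^{-1}(Z)$ and $(kZ)^J$ already agree. The $G$-action on $\Psi^{-1}(Z)\in T_\Phi(V^J)$ is obtained by iterating the braided tensor product action in $_G^G\mathcal{YD}^\Phi$, which introduces 2-cocycle factors $\widetilde{\Phi}_g$ together with the twisted characters $\chi_i^J(g)=J(g,g_i)J(g_i,g)^{-1}\chi_i(g)$. Using $\Phi=\partial(J)$, these factors telescope to show that every braided monomial appearing in $\Psi^{-1}(Z)$ is an eigenvector for $g$ with the same eigenvalue $J(g,g_Z)J(g_Z,g)^{-1}\chi_Z(g)$, which is precisely the twisted character of $(kZ)^J$. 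Hence $k\Psi^{-1}(Z)$ realizes a copy of $(kZ)^J$ inside $T_\Phi(V^J)$, and the functoriality of $\mathscr{B}(-)$ yields the desired isomorphism.

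The main obstacle is exactly this telescoping in the last step: one must show that, after the weighted summation of braided monomials prescribed by \eqref{ne1}, the accumulated $\widetilde{\Phi}_g$-corrections coming from different arrangements of the $X_i$'s collapse to one universal character independent of the monomial. A cleaner alternative is to extend $\Psi$ itself, mimicking the construction in Lemma \ref{nl5}, to a linear isomorphism $\Psi':T_\Phi\bigl((V\oplus kZ)^J\bigr)\to T(V\oplus kZ)$ that intertwines the $\circ$-products up to $J$-scaling and restricts to the identity on generators; the YD identification $(kZ)^J\cong k\Psi^{-1}(Z)$ then follows formally from the very construction of $\Psi'$, bypassing any explicit cocycle bookkeeping.
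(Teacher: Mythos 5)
Your proposal is correct and follows essentially the same route as the paper: the paper's proof simply reduces the claim to the isomorphism $(V\oplus kZ)^J\cong V^J\oplus k\Psi^{-1}(Z)$ in $_G^G\mathcal{YD}^{\Phi}$ (invoking Lemma \ref{tn}) and declares this ``clear.'' You have merely supplied the cocycle bookkeeping that the paper omits, and your telescoping claim does hold because $\widetilde{\Phi}_g(e,f)=\beta_g(ef)/(\beta_g(e)\beta_g(f))$ with $\beta_g(e)=J(g,e)/J(e,g)$ when $\Phi=\partial(J)$ and $G$ is abelian.
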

\begin{proof}
It suffices to prove that $(V\oplus kZ)^J\cong V^J\oplus k\Psi^{-1}(Z)$ as objects in ${_G^G \mathcal{YD}^{\Phi}}.$ But this is clear.
\end{proof}

The following lemma is a summary of the important results in \cite[Section 4]{Ang}, which is also crucial for the present paper.
\begin{lemma}\label{nl7}
Let $\mathscr{B}(V)\in \ _G^G\mathcal{YD}$ be a finite-dimensional Nichols algebra of diagonal type. Suppose that $Z$ is a polynomial in $\mathbf{S}$ and $U=V\oplus kZ,$ then $\mathscr{B}(U)$ is infinite-dimensional.
\end{lemma}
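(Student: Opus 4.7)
The plan is to reduce this to the structure theory of Nichols algebras of diagonal type developed by Heckenberger and Angiono, exploiting the minimality of the defining relations $\mathbf{S}$: no relation in $\mathbf{S}$ can be dropped without losing finite-dimensionality of $\mathscr{B}(V).$

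First I would endow $U = V \oplus kZ$ with an explicit diagonal Yetter-Drinfeld structure. Since $Z \in T(V)$ is $\Z^N$-homogeneous (by the untwisted case of Lemma \ref{nl1}), it has a well-defined $G$-degree $g_Z := \prod_{i=1}^{N} g_i^{a_i}$ where $\deg_{\Z^N}(Z) = \sum_i a_i e_i,$ and the $G$-action on $kZ$ is determined by the braiding scalars of $V.$ Consequently $U$ is of diagonal type and rank $N + 1,$ whose braiding matrix is obtained from that of $V$ by adjoining one new row and column whose entries are explicit monomials in the original $q_{ij}.$ In particular, the braiding of $V$ is a sub-braiding of that of $U.$

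Next I would argue by contradiction. Assume $\mathscr{B}(U)$ is finite-dimensional. By Heckenberger's classification of finite arithmetic root systems, the generalized Dynkin diagram of $U$ would have to appear in the rank-$(N+1)$ list. The inclusion $V \hookrightarrow U$ realizes the root system of $V$ as a full sub-root system, so the new simple root $\alpha_Z$ corresponding to $Z$ would have to be admissible with respect to the Weyl groupoid of $U$ and the Cartan integers $-2(\alpha_Z, \alpha_i)/(\alpha_i, \alpha_i)$ would have to be non-positive integers within Heckenberger's allowed range.

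The main obstacle, which is the substance of Angiono's Section~4, is a case-by-case verification that for each type of defining polynomial $Z \in \mathbf{S}$ (quantum Serre-type relations, powers of root vectors, and the remaining Cartan-type relations), the resulting rank-$(N+1)$ braiding never fits Heckenberger's list. Typically one exhibits a rank-$2$ subbraiding spanned by $Z$ and some $X_i$ whose Nichols algebra is manifestly infinite-dimensional (for instance $q_{ZZ} = 1$ with $q_{Zi}q_{iZ} \neq 1,$ forcing $Z^n \neq 0$ for all $n,$ or a generalized Cartan integer falling outside the admissible range). This contradiction forces $\mathscr{B}(U)$ to be infinite-dimensional. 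The delicate point, and the reason the proof is not much shorter than a direct appeal to \cite{Ang}, is that the list $\mathbf{S}$ has many families and each requires a separate identification of a witnessing rank-$2$ subbraiding.
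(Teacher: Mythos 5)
Your proposal follows essentially the same route as the paper: both reduce the statement to Angiono's Section 4, where it is verified case by case that adjoining a defining relation $Z\in\mathbf{S}$ as a new diagonal generator either produces a braiding whose root system falls outside Heckenberger's classification or yields a $Z$ with trivial self-braiding generating an infinite-dimensional subalgebra. The paper's proof is just the corresponding citation to \cite[Proposition 4.1, Lemmas 4.2--4.12, Theorem 4.13]{Ang}, split into exactly the two cases you identify, so your sketch is correct and matches it.
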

\begin{proof}
If $Z$ is of the form (3.2) or (3.5-3.29) listed in \cite[Theorem 3.1]{Ang}, then from the proofs of \cite[Proposition 4.1]{Ang} and \cite[Lemmas 4.2-4.12]{Ang}, we know that the root systems of $\mathscr{B}(U)$ is not in the Heckenberger's list of classification of finite arithmetic root systems. Hence $\mathscr{B}(U)$ must be infinite-dimensional.

If $Z$ is of the form (3.1) or (3.3-3.4) listed in \cite[Theorem 3.1]{Ang}, then from the proof of \cite[Theorem 4.13]{Ang} we know that the subalgebra of $\mathscr{B}(U)$ generated by $Z$ is infinite-dimensional. Therefore $\mathscr{B}(U)$ is also infinite-dimensional.
\end{proof}

In the rest of this subsection, we return to the case of $\MM$, i.e., $\mathbbm{G}=\Z_{\m}\times \Z_{\n}$ and $\Phi=\Phi_{a,b,d},J=J_{a,b,d}$. The following result is a generalization of \cite[Theorem 4.13]{Ang} to pointed Majid algebras.
\begin{proposition}\label{np2}
Let $R=\oplus_{i\geq 0} R_i$ be a finite-dimensional graded (not necessarily coradically graded) Hopf algebra in ${_\mathbbm{G}^\mathbbm{G}\mathcal{YD}^\Phi}$ such that $R_0=k1$ and $\dim_{k}R_1=2$. If $R$ is generated by $R_1,$ then $R=\mathscr{B}(R_1).$
\end{proposition}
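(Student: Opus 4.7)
The strategy is to reduce to Angiono's analogous generation theorem \cite[Theorem 4.13]{Ang} for ordinary finite-dimensional pointed Hopf algebras of diagonal type, via the resolution $\pi\colon G\to \mathbbm{G}$ satisfying $\pi^{*}(\Phi)=\partial(J)$ (Proposition \ref{l3.8}) followed by the twist functor $\mathcal{F}^{J^{-1}}$ introduced in Subsection 2.5. The plan is to argue by contradiction: assuming $R\neq \mathscr{B}(R_1)$, I shall find a defining relation of $\mathscr{B}(R_1)$ that produces a new nonzero primitive element in $R$, hence a three-dimensional diagonal Yetter--Drinfeld submodule $U$ whose Nichols algebra is both finite-dimensional (as it is covered by a subalgebra of $R$) and, after transfer to the ordinary Yetter--Drinfeld category, infinite-dimensional by Lemma \ref{nl7}.

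Since $R$ is generated by $R_1$, the canonical map $T_{\Phi}(R_1)\twoheadrightarrow R$ has kernel $I_R$ concentrated in degrees $\geq 2$, and the universal property of $\mathscr{B}(R_1)$ yields a further surjection $R\twoheadrightarrow \mathscr{B}(R_1)$ with graded kernel $I_{\max}/I_R$. If this is not an isomorphism, let $n\geq 2$ be the least degree in which it fails. By Lemma \ref{nl1}, the ideal $I_{\max}$ is $\Z^2$-graded with respect to the decomposition $R_1=kX_1\oplus kX_2$, so I can pick a $\Z^2$-homogeneous $Z\in I_{\max,n}\setminus I_{R,n}$. The minimality of $n$ ensures $I_R=I_{\max}$ in all degrees $<n$, so the non-primitive part $\Delta_T(Z)-Z\otimes 1-1\otimes Z\in T\otimes I_{\max}+I_{\max}\otimes T$ maps to zero in $R\otimes R$; thus the image $\bar Z\in R$ is primitive and nonzero, and $U:=R_1\oplus k\bar Z$ is a three-dimensional diagonal Yetter--Drinfeld submodule of the primitives of $R$. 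The subalgebra of $R$ generated by $U$ surjects onto $\mathscr{B}(U)$, giving $\dim\mathscr{B}(U)\leq \dim R<\infty$.

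Next, I lift the $\mathbbm{G}$-action on $U$ to a $G$-action via $\iota$ as in Lemma \ref{nl2} to obtain $\widetilde U\in {_G^G\mathcal{YD}^{\pi^{*}(\Phi)}}$, and apply $\mathcal{F}^{J^{-1}}$ to land $\widetilde U^{J^{-1}}$ in the ordinary category ${_G^G\mathcal{YD}}$. By Lemma \ref{tn}, $\mathscr{B}(\widetilde U^{J^{-1}})\cong \mathscr{B}(\widetilde U)^{J^{-1}}\cong \mathscr{B}(U)^{J^{-1}}$, which remains finite-dimensional. On the other hand, using Lemmas \ref{nl5} and \ref{nl6} one can, after adjusting $Z$ by an element of the ideal generated by the lower-degree relations in $\Psi^{-1}(\mathbf{S})$ (all of which lie in $I_R$), arrange that $\bar Z$ corresponds under the lift-and-twist to one of Angiono's defining polynomials in $\mathbf{S}$ for $\mathscr{B}(\widetilde{R_1}^{J^{-1}})$. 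Lemma \ref{nl7} then forces $\mathscr{B}(\widetilde U^{J^{-1}})$ to be infinite-dimensional, contradiction.

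The main obstacle will be this last identification: one must ensure that the chosen $\Z^2$-homogeneous $Z$ can be adjusted so that it maps to a genuine member of Angiono's list $\mathbf{S}$ of defining relations, and not merely to some element of the ideal they generate. The minimality of $n$ provides exactly the right control, since the ideal of the lower-degree relations already lies in $I_R$ and therefore $I_{\max,n}/I_{R,n}$ is spanned by classes of true degree-$n$ generators of $\mathbf{S}$ pulled back through $\Psi^{-1}$; once this is secured, the infinite-dimensionality verdict of Lemma \ref{nl7} applies directly and closes the argument.
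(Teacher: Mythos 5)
Your proposal is correct and follows essentially the same route as the paper's proof: argue by contradiction, use the $\Z^2$-grading to extract a minimal-degree defining relation of $\mathscr{B}(R_1)$ that survives as a nonzero primitive in $R$, form $U=R_1\oplus k\bar Z$, transfer to the ordinary category ${_G^G\mathcal{YD}}$ via the resolution $\pi^*(\Phi)=\partial(J)$ and Lemmas \ref{tn}, \ref{nl5}, \ref{nl6}, and invoke Lemma \ref{nl7} to contradict finite-dimensionality. The only cosmetic difference is that the paper selects $Z$ directly from the minimal generating set $\Psi^{-1}(\mathbf{S})$ rather than adjusting an arbitrary homogeneous element of $I_{\max}\setminus I_R$ afterwards, which is the same bookkeeping you carry out in your final paragraph.
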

\begin{proof}
Let $I$ be an ideal of $T_{\Phi}(R_1)$ such that $R=T_{\Phi}(R_1)/I.$ Clearly, we have a surjective Hopf map $$\theta:\; R\twoheadrightarrow \mathscr{B}(R_1).$$  By Lemma \ref{nl2}, $\mathscr{B}(R_1)$ is also a Nichols algebra in ${^G_G\mathcal{YD}^{\pi^{*}(\Phi)}}$ for $G=\Z_m\times \Z_n$. By Proposition \ref{l3.8}, $\pi^{*}(\Phi)=\partial{J}$. Therefore, $\mathscr{B}{(R_1)}^{J^{-1}} \in {^G_G\mathcal{YD}}$ is a usual Nichols algebra. Now assume that $\theta$ is not an isomorphism, then there should be some polynomials in $\Psi^{-1}(\mathbf{S}),$ which are not contained in $I$ by Lemma \ref{nl5}. Suppose that $\Psi^{-1}(Z)$ is one of those with minimal length. From the proof of Lemma \ref{nl1}, we know that $\Psi^{-1}(Z)$ must be a primitive element in $R.$ Let $U=R_1\oplus k\Psi^{-1}(Z),$ then by the preceding assumption there is an embedding of linear spaces $\mathscr{B}(U)\to R.$

 We already know that $\mathscr{B}(R_1)^{J^{-1}}$ is a finite-dimensional Nichols algebra in $_G^G\mathcal{YD}$. By Lemma \ref{tn}, there exists $R_1'\in {_G^G\mathcal{YD}}$ such that $R_1=R_1'^J$.  By Lemma \ref{nl6}, we have $\mathscr{B}(R_1'\oplus Z)^J=\mathscr{B}(R_1\oplus k\Psi^{-1}(Z))=\mathscr{B}(U).$ Note that $\mathscr{B}(R_1'\oplus Z)^J$ is infinite-dimensional due to Lemma \ref{nl7}. Hence $\mathscr{B}(U)$ is infinite-dimensional, which contradicts to the assumption that $R$ is finite-dimensional. Thus $\theta$ is an isomorphism and $R$ is the Nichols algebra $\mathscr{B}(R_1).$
\end{proof}

\subsection{Dual Hopf algebras and the proof of Proposition \ref{np1}}

For any Hopf algebra $H$ in the category of (twisted) Yetter-Drinfeld modules, we use $P(H)$ to denote the set of primitive elements of $H$, that is, $P(H)=\{X\in H| \Delta(X)=1\otimes X+X\otimes 1\}.$  The following lemma is a generalization of \cite[Lemma 5.5]{as}.

\begin{lemma}\label{nl8}
Let $R=\oplus_{i\geq 0}R_i$ be a graded Hopf algebra in $_\mathbbm{G}^\mathbbm{G} \mathcal{YD}^\Phi$ with $R_0=k1$ and $P(R)=R_{1}.$ Then $R^*=\oplus_{i\geq 0}R_i^*$ $(resp. \ ^*R=\oplus_{i\geq 0}{^*R_i})$ is generated by $R_{1}^*$ $(resp. \ {^*R_{1}}).$
\end{lemma}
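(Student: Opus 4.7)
The plan is to argue by contradiction, following the standard duality strategy that originated in the Andruskiewitsch--Schneider approach (hence the reference to \cite{as}). I will write the proof for $R^{*}$; the argument for $^{*}R$ is identical with left duals in place of right duals.

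First I would let $T = \bigoplus_{i\geq 0} T_{i}$ be the graded subalgebra of $R^{*}$ generated by $R_{1}^{*}$, and verify that $T$ is actually a sub-Hopf-algebra of $R^{*}$ in $^{\mathbbm{G}}_{\mathbbm{G}}\mathcal{YD}^{\Phi}$. The key observation is that every $f \in R_{1}^{*}$ is primitive in $R^{*}$: because $R_{0}=k1$, evaluating $\Delta(f)$ on any $r\otimes s \in R_{i}\otimes R_{n-i}$ gives $f(rs)$, and this can be nonzero only when $\{i,n-i\}=\{0,1\}$. Hence the sub-bialgebra generated by $R_{1}^{*}$ coincides with $T$; in a graded connected setting the antipode recursion forces $T$ to be stable under $\S$, so $T$ is a sub-Hopf-algebra (and a Yetter--Drinfeld subobject, since $R_{1}^{*}$ is).

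Now suppose for contradiction that $T \neq R^{*}$, and let $n \geq 2$ be the smallest integer with $T_{n} \subsetneq R_{n}^{*}$. Put $J := T^{\perp}\subseteq R$. Since $T$ is a subalgebra, $J$ is a coideal of $R$; since $T$ is a subcoalgebra, $J$ is an ideal; and stability of $T$ under the antipode pulls back to give $\S(J)\subseteq J$. Thus $J$ is a graded Hopf ideal of $R$. By minimality of $n$ one has $J_{i}=0$ for $i<n$ and $J_{n}\neq 0$; pick a nonzero $x \in J_{n}$.

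The core step is to show $x \in P(R)$, which contradicts the hypothesis $P(R)=R_{1}$ because $n\geq 2$. Decompose the comultiplication by bidegree as
\[
\Delta(x) \;=\; 1\otimes x + x\otimes 1 + \sum_{i=1}^{n-1} y^{(i)}, \qquad y^{(i)} \in R_{i}\otimes R_{n-i},
\]
where the $1\otimes x$ and $x\otimes 1$ terms are forced by $R_{0}=k$ and the counit axiom. For any $f\in T_{i}$, $g\in T_{n-i}$ with $1\leq i\leq n-1$, the product $fg$ lies in $T_{n}$, so $(fg)(x)=0$, i.e.\ $(f\otimes g)\Delta(x)=0$; the primitive part of $\Delta(x)$ is killed automatically because $f,g$ annihilate $1\in R_{0}$. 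Hence $(f\otimes g)(y^{(i)})=0$ for all such $f,g$. By the minimality of $n$ we have $T_{i}=R_{i}^{*}$ and $T_{n-i}=R_{n-i}^{*}$ for $1\leq i\leq n-1$, so the pairing $R_{i}^{*}\otimes R_{n-i}^{*}\otimes (R_{i}\otimes R_{n-i})\to k$ is perfect in these degrees. Therefore every $y^{(i)}=0$, which shows $\Delta(x)=1\otimes x + x\otimes 1$, the desired contradiction.

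The principal technical nuisance I anticipate is keeping the braiding bookkeeping consistent in the dual $R^{*}\in {^{\mathbbm{G}}_{\mathbbm{G}}\mathcal{YD}^{\Phi}}$: multiplication in $R^{*}$ is defined via the braided coproduct of $R$, and the pairing between $R^{*}\otimes R^{*}$ and $R\otimes R$ picks up braiding factors. However, these factors are invertible scalars on the isotypic components used above, so they do not affect any of the vanishing conclusions; the argument is structural. The $^{*}R$ statement is then obtained by running the same argument using left duals, which exist in $^{\mathbbm{G}}_{\mathbbm{G}}\mathcal{YD}^{\Phi}$ as recalled in Subsection 2.4.
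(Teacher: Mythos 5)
Your proposal is correct, and it reaches the conclusion by a route that is organized differently from the paper's. The paper proves the statement by an explicit duality computation: it observes that the iterated multiplication $m_{R^*}^{\overrightarrow{s-1}}$ restricted to $(R_1^*)^{\otimes \overrightarrow{s}}$ is dual to the map $\Gamma_s=\pi^{\overrightarrow{s}}\circ \D^{\overrightarrow{s-1}}\colon R_s\to R_1^{\overrightarrow{s}}$, so generation of $R^*$ by $R_1^*$ is equivalent to injectivity of all $\Gamma_s$, and then it proves $P(R)=R_1\Leftrightarrow \Gamma_s$ injective for all $s\ge 2$ by induction on $s$, using the factorization $\Gamma_{s+1}=A_{l,s-1}(R_1)\circ(\Gamma_l\otimes\Gamma_{s-l})\circ\D$ for each $l$ to kill the mixed components of $\D(X)$. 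You instead argue by contradiction: you take the subalgebra $T$ generated by $R_1^*$, pass to its annihilator, pick a nonzero element $x$ of minimal degree $n\ge 2$ there, and show in one step that all mixed components of $\D(x)$ vanish because $T_i=R_i^*$ for $i<n$ and the degreewise evaluation pairing is nondegenerate; this produces a primitive in degree $n\ge 2$, contradicting $P(R)=R_1$. The two arguments rest on exactly the same duality between surjectivity of multiplication out of degree one and absence of higher primitives, but yours avoids the induction over $s$ (at the cost of only proving the implication actually needed, whereas the paper establishes the equivalence), and it requires each $R_i$ to be finite dimensional so that the graded dual and the perfect-pairing step behave as claimed --- which holds in the paper's setting. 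Your remark that the braiding and associativity constraints only contribute invertible scalars on isotypic components, and hence do not affect the vanishing conclusions, is the right way to dispose of the bookkeeping issue; the paper's proof absorbs the same issue into the isomorphism $A_{l,s-1}(R_1)$.
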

\begin{proof}
Let $$m_{R^*}^{\overrightarrow{s}}=m_{R^*}\circ(m_{R^*}\otimes \id_{R^*})\circ\cdots \circ(m_{R^*}\otimes \underbrace{\id_{R^*}\cdots \otimes \id_{R^*}}_{s-1})$$ and $$\D^{\overrightarrow{s}}=(\cdots(\D\otimes \underbrace{\id_{R})\cdots \id_{R}}_{s-1})\circ \cdots \circ (\D_{R}\otimes \id_{R})\circ \D_{R}.$$
For $f^1,f^2,\cdots, f^s\in R_{1}^*$ and $X\in R_s,$ we have

\begin{eqnarray*}
m_{R^*}^{\overrightarrow{s-1}}(\cdots(f^1\otimes f^2)\cdots f^s)(X)&=&(\cdots(f^1\otimes f^2)\cdots f^s)(\D_{R}^{\overrightarrow{s-1}}(X))\\
&=&(\cdots(f^1\otimes f^2)\cdots f^s)(\pi^{\overrightarrow{s}}\circ \D_{R}^{\overrightarrow{s-1}}(X))
\end{eqnarray*}
where $\pi^{\overrightarrow{s}}=(\cdots(\underbrace{\pi\otimes \pi)\cdots \pi}_{s})$ and $\pi:R\to R_{1}$ is the canonical projection. Hence as linear maps, $m_{R^*}^{\overrightarrow{s-1}}$ are dual to $\pi^{\overrightarrow{s}}\circ\D^{\overrightarrow{s-1}}.$ So for all $s\geq 2,$ $\Gamma_s=\pi^{\overrightarrow{s}}\circ \D^{\overrightarrow{s-1}}:R_s\to R_{1}^{\overrightarrow{s}}$ is injective if and only if $m_{R^*}^{\overrightarrow{s-1}}:R^*(1)^{\overrightarrow{s}}\to R^*_s$ is surjective.

Therefore, to prove the claim it is enough to show that  $P(R)=R_{1}$ if and only if $\Gamma_s$ is injective for all $s\geq 2.$ On the one hand, assume that the $\Gamma_s$ are injective. If $X\in R_s$ is a primitive element for some $s \geq 2,$ then $\Gamma_s(X)=0$ by definition, and hence $X=0.$ So it follows that $P(R)=R_{1}.$

On the other hand, assume that $P(R)=R_{1}.$ We will use induction to prove that each $\Gamma_s$ is injective for all $s\geq 2.$ If $s=2,$ for an $X\in R_2$ we have $\D(X)=X\otimes 1+1\otimes X+ X_1\otimes X_2$ where $X_1, X_2\in R_{1}$ since $\D(R_N)\subset \bigoplus_{i=0}^NR_{N-i}\otimes R_i.$ So  $\Gamma_2(X)=0$ implies $X_1\otimes X_2=0,$ hence $\D(X)=1\otimes X+X\otimes 1$ is a primitive element. But $P(R)=R_{1},$ so $X=0.$ Denote $$A_{s,t}(V)=(\cdots(a^{-1}_{V^{\overrightarrow{s}},V,V}\otimes \underbrace{\id_V)\cdots \id_V}_{t-2})\circ (\cdots(a^{-1}_{V^{\overrightarrow{s}},V^{\overrightarrow{2}},V}\otimes \underbrace{\id_V)\cdots \id_V}_{t-3})\cdots \circ a^{-1}_{V^{\overrightarrow{s}},V^{\overrightarrow{t-1}},V}.$$ Thus we have
$$\Gamma_{s+1}=A_{l,s-1}(R_{1})\circ(\Gamma_l\otimes \Gamma_{s-l})\circ \D$$ for $1\leq l\leq s.$
If $\Gamma_{s+1}(X)=0$ for $X\in R_{s+1},$ then we have $(\Gamma_l\otimes \Gamma_{s-l})\circ \D(X)=0$ since $A_{l,s-1}(R_{1})$ is an isomorphism. Write $\D(X)=X\otimes 1+1\otimes X+X_1\otimes X_2+Y_1\otimes Y_2,$ here $X_1\in R_l,X_2\in R_{s-l},$ $Y_1\otimes Y_2\notin R_l\otimes R_{s-l}.$ Note that $(\Gamma_l\otimes \Gamma_{s-l})(Y_1\otimes Y_2)=0$ by definition. So we have $(\Gamma_l\otimes \Gamma_{s-l})(X_1\otimes X_2)=0$ since $\Gamma_{s+1}(X)=0,$ which implies $X_1\otimes X_2=0$ since $\Gamma_l$ are injective for $l<s+1$ by induction. Since $l$ is arbitrary, we have  $\D(X)\in R_{s+1}\otimes R_{0}\oplus R_{0}\otimes R_{s+1}.$ Hence $X$ must be primitive, which implies $X=0.$
\end{proof}

\noindent \textbf{Proof of Proposition \ref{np1}.} By assumption, $\R_0=k1$ and $P(\R)=\R_1$ has dimension $2.$ According to Lemma \ref{nl8}, $\R^*=\oplus_{i\geq 0}\R_i^*$ is generated by $\R_{1}^*.$ By Proposition \ref{np2}, $\R^*=\mathscr{B}(\R_{1}^*).$ So we have $P(\R^*)=\R_{1}^*,$ and  $^*(\R^*)=\R$ is generated by $\R_{1}$ according to Lemma \ref{nl8} again. Hence $\R$ is also a Nichols algebra again by Proposition \ref{np2}. Thus, $\R=\mathscr{B}(\R_{1})$.

\begin{corollary}\label{nc1}
Let $H$ be a finite-dimensional pointed Majid algebras of rank $2$, then $H$ is generated by group-like and skew-primitive elements.
\end{corollary}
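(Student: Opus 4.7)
The plan is to pass to the associated graded Majid algebra $\gr H$ and invoke Proposition \ref{np1}. First I would observe that, for any pointed coalgebra, the first piece $H_1$ of the coradical filtration equals $kG(H)$ together with the space of skew-primitive elements. Consequently, $H$ is generated as an algebra by its group-likes and skew-primitives if and only if $H_1$ generates $H$, and by a standard filtered-to-graded comparison this is in turn equivalent to $\gr H$ being generated by its degree-$0$ and degree-$1$ components. Since $\gr H$ is coradically graded and has the same Gabriel quiver, hence the same rank, as $H$, the problem reduces to proving degree-one generation for a coradically graded pointed Majid algebra of rank $2$.

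Next I would use bosonization (Proposition \ref{p4.5}) to factor $\gr H \cong \R \# kG(\gr H)$, where $\R$ is the coinvariant subalgebra, a coradically graded Hopf algebra in the twisted Yetter-Drinfeld category ${^{G(\gr H)}_{G(\gr H)}\mathcal{YD}^{\Phi}}$ satisfying $\dim \R_1 = 2$. In the connected case, Lemma \ref{l2.8} forces $G(\gr H) \cong \mathbb{Z}_{\mathbbm{m}} \times \mathbb{Z}_{\mathbbm{n}}$, so Proposition \ref{np1} applies directly and yields $\R \cong \mathscr{B}(\R_1)$. Thus $\R$ is generated by $\R_1$, and $\gr H$ is generated by $\R_1$ together with the group-likes. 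The non-connected case can be reduced to the connected one by restricting to the sub-Majid algebra supported on the connected component of the identity in the Gabriel quiver and noting that this component already accounts for all rank.

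The main obstacle I foresee is the filtered-to-graded lifting at the end. Concretely, if $A \subseteq H$ denotes the subalgebra generated by $H_1$, one must show $A = H$. The standard approach is to equip $A$ with the filtration induced from the coradical filtration of $H$ and observe that $\gr A$ embeds into $\gr H$ as a graded subalgebra containing everything in degrees $0$ and $1$; the generation result established for $\gr H$ then forces $\gr A = \gr H$, and hence $A = H$ by an induction on the filtration degree. Some care is required because the multiplication of a Majid algebra is only quasi-associative, but the filtration is compatible with the associator since $\gr \Phi$ is supported on $H_0$ (as recalled in Section 2), so the usual filtered-graded machinery goes through without serious change.
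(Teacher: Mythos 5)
Your proposal follows the paper's own argument: reduce to $\gr H$, bosonize as $\R\# k\mathbb{G}$, and apply Proposition \ref{np1} to conclude $\R=\mathscr{B}(\R_1)$ is generated in degree one. You merely spell out the filtered-to-graded comparison and the (implicitly assumed) connectedness reduction in more detail than the paper does, so the approach is essentially identical and correct.
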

\begin{proof}
It is clear that $H$ is generated by group-like and skew-primitive elements if and only if $\gr(H)$ is.  So we may assume that $H$ is coradically graded. Let $H_0=k \mathbbm{G},$ then the coinvariant subalgebra $\R$ is a graded Hopf algebra in $_\mathbbm{G}^\mathbbm{G} \mathcal{YD}^\Phi$. By Proposition \ref{np1}, $\R$ is generated by primitives. This clearly leads to the claim.
\end{proof}

\section{Classification procedure and the main result}
In this section, a theoretical procedure to classify graded connected pointed Majid algebras of rank $2$ is provided. The procedure is applied to get our main classification result, that is, Theorem \ref{t5.17}.

\subsection{General setup.}
In this section, we always assume that $\MM$ is a finite-dimensional connected coradically graded pointed Majid algebra of rank $2$. Keep the notations of Section 4. Recall that, $\MM_{0}=(k\mathbb{G},\Phi)$ where
$\mathbb{G}=\mathbb{Z}_{\mathbbm{m}}\times \mathbb{Z}_{\mathbbm{n}} = \langle \mathbbm{g}_1 \rangle \times  \langle \mathbbm{g}_2 \rangle $
with $\mathbbm{m}|\mathbbm{n}$ and $\Phi=\Phi_{a,b,d}$ for some $0\leq a,b\leq \mathbbm{m}-1,\;0\leq c\leq \mathbbm{n}-1$. By the bosonization procedure, we have
$\MM=\R\# k\mathbb{G}$ and $\dim\R_1=2$ since $\MM$ is assumed of rank $2.$
By Corollary \ref{c3.7}, $\R_1$ is a diagonal Yetter-Drinfeld module over $(k\mathbb{G},\Phi)$ .
Therefore, we may write \[ \R_1=V_1 \oplus V_2 = k X_1 \oplus k X_2 \] as a direct sum of two $1$-dimensional Yetter-Drinfeld modules. As in Subsection 3.3, we consider the bigger abelian group $G=\mathbb{Z}_{m}\times \mathbb{Z}_n = \langle g_1 \rangle \times  \langle g_2 \rangle$
with $m=\mathbbm{m}^2, n=\mathbbm{n}^2$ and the canonical epimorphism
$$\pi:\;kG\to k\mathbbm{G},\;\;\;\;g_{1}\mapsto \mathbbm{g}_{1},\;g_{2}\mapsto \mathbbm{g}_{2}$$
with a typical section
$$\iota:\;k\mathbbm{G}\to kG,\;\;\;\;\mathbbm{g}^{i}_{1}\mathbbm{g}^{j}_{2} \mapsto {g}^{i}_{1}g_{2}^j.$$

\subsection{From Majid algebras to Hopf algebras.}
It was shown in Subsection 2.6 that the coinvariant subalgebra $\R$ of $\MM$ is a Hopf algebra in $^{\mathbbm{G}}_{\mathbbm{G}}\mathcal{YD}^{\Phi}$. Moreover, $\R=\mathscr{B}(\R_{1})\in {^{\mathbbm{G}}_{\mathbbm{G}}\mathcal{YD}^{\Phi}}$ by Proposition \ref{np1}. Thanks to Lemma \ref{nl2}, $\R$ is also a Hopf algebra in $^{G}_{G}\mathcal{YD}^{\pi^{*}(\Phi)}$ by extending the following Yetter-Drinfeld module structure on $\R_1$ to $\R$:
\begin{eqnarray}
&&\rho_{L}:\;\R_1 \to kG\otimes \R_1,\;\;\;\;\rho_{L}=(\iota\otimes \id)\delta_{L}\\
\label{eq5.2} &&\blacktriangleright:\;kG\otimes \R_1\to \R_1,\;\;\;\;g\blacktriangleright X=\pi(g)\triangleright X
\end{eqnarray}
for all $g\in G$ and $X\in \R_1$.

The following observation is contained implicitly in the proof of Proposition \ref{np1}. As it is the crux of our classification procedure, we include an explicit proof here.

\begin{proposition}\label{p5.6} The Majid bosonization
$$\emph{\textbf{M}}:=\R\# kG$$
as defined in Subsection 2.6 is twist equivalent to an ordinary connected pointed Hopf algebra. \end{proposition}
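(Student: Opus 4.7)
The strategy rests on Proposition 3.8, which identifies $\pi^{*}(\Phi) = \partial(J_{a,b,d})$ as a coboundary on the larger group $G$. The idea is to lift the $2$-cochain $J_{a,b,d}$ to a gauge transformation on $\textbf{M}$ and then twist by its inverse to trivialize the associator.

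Concretely, I would define a twisting $\tilde{J}$ on $\textbf{M} = \R \# kG$ by
\[ \tilde{J}(X \otimes g,\ Y \otimes h) := \varepsilon_{\R}(X)\, \varepsilon_{\R}(Y)\, J_{a,b,d}(g, h) \]
for $X, Y \in \R$ and $g, h \in G$. The normalization and convolution-invertibility of $\tilde{J}$ are inherited directly from $J_{a,b,d}$, so $\tilde{J}$ is a legitimate twisting. Next I would verify that the associator of $\textbf{M}$ is ``concentrated on the coradical'': since $\R = \bigoplus_{i \geq 0} \R_{i}$ is $\mathbb{N}$-graded with $\R_{0} = k$, the bosonization $\textbf{M}$ is coradically graded with $\textbf{M}_{0} = kG$, and combining this with the fact recalled in Subsection 2.1 that for a coradically graded Majid algebra $\gr\Phi(\bar a, \bar b, \bar c)$ vanishes unless all three arguments lie in the coradical, one gets
\[ \Phi_{\textbf{M}}(a, b, c) = \pi^{*}(\Phi)\bigl(p_{0}(a),\ p_{0}(b),\ p_{0}(c)\bigr), \]
where $p_{0} \colon \textbf{M} \twoheadrightarrow \textbf{M}_{0} = kG$ is the coradical projection.

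Applying the general twisting formula for the associator and evaluating on group-like arguments $a, b, c \in G$ yields
\[ \Phi_{\textbf{M}}^{\tilde{J}^{-1}}(a, b, c) = \partial(J_{a,b,d})^{-1}(a,b,c) \cdot \pi^{*}(\Phi)(a,b,c) = 1 \]
by Proposition 3.8, while on arguments of positive $\mathbb{N}$-degree the expression collapses trivially because $\Phi_{\textbf{M}}$ already vanishes there. Hence $\Phi_{\textbf{M}}^{\tilde{J}^{-1}} \equiv 1$, so $\textbf{M}^{\tilde{J}^{-1}}$ is an ordinary pointed Hopf algebra; pointedness is preserved because twisting leaves the coalgebra structure intact, and connectedness of the Gabriel quiver follows from the fact, established in Proposition 4.1, that $\R$ is generated by $\R_{1}$ (so that the arrows of the Gabriel quiver of $\textbf{M}$ together with the group $G$ produce a connected graph).

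The main technical subtlety I anticipate is checking that $\tilde{J}$ really functions as a twisting on the full Majid algebra $\textbf{M}$, and not merely on the coradical $kG$, so that the twist formula for $\Phi^{\tilde{J}^{-1}}$ collapses as claimed even when the arguments have positive degree in $\R$. This reduces to the normalization relations $J_{a,b,d}(g, 1) = 1 = J_{a,b,d}(1, g)$ built into the definition of $J_{a,b,d}$ in Subsection 3.3, together with careful bookkeeping using the bigraded structure of $\textbf{M}$.
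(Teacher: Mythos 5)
Your proposal is correct and follows essentially the same route as the paper's proof: identify the associator of $\textbf{M}$ with $\pi^{*}(\Phi_{a,b,d})$ supported on the coradical, invoke Proposition \ref{l3.8} to write it as $\partial(J_{a,b,d})$, and twist by the inverse cochain to trivialize it, with pointedness and connectedness preserved under twisting. The paper states this more tersely (citing Subsection 2.1 for the twisting mechanics rather than writing out $\tilde{J}$ and the degree bookkeeping explicitly), but the substance is identical.
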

\begin{proof}
Note that the associator of $\mathbf M$ is provided by $\pi^{\ast}(\Phi_{a,b,d})$ for some $0\leq a,b<\mathbbm{m},\ 0\leq d<\mathbbm{n}$. Then by Proposition \ref{l3.8}, there exists a 2-cochain $J$ on $G$ such that $\pi^{\ast}(\Phi_{a,b,d})=\partial J$. Clearly, ${\bf{M}}^{J^{-1}}$ is an ordinary Hopf algebra, see Subsection 2.1. Finally, the connectedness of $\mathbf M$ implies that of ${\bf{M}}^{J^{-1}}.$
\end{proof}

The following example, though of rank 1, provides an explanation of the previous proposition.
\begin{example}\label{ex5.4} \emph{ Take $M(\mathbbm{n},1,q)$ as given in Example \ref{e2.7}. Recall that \[
\Phi (\mathbbm{g}^{i},\mathbbm{g}^{j},\mathbbm{g}^{k})=\mathbbm{q}^{i[\frac{j+k}{\mathbbm{n}}]}.\]
In this case, the coinvariant subalgebra $\R=k[X]/(X^{n})$ with $n=\mathbbm{n}^2,$ and $\R_1=kX.$ Then by equations \eqref{eq2.10} and \eqref{eq4.1}, we have
\begin{equation}\mathbbm{g}^{i}\triangleright X=(\mathbbm{q}q)^{i}X.
\end{equation}
Let $\Z_{n}=\langle g \rangle$ and $\pi\colon \Z_n\to \Z_{\mathbbm{n}},\; g\mapsto \mathbbm{g}$. This $\Z_{\mathbbm{n}}$-action extends to a $\mathbb{Z}_{n}$-action $\blacktriangleright$ according to \eqref{eq5.2}. More precisely,
$$g^{i}\blacktriangleright X=\left \{
\begin{array}{ll} \mathbbm{q}(\mathbbm{q}q)^{i'}X & \;\;\;\;\mathbbm{n}\nmid i,\\
 X& \;\;\;\;\mathbbm{n}\mid i.
\end{array}\right. $$
Now $\R$ becomes a Hopf algebra in $^{\Z_n}_{\Z_n}\mathcal{YD}^{\pi^{\ast}(\Phi)}$ and we get the Majid algebra $\R\# k\Z_{n}$ by the bosonization. Note that the associator of $\R\# k\Z_{n}$ is $\pi^{\ast}(\Phi)$, which is exactly the differential of the following $2$-cochain
$$J:\; k\mathbb{Z}_n\otimes k\mathbb{Z}_{n}\to k,\;\;\;\;(g^i,g^j)\mapsto q^{i(j-j')}.$$
Therefore, the associator of $(\R\# k\mathbb{Z}_{n})^{J^{-1}}$ is trivial by Subsection 2.1, and thus  $(\R\# k\mathbb{Z}_{n})^{J^{-1}}$ is an ordinary Hopf algebra. }\end{example}

\begin{remark}
Let $\R$ be a Hopf algebra in $^{G}_{G}\mathcal{YD}^{\Phi}$. Note that the bosonization $\R\# kG$ is an ordinary Hopf algebra if and only if each ${^{g}\R}$ is a linear representation of $G$.   As a matter of fact,  in the above example
the $\Z_{n}$-action on $\R$ in $(\R\# k\mathbb{Z}_{n})^{J^{-1}}$  is given by$$g^{i}\blacktriangleright^J X=\left \{
\begin{array}{ll} \mathbbm{q}^{-1}q^{i+(-i)'}(\mathbbm{q}q)^{i'}X & \;\;\;\;\mathbbm{n}\nmid i\\
 q^{i}X& \;\;\;\;\mathbbm{n}\mid i.
\end{array}\right. $$
From this, we always have
$$g^{i}\blacktriangleright^J (g^j\blacktriangleright^J X)=g^{i+j}\blacktriangleright^{J}X.$$
That is, we turn the projective representation into the usual linear representation. That is, $(\R\# k\mathbb{Z}_{n})^{J^{-1}}$ is a usual Hopf algebra, which in fact is the familiar Taft algebra $T_n$.
\end{remark}

So far, we have achieved the following one-way road map:
\begin{figure}[hbt]
\begin{picture}(100,100)(0,0)
\put(0,90){\makebox(0,0){$ \MM=\R\#k\mathbbm{G}\;\;\;$ \textsf{Original Majid algebra}}}
\put(-50,85){\vector(0,-1){30}}
\put(0,50){\makebox(0,0){$ {\textbf{M}}=\R\#kG\;\;\;$ \textsf{Bigger Majid algebra}}}
\put(-50,45){\vector(0,-1){30}}\put(-40,30){\makebox(0,0){$\exists \, J$}}
\put(0,10){\makebox(0,0){$ \textbf{H}=(\R\#kG)^{J}\;\;\;$ \textsf{Ordinary Hopf algebra}}}
\put(20,-5){\makebox(0,0){\textrm{Figure I}}}
\end{picture}
\end{figure}

This offers us a possible chance to take advantage of the successful theory of finite-dimensional pointed Hopf algebras to Majid algebras. Of course, our next task is to get a ``return ticket" from ordinary pointed Hopf algebras to genuine pointed Majid algebras.

\subsection{From Hopf algebras to Majid algebras.}
Although $\R$ is a ``twisted" version of a Nichols algebra, it is completely not clear yet when a ``twisted" version of a Nichols algebra is indeed the coinvariant subalgebra of a genuine Majid algebra. The aim of this subsection is to find an answer to this question.

According to Figure I, take $\mathscr{B}=\mathscr{B}(V)$ a finite-dimensional diagonal Nichols algebra of rank $2$ as classified in \cite{h}. Fix a decomposition $$V=kX_1\oplus kX_2$$
as a direct sum of $1$-dimensional Yetter-Drinfeld modules. Find two square integers $m=\mathbbm{m}^2,\;n=\mathbbm{n}^2$ with
$m\mid n$ and the abelian group $G=\mathbb{Z}_{m}\times \mathbb{Z}_{n}=\langle g_1\rangle \times \langle g_2\rangle$ such that $\mathscr{B}$ is a Nichols algebra in $^{G}_{G}\mathcal{YD}$.
Take $J=J_{a,b,d}$ as given in \eqref{eq3.2} and consider
$$(\mathscr{B}\# kG)^{J}=\mathscr{B}^{J}\# kG$$
where $(\mathscr{B}^{J}, \rho_{L}, \blacktriangleright)$ is regarded as a Nichols algebra in $^{G}_{G}\mathcal{YD}^{\partial(J)}$.
As before, let $\mathbbm{G}=\mathbb{Z}_{\mathbbm{m}}\times \mathbb{Z}_{\mathbbm{n}}=\langle \mathbbm{g}_1\rangle \times \langle \mathbbm{g}_2\rangle$.
Let $\Phi=\Phi_{a,b,d}$ as given in \eqref{eq3.1} which is a $3$-cocycle on $\mathbbm{G}$.
Keep the notations of Subsection 5.2. What we need to do is to reverse the first step of the procedure in Subsection 5.2, that is, find \begin{equation}
\delta_{L}:\;\mathscr{B}^{J}\to k\mathbbm{G}\otimes \mathscr{B}^{J},
\quad \triangleright:\;k\mathbbm{G}\otimes \mathscr{B}^J\to \mathscr{B}^J,
\end{equation}
such that $(\mathscr{B}^{J}, \delta_{L}, \triangleright)$ is a Nichols algebra in $^{\mathbbm{G}}_{\mathbbm{G}}\mathcal{YD}^{\Phi}$ and the restrictions of $\rho_{L},\ \blacktriangleright$ to $V$ are just $(\iota\otimes \id)\delta_{L},\ \pi(-)\triangleright$ respectively, i.e., $\rho_{L}|_{V}=(\iota\otimes \id)\delta_{L}|_{V},\ \blacktriangleright|_{V}=\pi(-)\triangleright|_{V}$. If this is the case, we call $(\mathscr{B}^{J}, \rho_{L}, \blacktriangleright)$  the {\it induced Nichols algebra} of $(\mathscr{B}^{J}, \delta_{L}, \triangleright).$

\begin{lemma} The aforementioned
$\delta_{L}$ and $\triangleright$ exist if and only if both $g_1^{\mathbbm{m}}$
and $g_2^{\mathbbm{n}}$ lie in the center of  $\mathscr{B}^{J}\# kG$.
\end{lemma}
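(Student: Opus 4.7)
The plan is to reduce both implications to the single statement that $g_1^{\mathbbm{m}}$ and $g_2^{\mathbbm{n}}$ act trivially on $\mathscr{B}^J$, by comparing the bosonization multiplication formula in $\mathscr{B}^J\#kG$ (whose associator is $\pi^{\ast}(\Phi)=\partial(J)$) with the condition for the $G$-action on $\mathscr{B}^J$ to descend through the quotient $\pi\colon G\to\mathbbm{G}$.

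For the forward direction, suppose $(\mathscr{B}^J,\delta_L,\triangleright)\in{^{\mathbbm{G}}_{\mathbbm{G}}\mathcal{YD}^{\Phi}}$ extends the data on $V$ as stipulated. The compatibility $\blacktriangleright|_V=\pi(-)\triangleright|_V$ forces $\mathbbm{g}\triangleright X_i=g\blacktriangleright X_i$ for any lift $g\in G$ of $\mathbbm{g}\in\mathbbm{G}$. Taking $\mathbbm{g}=1$ with the nontrivial lifts $g_1^{\mathbbm{m}}$ and $g_2^{\mathbbm{n}}$ yields $g_1^{\mathbbm{m}}\blacktriangleright X_i=X_i=g_2^{\mathbbm{n}}\blacktriangleright X_i$. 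Since $\mathscr{B}^J$ is generated by $V$ and the category action is multiplicative up to the cocycle $\widetilde{\pi^{\ast}(\Phi)}_h=\widetilde{\Phi}_{\pi(h)}$, which is trivial whenever $\pi(h)=1$, this triviality propagates to all of $\mathscr{B}^J$. Translating into centrality, I would apply the bosonization formula
\[ (X\otimes g)(Y\otimes h)=\frac{\pi^{\ast}(\Phi)(xg,y,h)\,\pi^{\ast}(\Phi)(x,y,g)}{\pi^{\ast}(\Phi)(x,g,y)\,\pi^{\ast}(\Phi)(xy,g,h)}\,X(g\blacktriangleright Y)\otimes gh \]
to the pair $(X\otimes g)(1\otimes g_1^{\mathbbm{m}})$ versus $(1\otimes g_1^{\mathbbm{m}})(X\otimes g)$. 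Every one of the six $\pi^{\ast}(\Phi)$-factors either vanishes by normalisation (when $g_1^{\mathbbm{m}}$ shares a slot with $1$) or pulls back via $\pi(g_1^{\mathbbm{m}})=1$ to a $\Phi$-evaluation containing the identity of $\mathbbm{G}$, again equal to $1$. The residual scalar $\widetilde{\pi^{\ast}(\Phi)}_{g_1^{\mathbbm{m}}}(x,g)=\widetilde{\Phi}_1(\pi(x),\pi(g))$ is also $1$, so the commutator identity collapses to $g_1^{\mathbbm{m}}\blacktriangleright X=X$; the same computation handles $g_2^{\mathbbm{n}}$. This establishes the equivalence between centrality and triviality of action.

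For the converse, assume centrality. Define $\delta_L:=(\pi\otimes\id)\rho_L$, which is automatic, and $\mathbbm{g}\triangleright X:=g\blacktriangleright X$ for any chosen lift $g\in G$ of $\mathbbm{g}$. Independence of the lift is secured by the triviality of the action of $\ker\pi=\langle g_1^{\mathbbm{m}},g_2^{\mathbbm{n}}\rangle$ together with the identity $\widetilde{\pi^{\ast}(\Phi)}_x(g,h)=\widetilde{\Phi}_{\pi(x)}(\pi(g),\pi(h))=1$ for $h\in\ker\pi$. The projective action axiom defining a Yetter--Drinfeld module over $(k\mathbbm{G},\Phi)$ (Definition 2.9) then transfers directly from $\blacktriangleright$ to $\triangleright$ via the same identity, showing $(\mathscr{B}^J,\delta_L,\triangleright)\in{^{\mathbbm{G}}_{\mathbbm{G}}\mathcal{YD}^{\Phi}}$. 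The main obstacle is the bookkeeping of the six cocycle factors in the commutator calculation; the clean simplification comes from the observation that every $\pi^{\ast}(\Phi)$-evaluation with $g_1^{\mathbbm{m}}$ or $g_2^{\mathbbm{n}}$ in a slot pulls back to a $\Phi$-evaluation at an argument in $\ker\pi$, and so collapses to $1$ by normalisation.
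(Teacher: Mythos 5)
Your proof is correct, but it follows a genuinely different route from the paper's. The paper introduces the right ideal $I=(g_1^{\m}-1)\textbf{H}^{J}+(g_2^{\n}-1)\textbf{H}^{J}$, shows by a dimension count that $I$ coincides with the kernel of the canonical surjection $\textbf{H}^{J}\to\mathscr{B}^{J}\#k\mathbbm{G}$, reduces the lemma to the statement that $I$ is a two-sided (hence Majid) ideal, and finally identifies that with centrality via the scalar relation $X_1\circ g_1^{\m}=\alpha\, g_1^{\m}\circ X_1$. You instead cut out the ideal entirely: you show that the four $\pi^{\ast}(\Phi)$-factors in the bosonization product are trivial whenever an argument lies in $\Ker\pi$, so that centrality of $g_1^{\m},g_2^{\n}$ is literally equivalent to the triviality of the $\blacktriangleright$-action of $\Ker\pi=\langle g_1^{\m},g_2^{\n}\rangle$, and then observe that this triviality is exactly the obstruction to descending $(\rho_L,\blacktriangleright)$ along $\pi$ to a $(k\mathbbm{G},\Phi)$-structure (well-definedness of $\mathbbm{g}\triangleright X:=g\blacktriangleright X$ plus the identity $\widetilde{\pi^{\ast}(\Phi)}_x=\widetilde{\Phi}_{\pi(x)}\circ(\pi\times\pi)$). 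Your argument is more transparent about the mechanism; the paper's ideal-theoretic version has the advantage of simultaneously constructing the quotient $\textbf{H}^{J}/I\cong\mathscr{B}^{J}\#k\mathbbm{G}$ of the correct dimension, which is the object used downstream in the classification. Two cosmetic points: the bosonization product carries four associator factors, not six, and they equal $1$ rather than ``vanish''; and in the converse you should also record (as the paper tacitly does) that the multiplication, comultiplication and antipode of $\mathscr{B}^{J}$ remain morphisms in ${^{\mathbbm{G}}_{\mathbbm{G}}\mathcal{YD}^{\Phi}}$ and that the Nichols property is untouched --- all immediate because every structure constant over $G$ factors through $\pi$.
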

\begin{proof} For brevity and for the consistence of the notations, denote the Majid algebra $(\mathscr{B}\# kG)^{J}$ by $\textbf{H}^{J}.$ Let $I$ be the right ideal of $\textbf{H}^{J}$ generated by $g_1^{\mathbbm{m}}-1$ and $g_2^{\mathbbm{n}}-1.$ That is,
$$I=(g_1^{\mathbbm{m}}-1)\textbf{H}^{J}+(g_2^{\mathbbm{n}}-1)\textbf{H}^{J}.$$
It is not hard to find that
\begin{equation} \label{eq5.6} \dim \textbf{H}^{J}-\dim I=\dim (\mathscr{B}^{J}\otimes k\mathbbm{G}).\end{equation}

\noindent Claim: \emph{$\mathscr{B}^{J}$ is a Nichols algebra in $^{\mathbbm{G}}_{\mathbbm{G}}\mathcal{YD}^{\Phi}$ such that its
induced Nichols algebra is $\mathscr{B}^{J}$ in $^{G}_{G}\mathcal{YD}^{\partial(J)}$ if and only if $I$ is a Majid ideal.}

\emph{Proof of the claim: }  ``$\Leftarrow$" Assume that $I$ is a Majid ideal, so $\textbf{H}^{J}/I=\mathscr{B}^{J}\otimes k\mathbbm{G}$ is a Majid algebra. This implies $\mathscr{B}^{J}$ is a Nichols algebra in $^{\mathbbm{G}}_{\mathbbm{G}}\mathcal{YD}^{\Phi}$. Simple computation shows that the induced Nichols algebra of this $\mathscr{B}^{J}$ is exactly $(\mathscr{B}^{J}, \rho_{L}, \blacktriangleright)$.

``$\Rightarrow$" By assumption, $\mathscr{B}^{J}\# k{G}$ is a Majid algebra and we have a canonical epimorphism  $$F:\ \textbf{H}^{J}=\mathscr{B}^{J}\otimes k{G}\to \mathscr{B}^{J}\# k\mathbbm{G},\quad x\otimes g\mapsto x\otimes \pi(g).$$
It is easy to see that $I\subseteq \Ker(F)$. By \eqref{eq5.6}, $I=\Ker(F)$, hence a Majid ideal.

Now get back to the proof of the lemma. Obviously $I$ is a coideal. Therefore, $I$ is a Majid ideal if and only if it is an ideal. Hence to prove the lemma, it amounts to show that $I$ is an ideal if and only if both $g_1^{\mathbbm{m}}$ and $g_2^{\mathbbm{n}}$ lie in the center of $\mathscr{B}^{J}\# kG$.

Clearly, if both $g_1^{\mathbbm{m}}$
and $g_2^{\mathbbm{n}}$ lie in the center then $I$ is an ideal. Conversely, assume that $I$ is an ideal.
Then $X_1\circ (g_1^{\mathbbm{m}}-1)\in I$, where $\circ$ denotes the multiplication
of $\textbf{H}^{J}$.  Since we always have
$$X_1\circ g_1^{\mathbbm{m}}=\alpha g_1^{\mathbbm{m}}\circ X_1$$
for some $0\neq \alpha\in k$. Therefore,
$$X_1\circ (g_1^{\mathbbm{m}}-1)=\alpha g_1^{\mathbbm{m}}\circ X_1 -X_1=\alpha (g_1^{\mathbbm{m}}-1)X_1+(\alpha -1)X_1.$$
So  $(\alpha -1)X_1\in I$, this forces that $\alpha=1$, that is, $g_1^{\mathbbm{m}}$
commutes with $X_1$. Similarly,  $g_1^{\mathbbm{m}}$ commutes with $X_2$. Thus
$g_1^{\mathbbm{m}}$ lies in the center. By the same method one can show that $g_2^{\mathbbm{n}}$
also belongs to the center.
\end{proof}

\begin{example}
\emph{ Let $T_{n}=T_{\mathbbm{n}^2}$ be the Taft algebra of dimension $\mathbbm{n}^4$. By definition,
$T_{n}$ is generated by two elements $x,g$ subject to
$$x^{\mathbbm{n}^2}=0,\;\;g^{\mathbbm{n}^2}=1,\;\;gx=\zeta_{n}xg.$$
Its comultiplication is given as
$$\D(x)=g\otimes x+x\otimes 1,\;\;\;\;\D(g)=g\otimes g.$$
Consider the right ideal $I$ generated by $g^{\mathbbm{n}}-1$. That is,
$$I=(g^{\mathbbm{n}}-1)T_{n}.$$
It is not hard to see that $I$ is a coideal but is NOT an ideal. Define  a twisting
$J$ in the following way}

$$J(g^{i_1}x^{j_1},g^{i_2}x^{j_2})=\left \{
\begin{array}{ll} 0 & \;\;\;\;\emph{if}\; j_1>0 \;\emph{or} \;j_2>0,\\
\zeta_{n}^{i_1(i_2-i_2')}&
\;\;\;\;\emph{if}\; j_1=j_2=0.
\end{array}\right. $$

\emph{Consider the twisted Majid algebra $T_{n}^{J}$ and we denote the new product by $\circ$. In this algebra, we have}
\begin{eqnarray*}
(g^{\mathbbm{n}}-1)\circ x&=& J((g^{\mathbbm{n}}-1)',x')(g^{\mathbbm{n}}-1)''x''J^{-1}((g^{\mathbbm{n}}-1)''',x''')\\
&=&J((g^{\mathbbm{n}}-1),g)x+J(g^{\mathbbm{n}},g)(g^{\mathbbm{n}}-1)x+J(g^{\mathbbm{n}},g)xJ^{-1}((g^{\mathbbm{n}}-1),1)\\
&=&(g^{\mathbbm{n}}-1)x;
\end{eqnarray*}
\begin{eqnarray*}
x\circ(g^{\mathbbm{n}}-1)&=& J(x',(g^{\mathbbm{n}}-1)')x''(g^{\mathbbm{n}}-1)''J^{-1}(x''',(g^{\mathbbm{n}}-1)''')\\
&=&J(g,(g^{\mathbbm{n}}-1))x+J(g,g^{\mathbbm{n}})x(g^{\mathbbm{n}}-1)+J(g,g^{\mathbbm{n}})xJ^{-1}(1,(g^{\mathbbm{n}}-1))\\
&=&(\zeta_{n}^{\mathbbm{n}}-1)x+\zeta_{n}^{\mathbbm{n}}x(g^{\mathbbm{n}}-1)x\\
&=&-x+\zeta_{n}^{\mathbbm{n}}xg^{\mathbbm{n}}\\
&=&(g^{\mathbbm{n}}-1)x.
\end{eqnarray*}

\emph{So we have $(g^{\mathbbm{n}}-1)\circ x=x\circ(g^{\mathbbm{n}}-1)$ in $T_{n}^{J}$. This implies the right ideal $I$ generated by
$g^{\mathbbm{n}}-1$ is a Majid ideal in $T_{n}^{J}$, hence the quotient
$T_{n}^{J}/I$ is a Majid algebra.
It is not hard to verify that
$$T_{n}^{J}/I\cong M(n,1,q),$$
where the latter was considered in Example \ref{ex5.4}.  Using the same method, the readers can realize all $M(n,s,q)$ constructed
in Example \ref{e2.7} as the quotients of twisted Taft algebras.}\qed
\end{example}

Now we give a criteria to determine when both $g_1^{\mathbbm{m}}$
and $g_2^{\mathbbm{n}}$ lie in the center of  $\mathscr{B}^{J}\# kG$.
At first, we set several parameters. Assume that
$$\sigma_{L}(X_{i})=h_{i}\otimes X_{i}$$
$$h_{i}\diamond X_j=q_{ij}X_{j}$$
for $1\leq i,j\leq 2$, $h_{i}\in G$ and $q_{ij}\in k^*$, where $\sigma_L$ (resp. $\diamond$) is the comodule (reap. module)
structure map of $\mathscr{B}(V)\in {^G_G\mathcal{YD}}$. So there are $\alpha_{i},\beta_{i}\in \mathbb{N}$ such that
$$h_{1}=g_1^{\alpha_{1}}g_2^{\alpha_{2}},\;\;h_{2}=g_1^{\beta_{1}}g_2^{\beta_{2}}.$$
Since we always assume that $h_{1},h_{2}$ generate the group $G$, there are $s_{i},t_{i}\in \mathbb{N}$ such that
$$g_{1}=h_1^{s_{1}}h_2^{s_{2}},\;\;g_{2}=h_1^{t_{1}}h_2^{t_{2}}.$$

With these preparations, we have
\begin{proposition}
Both $g_1^{\mathbbm{m}}$ and $g_2^{\mathbbm{n}}$ lie in the center of  $\mathscr{B}^{J}\# kG$ if and only if $a,b,d$ satisfy the equations
\begin{equation}\label{eq5.5}
\begin{split}
\zeta_{m}^{a\alpha_1 \mathbbm{m}}\zeta_{\mathbbm{m}\mathbbm{n}}^{b\alpha_2 \mathbbm{m}}&=q_{11}^{\mathbbm{m}s_1}q_{21}^{\mathbbm{m}s_2}\\
\zeta_{m}^{a\beta_1 \mathbbm{m}}\zeta_{\mathbbm{m}\mathbbm{n}}^{b\beta_2 \mathbbm{m}}&=q_{12}^{\mathbbm{m}s_1}q_{22}^{\mathbbm{m}s_2}\\
\zeta_{n}^{d\alpha_2 \mathbbm{n}}&=q_{11}^{\mathbbm{n}t_1}q_{21}^{\mathbbm{n}t_2}\\
\zeta_{n}^{d\beta_2 \mathbbm{n}}&=q_{12}^{\mathbbm{n}t_1}q_{22}^{\mathbbm{n}t_2}
\end{split}
\end{equation}
\end{proposition}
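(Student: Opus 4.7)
The plan is to reduce the centrality of $g_1^{\mathbbm{m}}$ and $g_2^{\mathbbm{n}}$ in $\mathscr{B}^J\#kG$ to commutation with the two generators $X_1,X_2$ of $\mathscr{B}^J$, and then translate each such commutation into a scalar identity via the explicit formulas for the bosonization product and for $J_{a,b,d}$.

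First, I would exploit that $\pi(g_1^{\mathbbm{m}})=\pi(g_2^{\mathbbm{n}})=1$ in $\mathbbm{G}$. Since the associator of $\mathscr{B}^J\#kG$ is $\pi^{*}(\Phi_{a,b,d})$ and $\Phi_{a,b,d}$ is normalized, every associator value whose argument list contains $g_1^{\mathbbm{m}}$ or $g_2^{\mathbbm{n}}$ in some slot equals $1$. Two consequences follow: the group part $kG$ sits commutatively inside $\mathscr{B}^J\#kG$, and if $g_1^{\mathbbm{m}}$ commutes with each algebra generator of $\mathscr{B}^J$ then it commutes with every element of $\mathscr{B}^J$ by the (here trivial) associativity. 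Since $\mathscr{B}^J$ is a Nichols algebra and hence generated by $V=kX_1\oplus kX_2$, the centrality of $g_1^{\mathbbm{m}}$ and $g_2^{\mathbbm{n}}$ reduces to their commuting with $X_1\otimes 1$ and $X_2\otimes 1$.

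Applying the multiplication formula of Subsection 2.6 to the four relevant products, each of the four associator factors features $1\in G$ in one slot and hence equals $1$. What remains is
\[
(1\otimes g_1^{\mathbbm{m}})(X_i\otimes 1)=(g_1^{\mathbbm{m}}\triangleright^{J}X_i)\otimes g_1^{\mathbbm{m}},\qquad (X_i\otimes 1)(1\otimes g_1^{\mathbbm{m}})=X_i\otimes g_1^{\mathbbm{m}},
\]
so commutation with $X_i$ is equivalent to the scalar identity $g_1^{\mathbbm{m}}\triangleright^{J}X_i=X_i$; the same reduction applies to $g_2^{\mathbbm{n}}$. Unpacking \eqref{ta} gives $g_1^{\mathbbm{m}}\triangleright^{J}X_i=\frac{J_{a,b,d}(g_1^{\mathbbm{m}},h_i)}{J_{a,b,d}(h_i,g_1^{\mathbbm{m}})}\,g_1^{\mathbbm{m}}\diamond X_i$. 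Using $g_1=h_1^{s_1}h_2^{s_2}$ and the diagonal action $h_j\diamond X_i=q_{ji}X_i$, one has $g_1^{\mathbbm{m}}\diamond X_i=q_{1i}^{s_1\mathbbm{m}}q_{2i}^{s_2\mathbbm{m}}X_i$. A direct evaluation of \eqref{eq3.2} shows $J_{a,b,d}(g_1^{\mathbbm{m}},h_i)=1$, since its only potentially nonzero exponent is a multiple of $m=\mathbbm{m}^2$, whereas $J_{a,b,d}(h_1,g_1^{\mathbbm{m}})=\zeta_m^{a\alpha_1\mathbbm{m}}\zeta_{\mathbbm{m}\mathbbm{n}}^{b\alpha_2\mathbbm{m}}$ and $J_{a,b,d}(h_2,g_1^{\mathbbm{m}})=\zeta_m^{a\beta_1\mathbbm{m}}\zeta_{\mathbbm{m}\mathbbm{n}}^{b\beta_2\mathbbm{m}}$. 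Substituting into $g_1^{\mathbbm{m}}\triangleright^{J}X_i=X_i$ recovers the first two equations of \eqref{eq5.5}, and a parallel calculation using $g_2=h_1^{t_1}h_2^{t_2}$ and the $\zeta_n$-factor of \eqref{eq3.2} produces the last two.

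The main obstacle is purely bookkeeping: one has to cleanly distinguish the twisted action $\triangleright^{J}$ in ${_{G}^{G}\mathcal{YD}^{\partial(J)}}$ from the untwisted $\diamond$ in ${_{G}^{G}\mathcal{YD}}$, and the floor-function contributions in \eqref{eq3.2} must be tracked carefully so that all exponents divisible by $m$ or $n$ drop out, leaving \eqref{eq5.5} in its stated form.
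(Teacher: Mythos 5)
Your proposal is correct and follows essentially the same route as the paper: reduce centrality to commutation with $X_1,X_2$, compute the two products, and compare the resulting scalars, with $J_{a,b,d}(g_1^{\mathbbm{m}},h_i)=J_{a,b,d}(g_2^{\mathbbm{n}},h_i)=1$ and $J_{a,b,d}(h_i,g_1^{\mathbbm{m}})$, $J_{a,b,d}(h_i,g_2^{\mathbbm{n}})$ supplying the left-hand sides of \eqref{eq5.5}. The only cosmetic difference is that you phrase the computation via the Majid bosonization product and the twisted action \eqref{ta} in $\mathscr{B}^{J}\#kG$, whereas the paper computes $g_1^{\mathbbm{m}}\circ X_i$ and $X_i\circ g_1^{\mathbbm{m}}$ directly in $(\mathscr{B}\#kG)^{J}$; these are the same calculation under the identification $(\mathscr{B}\#kG)^{J}=\mathscr{B}^{J}\#kG$.
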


\begin{proof}
This is a consequence of direct computations. Indeed, $$g_1^{\mathbbm{m}}\circ X_1=
J(g_1^{\mathbbm{m}},g_1^{\alpha_{1}}g_2^{\alpha_{2}})g_1^{\mathbbm{m}}X_1=\zeta_{m}^{a\mathbbm{m}(\alpha_1- \alpha'_1)}g_1^{\mathbbm{m}}X_1
=q_{11}^{\mathbbm{m}s_1}q_{21}^{\mathbbm{m}s_2}X_1g_1^{\mathbbm{m}}$$
and
$$X_1\circ g_1^{\mathbbm{m}}=
J(g_1^{\alpha_{1}}g_2^{\alpha_{2}},g_1^{\mathbbm{m}})X_1g_1^{\mathbbm{m}}=\zeta_{m}^{a\alpha_1 \mathbbm{m}}\zeta_{\mathbbm{m}\mathbbm{n}}^{b\alpha_2 \mathbbm{m}}X_1g_1^{\mathbbm{m}}.$$
So $g_1^{\mathbbm{m}}\circ X_1=X_1\circ g_1^{\mathbbm{m}}$ if and only if $$\zeta_{m}^{a\alpha_1 \mathbbm{m}}\zeta_{\mathbbm{m}\mathbbm{n}}^{b\alpha_2 \mathbbm{m}}=q_{11}^{\mathbbm{m}s_1}q_{21}^{\mathbbm{m}s_2}.$$
Similarly, $g_1^{\mathbbm{m}}\circ X_2=X_2\circ g_1^{\mathbbm{m}}$ if and only if $$\zeta_{m}^{a\beta_1 \mathbbm{m}}\zeta_{\mathbbm{m}\mathbbm{n}}^{b\beta_2 \mathbbm{m}}=q_{12}^{\mathbbm{m}s_1}q_{22}^{\mathbbm{m}s_2};$$ and
$g_2^{\mathbbm{n}}\circ X_1=X_1\circ g_2^{\mathbbm{n}}$ if and only if $$\zeta_{n}^{d\alpha_2 \mathbbm{n}}=q_{11}^{\mathbbm{n}t_1}q_{21}^{\mathbbm{n}t_2};$$ and
$g_2^{\mathbbm{n}}\circ X_2=X_2\circ g_2^{\mathbbm{n}}$ if and only if $$\zeta_{n}^{d\beta_2 \mathbbm{n}}=q_{12}^{\mathbbm{n}t_1}q_{22}^{\mathbbm{n}t_2}.$$
\end{proof}

\begin{definition}
Equations \eqref{eq5.5} are called soluble if there exist integers $0\leq a,b<\m, \ 0\leq d<\n$ such that \eqref{eq5.5} hold.
\end{definition}

Now we can add a return road map to Figure I and complete the circuit as follows:

\begin{figure}[hbt]
\begin{picture}(180,115)(0,0)
\put(0,90){\makebox(0,0){$ \MM=\R\#k\mathbbm{G}\;\;\;$ \textsf{Original Majid algebra}}}     \put(150,90){\makebox(0,0){$ \MM=\mathscr{B}^{J^{-1}}\#k\mathbbm{G}$ }}
\put(-50,85){\vector(0,-1){30}}                                                              \put(150,55){\vector(0,1){30}} \put(200,70){\makebox(0,0){\small{Eq. \eqref{eq5.5} soluble}}}
\put(0,50){\makebox(0,0){$ {\textbf{M}}=\R\#kG\;\;\;$ \textsf{Bigger Majid algebra}}}           \put(150,50){\makebox(0,0){$ \textbf{H}^{J^{-1}}=\mathscr{B}^{J^{-1}}\#kG$ }}
\put(-50,45){\vector(0,-1){30}}\put(-40,30){\makebox(0,0){\small\textsf{ $\exists\, J$}}}  \put(150,15){\vector(0,1){30}}
\put(0,10){\makebox(0,0){$ \textbf{H}=(\R\#kG)^{J}\;\;\;$ \textsf{Ordinary Hopf algebra}}}      \put(150,10){\makebox(0,0){$ \textbf{H}=\mathscr{B}\#kG$ }}
\put(90,-10){\makebox(0,0){\textrm{Figure II}}}
\end{picture}\\[4mm]
\end{figure}

\subsection{Solutions of equations \eqref{eq5.5}}
In this subsection, we will show that equations \eqref{eq5.5} have at most one solution and give a criterion to determine when equations \eqref{eq5.5} do have a solution.

Keep the notations of Subsection 5.3. Call $\mathscr{B}(V)$ a \emph{connected} rank $2$ Nichols algebra in $^{{G}}_{{G}}\mathcal{YD}$ if the ordinary pointed Hopf algebra $\mathscr{B}(V)\# kG$ is connected. As before, let $V=kX_1\oplus kX_2$ be the direct sum of $1$-dimensional Yetter-Drinfeld modules.  Therefore, there exist $ h_i\in G$ and $q_{ij}\in k^{\ast}$ such that
$$\sigma_L(X_i)=h_i \otimes X_i,\ \  h_{i}\diamond X_j=q_{ij}X_j$$
for $1\leq i,j\leq 2$, where $\sigma_{L}$ and $\diamond$ are structure maps of $V\in{^{{G}}_{{G}}\mathcal{YD}}$.

Assume that $h_1=g_1^{\alpha_1}g_2^{\alpha_2}, \ h_2=g_1^{\beta_1}g_2^{\beta_2}$ for some
$\alpha_{i},\beta_{i}\in \N$. As $h_1$ and $h_2$ generate $G$ (due to the connectedness of  $\mathscr{B}(V)\# kG$), there exist $0\leq s_1,t_1< m, 0\leq s_2, t_2 < n$ such that $g_1=h_1^{s_1}h_2^{s_2},\ g_2=h_1^{t_1}h_2^{t_2}.$  Therefore, we have

\begin{equation} \label{eq6.4} \left(\begin{array}{cc}\alpha_1 & \beta_1 \\ \alpha_2 & \beta_2  \end{array}\right) \left(\begin{array}{cc}s_1 & t_1 \\ s_2 & t_2  \end{array}\right)\equiv \left(\begin{array}{cc}1\ (\text{mod} \ m) & 0 \ (\text{mod}\ m) \\ 0\ (\text{mod}\ n)& 1\ (\text{mod}\ n)  \end{array}\right)\end{equation}

Note that \[g_1\diamond X_1=\zeta_{m}^{x_{11}}X_1,\quad g_1\diamond X_2=\zeta_{m}^{x_{12}}X_2,\quad g_2\diamond X_1=\zeta_{n}^{x_{21}}X_1,\quad g_2\diamond X_2=\zeta_{n}^{x_{22}}X_2\] for some $0\leq x_{11},x_{12}<{m}, \ 0\leq x_{21},x_{22}<n.$ Hence we have
\begin{equation}\label{eq6.2}
\begin{split}
q_{11}&=\zeta_{m}^{x_{11}\alpha_1}\zeta_{n}^{x_{21}\alpha_2},\\
q_{12}&=\zeta_{m}^{x_{12}\alpha_1}\zeta_{n}^{x_{22}\alpha_2},\\
q_{21}&=\zeta_{m}^{x_{11}\beta_1}\zeta_{n}^{x_{21}\beta_2},\\
q_{22}&=\zeta_{m}^{x_{12}\beta_1}\zeta_{n}^{x_{22}\beta_2}.
\end{split}
\end{equation}

Now equations \eqref{eq5.5} read as

\begin{equation}\label{eq6.6}
\begin{split}
\zeta_{m}^{a\alpha_1 \mathbbm{m}}\zeta_{\mathbbm{m}\mathbbm{n}}^{b\alpha_2 \mathbbm{m}}&=\zeta_{m}^{\mathbbm{m}x_{11}(\alpha_{1}s_1+\beta_{1}s_2)}
\zeta_{n}^{\mathbbm{m}x_{21}(\alpha_{2}s_1+\beta_{2}s_2)}\\
\zeta_{m}^{a\beta_1 \mathbbm{m}}\zeta_{\mathbbm{m}\mathbbm{n}}^{b\beta_2 \mathbbm{m}}&=\zeta_{m}^{\mathbbm{m}x_{12}(\alpha_{1}s_1+\beta_{1}s_2)}
\zeta_{n}^{\mathbbm{m}x_{22}(\alpha_{2}s_1+\beta_{2}s_2)}\\
\zeta_{n}^{d\alpha_2 \mathbbm{n}}&=\zeta_{m}^{\mathbbm{n}x_{11}(\alpha_{1}t_1+\beta_{1}t_2)}
\zeta_{n}^{\mathbbm{n}x_{21}(\alpha_{2}t_1+\beta_{2}t_2)}\\
\zeta_{n}^{d\beta_2 \mathbbm{n}}&=\zeta_{m}^{\mathbbm{n}x_{12}(\alpha_{1}t_1+\beta_{1}t_2)}
\zeta_{n}^{\mathbbm{n}x_{22}(\alpha_{2}t_1+\beta_{2}t_2)}
\end{split}
\end{equation}

By \eqref{eq6.4}, equations \eqref{eq6.6} can be simplified as
\begin{equation}\label{eq6.7}
\begin{split}
\zeta_{m}^{a\alpha_1 \mathbbm{m}}\zeta_{\mathbbm{m}\mathbbm{n}}^{b\alpha_2 \mathbbm{m}}&=\zeta_{m}^{\mathbbm{m}x_{11}}\\
\zeta_{m}^{a\beta_1 \mathbbm{m}}\zeta_{\mathbbm{m}\mathbbm{n}}^{b\beta_2 \mathbbm{m}}&=\zeta_{m}^{\mathbbm{m}x_{12}}\\
\zeta_{n}^{d\alpha_2 \mathbbm{n}}&=\zeta_{n}^{\mathbbm{n}x_{21}}\\
\zeta_{n}^{d\beta_2 \mathbbm{n}}&=\zeta_{n}^{\mathbbm{n}x_{22}}
\end{split}
\end{equation}
which clearly are equivalent to following congruence equations
\begin{equation}\label{eq6.8}
\begin{split}
a\alpha_1\frac{\mathbbm{n}}{\mathbbm{m}}+b\alpha_2 &\equiv x_{11}\frac{\mathbbm{n}}{\mathbbm{m}}\ (\text{mod} \ \mathbbm{n})\\
a\beta_1\frac{\mathbbm{n}}{\mathbbm{m}}+b\beta_2&\equiv x_{12}\frac{\mathbbm{n}}{\mathbbm{m}}\ (\text{mod} \ \mathbbm{n})\\
d\alpha_2&\equiv x_{21} \ (\text{mod} \ \mathbbm{n})\\
d\beta_2&\equiv x_{22} \ (\text{mod} \ \mathbbm{n})
\end{split}
\end{equation}

Using \eqref{eq6.4} again, by multiplying the first (resp. second) congruence equation by $s_1$ (resp. $s_2$) and take the
sum of them, we have
\begin{equation} a=(x_{11}s_1+x_{12}s_2)'.
\end{equation}
Similarly, we have
$$b=(\frac{\mathbbm{n}}{\mathbbm{m}}(x_{11}t_1+x_{12}t_2))',\;\; d=(t_1x_{21}+t_2x_{22})'',$$
here as before, $x'$ is the remainder of $x$ divided by $\mathbbm{m}$ (resp. $x''$ is the remainder of $x$ divided by $\mathbbm{n}$).

In summary, we have

\begin{proposition} \label{p5.12} Equations \eqref{eq5.5} have at most one solution in the following form:
\begin{equation}\label{eq5.14} a=(x_{11}s_1+x_{12}s_2)',\;\;b=(\frac{\mathbbm{n}}{\mathbbm{m}}(x_{11}t_1+x_{12}t_2))',\;\; d=(t_1x_{21}+t_2x_{22})''.
\end{equation}
\end{proposition}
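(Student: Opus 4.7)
The plan is to convert the multiplicative equations \eqref{eq5.5} into the additive congruence system \eqref{eq6.8} via discrete logarithms, then solve the congruences by exploiting the matrix identity \eqref{eq6.4}, which says that $A = \bigl(\begin{smallmatrix}\alpha_1 & \beta_1 \\ \alpha_2 & \beta_2\end{smallmatrix}\bigr)$ and $S = \bigl(\begin{smallmatrix}s_1 & t_1 \\ s_2 & t_2\end{smallmatrix}\bigr)$ are mutual inverses modulo the appropriate orders. Since $a$, $b$, $d$ are required to lie in the fixed residue ranges $0 \leq a, b < \mathbbm m$ and $0 \leq d < \mathbbm n$, producing an explicit residue formula for each of them immediately yields uniqueness.

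The first step is to substitute the parametrization \eqref{eq6.2} for the scalars $q_{ij}$ into \eqref{eq5.5} and unpack the resulting exponents of $\zeta_m, \zeta_n, \zeta_{\mathbbm m \mathbbm n}$. This produces \eqref{eq6.6}. On the right-hand side one encounters the sums $\alpha_i s_1 + \beta_i s_2$ and $\alpha_i t_1 + \beta_i t_2$, which by \eqref{eq6.4} reduce to Kronecker deltas modulo $m$ and $n$ respectively. After absorbing the prefactors $\mathbbm m$ and $\mathbbm n$ already present, this collapse yields \eqref{eq6.7}, and comparing exponents against the respective primitive roots of unity converts the multiplicative identities into the linear congruence system \eqref{eq6.8}.

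To solve \eqref{eq6.8}, I would apply Cramer-type linear combinations. For $a$, multiplying the first congruence by $s_1$, the second by $s_2$, and summing uses \eqref{eq6.4} to annihilate the coefficient of $b$ modulo $\mathbbm m$ and to leave the coefficient of $a$ as a unit multiple of $\frac{\mathbbm n}{\mathbbm m}$ modulo $\mathbbm n$; this forces $a \equiv x_{11}s_1 + x_{12}s_2 \pmod{\mathbbm m}$, i.e., $a = (x_{11}s_1 + x_{12}s_2)'$. Weighting the same two congruences by $(t_1, t_2)$ instead kills the $a$-contribution and isolates $b$; applying $(t_1, t_2)$ to the third and fourth congruences isolates $d$ analogously. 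Together these give the three formulas in \eqref{eq5.14}.

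The main technical obstacle will be the modular bookkeeping: when the mixed-order factor $\frac{\mathbbm n}{\mathbbm m}$ interacts with congruences whose natural moduli are $m = \mathbbm m^2$ or $n = \mathbbm n^2$, one must check that the cancellations from \eqref{eq6.4} really occur at the correct finer level $\mathbbm m$ or $\mathbbm n$, and not merely modulo a proper divisor thereof. Once this verification is completed, any solution of \eqref{eq5.5} is forced into the form \eqref{eq5.14}, so there is at most one solution, as claimed.
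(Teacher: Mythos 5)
Your proposal is correct and follows essentially the same route as the paper: substitute the parametrization \eqref{eq6.2} into \eqref{eq5.5}, use \eqref{eq6.4} to collapse \eqref{eq6.6} to \eqref{eq6.7} and hence to the congruence system \eqref{eq6.8}, then take the $(s_1,s_2)$-weighted (resp.\ $(t_1,t_2)$-weighted) combinations of the congruences to isolate $a$ (resp.\ $b$ and $d$), which pins each of them to a single residue in its prescribed range. The modular bookkeeping you flag (that the relations of \eqref{eq6.4}, stated modulo $m=\mathbbm{m}^2$ and $n=\mathbbm{n}^2$, still produce the needed cancellations modulo $\mathbbm{n}$ after multiplication by $\frac{\mathbbm{n}}{\mathbbm{m}}$) does go through, and is the same implicit verification underlying the paper's derivation of \eqref{eq5.14}.
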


\begin{remark}
It is possible that equations \eqref{eq5.5} have no solution. In fact, take $\alpha_1=1,\ \alpha_2 =1,\ \beta_1=0,\ \beta_2=1,\ x_{21}=0,\ x_{22}=1$ for example, then the last two equations in \eqref{eq6.8} clearly have no solution.\end{remark}

The last problem is to determine when equations \eqref{eq5.5} do have a solution. This is settled as follows.
\begin{proposition} \label{p5.14} Keep the above notations. Equations \eqref{eq5.5} have a solution if and only if
\begin{equation}\label{eq5.15} x_{22}\alpha_{2}-x_{21}\beta_{2}\equiv 0\ (\emph{mod}\ \mathbbm{n}).
\end{equation}
\end{proposition}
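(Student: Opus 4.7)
The plan is to apply Proposition \ref{p5.12} in reverse: since that proposition already pins down the unique candidate triple $(a,b,d)$, the system \eqref{eq5.5} (equivalently \eqref{eq6.8}) admits a solution if and only if the candidate from \eqref{eq5.14} actually verifies all four congruences. The argument naturally splits: the first two congruences of \eqref{eq6.8} will hold unconditionally (they are essentially encoded by the companion matrix identity to \eqref{eq6.4}), while the last two simultaneously collapse to \eqref{eq5.15}.

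For the first congruence of \eqref{eq6.8}, I would substitute $a \equiv x_{11}s_1 + x_{12}s_2 \pmod{\mathbbm{m}}$ and $b \equiv \frac{\mathbbm{n}}{\mathbbm{m}}(x_{11}t_1 + x_{12}t_2) \pmod{\mathbbm{m}}$ and reorganise the LHS into
\[
\frac{\mathbbm{n}}{\mathbbm{m}}\left[x_{11}(s_1\alpha_1 + t_1\alpha_2) + x_{12}(s_2\alpha_1 + t_2\alpha_2)\right] \pmod{\mathbbm{n}}.
\]
Since the two matrices appearing in \eqref{eq6.4} represent mutually inverse automorphisms of $G=\Z_m\times\Z_n$, the reverse product also equals the identity, giving $s_1\alpha_1 + t_1\alpha_2 \equiv 1 \pmod m$ and $s_2\alpha_1 + t_2\alpha_2 \equiv 0 \pmod n$. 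Because $\mathbbm{m} \mid m$ and $\mathbbm{m}\mid\mathbbm{n}\mid n$, both pass to modulus $\mathbbm{m}$, the bracketed expression collapses to $x_{11}$, and we recover $x_{11}\frac{\mathbbm{n}}{\mathbbm{m}}$ modulo $\mathbbm{n}$, exactly the RHS. The second congruence is handled identically via the analogous identities $s_1\beta_1 + t_1\beta_2 \equiv 0 \pmod m$ and $s_2\beta_1 + t_2\beta_2 \equiv 1 \pmod n$.

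Next I turn to the third and fourth congruences. Substituting $d \equiv t_1 x_{21} + t_2 x_{22} \pmod{\mathbbm{n}}$ and using the relation $\alpha_2 t_1 + \beta_2 t_2 \equiv 1 \pmod{\mathbbm{n}}$ (inherited from the mod-$n$ entry in \eqref{eq6.4}) to eliminate $\alpha_2 t_1$ recasts the third congruence as
\[
t_2(x_{22}\alpha_2 - x_{21}\beta_2) \equiv 0 \pmod{\mathbbm{n}},
\]
while the symmetric elimination of $\beta_2 t_2$ recasts the fourth as $t_1(x_{22}\alpha_2 - x_{21}\beta_2) \equiv 0 \pmod{\mathbbm{n}}$.

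The decisive final step, which is the genuine content of the proposition, is to show that the joint vanishing of these two relations is equivalent to \eqref{eq5.15}. Setting $Y := x_{22}\alpha_2 - x_{21}\beta_2$, the pair $t_1 Y \equiv t_2 Y \equiv 0 \pmod{\mathbbm{n}}$ is clearly equivalent to $\gcd(t_1, t_2)\cdot Y \equiv 0 \pmod{\mathbbm{n}}$. The relation $\alpha_2 t_1 + \beta_2 t_2 \equiv 1 \pmod{n}$ immediately forces $\gcd(t_1, t_2, n) = 1$, and since $\mathbbm{n} \mid n$ we deduce $\gcd(t_1, t_2, \mathbbm{n}) = 1$ as well, making $\gcd(t_1, t_2)$ a unit modulo $\mathbbm{n}$. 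Therefore the joint vanishing is equivalent to $Y \equiv 0 \pmod{\mathbbm{n}}$, which is exactly \eqref{eq5.15}. The only subtlety requiring attention throughout is keeping the three moduli $m = \mathbbm{m}^2$, $n = \mathbbm{n}^2$ and $\mathbbm{n}$ straight at each substitution; once the two-sided form of \eqref{eq6.4} is invoked, every remaining step is routine.
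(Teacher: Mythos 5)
Your overall strategy coincides with the paper's: both directions reduce to checking the unique candidate $(a,b,d)$ of Proposition \ref{p5.12}, the last two congruences of \eqref{eq6.8} are massaged into $t_2(x_{22}\alpha_2-x_{21}\beta_2)\equiv 0$ and $t_1(x_{22}\alpha_2-x_{21}\beta_2)\equiv 0 \pmod{\mathbbm{n}}$, and your unit-$\gcd$ argument for deducing $x_{22}\alpha_2-x_{21}\beta_2\equiv 0$ is a correct (and essentially equivalent) variant of the paper's trick of multiplying by $\alpha_2t_1+\beta_2t_2\equiv 1$. That part is fine.

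The genuine gap is in your treatment of the first two congruences, precisely at the step you dismiss as "essentially encoded by the companion matrix identity to \eqref{eq6.4}." The assertion that the two matrices "represent mutually inverse automorphisms of $G$" is not valid: the assignment $g_1\mapsto h_1$, $g_2\mapsto h_2$ need not define an endomorphism of $G=\Z_m\times\Z_n$ at all, since $h_1=g_1^{\alpha_1}g_2^{\alpha_2}$ need not have order dividing $m$ (already $h_1=g_2$ is possible). Relatedly, \eqref{eq6.4} is not a matrix equation over a single commutative ring --- its first row is a congruence mod $m$ and its second row mod $n$ --- so the principle "$AB=I$ implies $BA=I$" cannot be invoked. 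In fact the specific identities you write for the reverse product, namely $s_2\alpha_1+t_2\alpha_2\equiv 0$ and $s_2\beta_1+t_2\beta_2\equiv 1 \pmod n$, are stronger than what is true in general: because $\langle h_1\rangle\cap\langle h_2\rangle$ can be nontrivial, a relation $h_1^uh_2^v=1$ only forces $m\mid u$ and $m\mid v$, not $n\mid v$. This is exactly why the paper isolates the Claim \eqref{eq5.16}, which asserts the reverse product is the identity only modulo $m$ in all four entries, and proves it via the structure result Lemma \ref{l3} on generating pairs of $\Z_m\times\Z_n$. Since $\mathbbm{m}\mid m$, that weaker statement still suffices to kill the bracketed expression modulo $\mathbbm{m}$ and hence to verify the first two congruences of \eqref{eq6.8}; but some such argument must be supplied, and your proposal currently replaces the one genuinely nontrivial lemma in this direction with an incorrect heuristic.
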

\begin{proof} ``$\Leftarrow$" It is enough to show that \eqref{eq5.14} is exactly a solution of \eqref{eq6.8}.  Consider the last two equations in \eqref{eq6.8}, we have
\begin{eqnarray*} d\alpha_2-x_{21} &\equiv& (t_1x_{21}+t_2x_{22}) \alpha_2-x_{21}\ (\text{mod}\ \mathbbm{n})\\
&\equiv& (1-\beta_2t_2)x_{21}+t_2x_{22}\alpha_2-x_{21}\ (\text{mod}\ \mathbbm{n})\\
&\equiv& t_2(x_{22}\alpha_{2}-x_{21}\beta_{2})\ (\text{mod}\ \mathbbm{n})\\
&\equiv& 0 \quad (\text{mod}\ \mathbbm{n}),
\end{eqnarray*}
and
\begin{eqnarray*} d\beta_2-x_{22} &\equiv& (t_1x_{21}+t_2x_{22}) \beta_2-x_{22}\ (\text{mod}\ \mathbbm{n})\\
&\equiv& t_1x_{21}\beta_2+(1-\alpha_2t_1)x_{22}-x_{22}\ (\text{mod}\ \mathbbm{n})\\
&\equiv& t_1(-x_{22}\alpha_{2}+x_{21}\beta_{2})\ (\text{mod}\ \mathbbm{n})\\
&\equiv& 0 \quad (\text{mod}\ \mathbbm{n}).
\end{eqnarray*}
Therefore, the last two equations of \eqref{eq6.8} are satisfied. Next we consider the first two equations of \eqref{eq6.8}.
To show this, we need  the following claim.\\[2mm]
\emph{Claim: We always have }
\begin{equation}\label{eq5.16} \left(\begin{array}{cc}s_1 & t_1 \\ s_2 & t_2  \end{array}\right) \left(\begin{array}{cc}\alpha_1 & \beta_1 \\ \alpha_2 & \beta_2  \end{array}\right) \equiv \left(\begin{array}{cc}1\ (\text{mod} \ m) & 0 \ (\text{mod}\ m) \\ 0\ (\text{mod}\ m)& 1\ (\text{mod}\ m)  \end{array}\right).\end{equation}
\emph{Proof of this claim: } Note that the reason for equation \eqref{eq6.4} fulfilled is that if $g_1=g_{1}^{s}g_{2}^{t},\
g_2=g_{1}^{x}g_{2}^{y}$ then $s\equiv 1\ (\text{mod}\ m), t\equiv 0\ (\text{mod}\ n), x\equiv 0\ (\text{mod}\ m), y\equiv 1\ (\text{mod}\ n)$. So, to show \eqref{eq5.16} it is enough to show that
$$h_1=h_{1}^{s}h_{2}^{t},\
h_2=h_{1}^{x}h_{2}^{y}$$
will imply that $s\equiv 1\ (\text{mod}\ m), t\equiv 0\ (\text{mod}\ m), x\equiv 0\ (\text{mod}\ m), y\equiv 1\ (\text{mod}\ m)$.
We only consider the case $h_1=h_{1}^{s}h_{2}^{t}$ since the other one is similar. Clearly, it is equivalent to showing that if
$h_{1}^{s}h_{2}^{t}=1$ then $m|s$ and $m|t$. We use Lemma \ref{l3} to prove this. Here we use notations of Lemma \ref{l3} freely. In order to not cause confusion, we point out that our $h_1$ (resp. $h_2$) is just the element  $g$ (resp. $h$) of Lemma
\ref{l3}. Therefore by Lemma \ref{l3}, $g^{s}h^{t}=1$ implies that
$$m_1|t,\ \ m_2| (s+tb),\ \ n_1|s,\ \ n_2|(sa+t).$$
From $m_1|t,\ m_2| (s+tb)$ and $m_1|m_2$, it follows that $m_1|s$. Using $n_1|s$ and $(m_1,n_1)=1$, we have $m=m_1n_1|s$. Similarly, one can show that $m_1n_1|t$.

We return to the proof of this proposition. Now, we have
\begin{eqnarray*} &&a\alpha_1\frac{\mathbbm{n}}{\mathbbm{m}}+b\alpha_2-x_{11}\frac{\mathbbm{n}}{\mathbbm{m}}\\
&\equiv& (x_{11}s_1+x_{12}s_2)\alpha_1\frac{\mathbbm{n}}{\mathbbm{m}}+(x_{11}t_1+x_{12}t_2)\alpha_2\frac{\mathbbm{n}}{\mathbbm{m}}-x_{11}\frac{\mathbbm{n}}{\mathbbm{m}}  \ (\text{mod}\ \mathbbm{n})\\
&\equiv& \frac{\mathbbm{n}}{\mathbbm{m}}[x_{11}(s_1\alpha_1+t_1\alpha_2)+x_{12}(s_2\alpha_1+t_2\alpha_2)-x_{11}]\ (\text{mod}\ \mathbbm{n})\\
&\equiv&\frac{\mathbbm{n}}{\mathbbm{m}}[x_{11}-x_{11}]\ (\text{mod}\ \mathbbm{n})\\
&\equiv& 0 \quad (\text{mod}\ \mathbbm{n}),
\end{eqnarray*}
where in the third ``$\equiv$" we use equation \eqref{eq5.16}.  The second equation of \eqref{eq6.8} can be verified similarly:
\begin{eqnarray*} &&a\beta_1\frac{\mathbbm{n}}{\mathbbm{m}}+b\beta_2-x_{12}\frac{\mathbbm{n}}{\mathbbm{m}}\\
&\equiv& (x_{11}s_1+x_{12}s_2)\beta_1\frac{\mathbbm{n}}{\mathbbm{m}}+(x_{11}t_1+x_{12}t_2)\beta_2\frac{\mathbbm{n}}{\mathbbm{m}}-x_{12}\frac{\mathbbm{n}}{\mathbbm{m}}  \ (\text{mod}\ \mathbbm{n})\\
&\equiv& \frac{\mathbbm{n}}{\mathbbm{m}}[x_{11}(s_1\beta_1+t_1\beta_2)+x_{12}(s_2\beta_1+t_2\beta_2)-x_{12}]\ (\text{mod}\ \mathbbm{n})\\
&\equiv&\frac{\mathbbm{n}}{\mathbbm{m}}[x_{12}-x_{12}]\ (\text{mod}\ \mathbbm{n})\\
&\equiv& 0 \quad (\text{mod}\ \mathbbm{n}).
\end{eqnarray*}

``$\Rightarrow$"  By the proof of the sufficiency, we know that
$$t_1(-x_{22}\alpha_{2}+x_{21}\beta_{2})\equiv 0 \equiv t_2(x_{22}\alpha_{2}-x_{21}\beta_{2}) \quad (\text{mod}\ \mathbbm{n}).$$
Therefore,
\begin{eqnarray*} x_{22}\alpha_{2}-x_{21}\beta_{2} &\equiv& (\alpha_2t_1+\beta_2t_2)(  x_{22}\alpha_{2}-x_{21}\beta_{2})\ (\text{mod}\ \mathbbm{n})\\
&\equiv& \alpha_2 t_1(-x_{22}\alpha_{2}+x_{21}\beta_{2})+\beta_2 t_2(x_{22}\alpha_{2}-x_{21}\beta_{2}) \ (\text{mod}\ \mathbbm{n})\\
&\equiv& 0 \quad (\text{mod}\ \mathbbm{n}).
\end{eqnarray*}
\end{proof}

\subsection{Main result}
Keep the notations of Subsections 5.3 and 5.4.

\begin{definition}
A rank $2$ connected Nichols algebra $\mathscr{B}(V)\in {^{G}_{G}\mathcal{YD}}$ is said to be of \emph{Majid type} if equation \eqref{eq5.15} is fulfilled.
\end{definition}

Now let $\mathscr{B}(V) \in {^{G}_{G}\mathcal{YD}}$ be a Nichols algebra of Majid type and we may choose $3$ natural numbers $a,b,d$ in the form of \eqref{eq5.14}. By Figure II, we get a Majid algebra
$$\MM(V,G):=\mathscr{B}(V)^{J_{a,b,d}}\# k{G}/(\mathbbm{g}_1^{\m}-1,\mathbbm{g}_2^{\n}-1)=\mathscr{B}(V)^{J_{a,b,d}}\# k\mathbb{G}.$$

 \begin{definition} The Majid algebra $\MM(V,G)$ is called the \emph{associated Majid algebra} of the Nichols algebra $\mathscr{B}(V)\in {^{G}_{G}\mathcal{YD}}$ of Majid type. In addition, if one of $a,b,d$ is not zero, then we call $\MM(V,G)$  the \emph{associated genuine Majid algebra} of  $\mathscr{B}(V)$.
 \end{definition}

 Now, our main result can be stated as follows.

 \begin{theorem}\label{t5.17}
 Keep all the previous assumptions and notations.
 \begin{itemize}
 \item[(1)] $\MM(V,G)$ is a finite-dimensional connected coradically graded pointed Majid algebra of rank $2$;
 \item[(2)] Let $\MM$ be a finite-dimensional connected coradically graded pointed Majid algebra of rank $2$. Then $\MM$ is isomorphic to some $\MM(V,G)$ associated to an appropriate Nichols algebra $\mathscr{B}(V)\in {^{G}_{G}\mathcal{YD}}$ of Majid type. Moreover, $\MM$ is genuine if and only if $\MM(V,G)$ is genuine.
\end{itemize}
 \end{theorem}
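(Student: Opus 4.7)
The plan is to make Figure II rigorous, combining the structural results of Sections 2--4 with the computations of Subsections 5.2--5.4; almost everything needed has been proved, so the theorem is largely a careful assembly.

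For part (1), I would traverse the right side of Figure II downward. Start with the ordinary Hopf algebra $\mathscr{B}(V)\#kG$, twist by $J_{a,b,d}$ to obtain the Majid algebra $\mathscr{B}(V)^{J_{a,b,d}}\#kG$, whose associator is $\partial(J_{a,b,d})=\pi^{\ast}(\Phi_{a,b,d})$ by Proposition \ref{l3.8}, and then pass to the quotient by the right ideal generated by $g_{1}^{\m}-1$ and $g_{2}^{\n}-1$. The Majid-type hypothesis \eqref{eq5.15}, via Proposition \ref{p5.14}, ensures solubility of \eqref{eq5.5}, and by Proposition \ref{p5.12} the unique solution is exactly \eqref{eq5.14}. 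The centrality criterion of Subsection 5.3 then guarantees that $g_{1}^{\m}$ and $g_{2}^{\n}$ are central in $\mathscr{B}(V)^{J_{a,b,d}}\#kG$, so the ideal is a Majid ideal and the quotient $\MM(V,G)$ inherits a Majid-algebra structure. The remaining properties are routine: finite-dimensionality from $\dim\mathscr{B}(V)<\infty$ and $|\mathbbm{G}|<\infty$; coradical gradation and connectedness inherited from $\mathscr{B}(V)\#kG$, since twisting leaves the coalgebra intact and the quotient only modifies the degree-zero layer; and rank $2$ from $\dim V=2$.

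For part (2), I would traverse the left side of Figure II upward. Proposition \ref{p4.5} gives the factorization $\MM=\R\#k\mathbbm{G}$ with $\R$ a Hopf algebra in $^{\mathbbm{G}}_{\mathbbm{G}}\mathcal{YD}^{\Phi}$, and Proposition \ref{np1} identifies $\R=\mathscr{B}(\R_{1})$. Put $G=\mathbbm{Z}_{m}\times\mathbbm{Z}_{n}$ with $m=\m^{2}$, $n=\n^{2}$ and invoke Lemma \ref{nl2} to lift $\mathscr{B}(\R_{1})$ to $\mathscr{B}(\widetilde{\R_{1}})\in {^{G}_{G}\mathcal{YD}^{\pi^{\ast}(\Phi)}}$. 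Writing $\pi^{\ast}(\Phi_{a,b,d})=\partial(J_{a,b,d})$ via Proposition \ref{l3.8}, define $V:=\widetilde{\R_{1}}^{J_{a,b,d}^{-1}}\in {^{G}_{G}\mathcal{YD}}$ and note that by Lemma \ref{tn} we have $\mathscr{B}(V)\cong\mathscr{B}(\widetilde{\R_{1}})^{J_{a,b,d}^{-1}}$, which is an ordinary Nichols algebra. Since $\MM$ is itself the quotient of $\mathscr{B}(V)^{J_{a,b,d}}\#kG$ by $g_{1}^{\m}-1$ and $g_{2}^{\n}-1$, the centrality criterion of Subsection 5.3 forces equations \eqref{eq5.5} to hold for these particular $a,b,d$, and Proposition \ref{p5.14} yields \eqref{eq5.15}, so $\mathscr{B}(V)$ is of Majid type. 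Proposition \ref{p5.12} confirms the values of $a,b,d$ coincide with \eqref{eq5.14}, and unwrapping the construction gives $\MM\cong\MM(V,G)$.

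For the genuineness statement, the isomorphism from (2) makes $\MM$ and $\MM(V,G)$ share a single twist-equivalence class, so it suffices to verify that $\MM(V,G)$ is genuine precisely when at least one of $a,b,d$ is nonzero. When $a=b=d=0$, the associator is trivial and $\MM(V,G)=\mathscr{B}(V)\#k\mathbbm{G}$ is already an ordinary Hopf algebra. Conversely, when some of $a,b,d$ is nonzero, Lemma \ref{l3.6} says $\Phi_{a,b,d}$ represents a nontrivial class in $H^{3}(\mathbbm{G},k^{\ast})$; since any twisting modifies the associator only by a coboundary and a Hopf algebra has trivial associator, no twist can carry $\MM(V,G)$ to a Hopf algebra. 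The main technical hurdle I anticipate is in part (2): keeping the various twistings, the group projection $\pi$, and the category equivalences bookkept coherently so as to produce a genuine isomorphism $\MM\cong\MM(V,G)$ rather than a weaker equivalence, where the uniqueness assertion of Proposition \ref{p5.12} is indispensable for matching the parameters on both sides.
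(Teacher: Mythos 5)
Your proposal is correct and follows essentially the same route as the paper, whose own proof is just the one-line remark that (1) is clear from the finite-dimensionality of $\mathscr{B}(V)$ and (2) is a direct consequence of Figure II together with Proposition \ref{p5.14}; you have simply made the traversal of both sides of Figure II explicit, citing the same ingredients (Propositions \ref{np1}, \ref{p4.5}, \ref{l3.8}, \ref{p5.12}, \ref{p5.14} and the centrality criterion of Subsection 5.3). Your added argument for the genuineness criterion via the nontriviality of the class of $\Phi_{a,b,d}$ in $H^{3}(\mathbbm{G},k^{\ast})$ is a reasonable supplement that the paper leaves implicit.
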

 \begin{proof}
 The statement (1) is clear since $\mathscr{B}(V)$ is finite-dimensional. The statement (2) is a direct consequence of Figure II and Proposition \ref{p5.14}.
 \end{proof}

\section{Presentation of $\MM(V,G)$}
The main aim of this section is to provide an explicit presentation of $\MM(V,G)$, or equivalently, of  $\mathscr{B}(V)^{J_{a,b,d}}$. In principle, the idea is simple: first we recall the classification results about rank $2$ Nichols algebras given by Heckenberger \cite{h,h2}, then we carry out the twisting process.

\subsection{Generalized Dynkin diagrams and full binary trees}
Let $E=\{e_i|1\leq i\leq d\}$ be a canonical basis of $\Z^d,$ and $\chi$ be a bicharacter of $\Z^d.$ The numbers $q_{ij}=\chi(e_i,e_j)$ are called the structure constants of $\chi$ with respect to $E.$
\begin{definition}
The generalized Dynkin diagram of the pair $(\chi, E)$ is a nondirected graph $\mathcal{D}_{\chi,E}$ with the following properties:
\begin{itemize}
\item[(1)]There is a bijective map $\phi$ from $I=\{ 1, 2, \dots, d \}$ to the set of vertices of $\mathcal{D}_{\chi,E}.$
\item[(2)]For all $1\leq i\leq d,$ the vertex $\phi(i)$ is labelled  by $q_{ii}.$
\item[(3)]For all $1\leq i,j\leq d,$ the number  $n_{ij}$ of edges between $\phi(i)$ and $\phi(j)$ is either $0$ or $1.$ If $i=j$ or $q_{ij}q_{ji}=1$ then $n_{ij}=0,$ otherwise $n_{ij}=1$ and the edge is labelled by $q_{ij}q_{ji}.$
\end{itemize}
\end{definition}
Obviously a generalized Dynkin diagram is completely determined by the matrix $(q_{ij})_{d \times d},$ which also arises naturally in the context of Yetter-Drinfeld modules and Nichols algebras. Recall that for a diagonal Yetter-Drinfeld module $V \in {_G^G \mathcal{YD}^\Phi},$ there exists a basis $\{X_i\}$ of $V$ such that
\begin{equation}\label{eq6.1}
\delta_L(X_i)=g_i \otimes X_i,\ \  g_{i}\triangleright X_j=q_{ij}X_j
\end{equation}
for some $g_i\in G$ and $q_{ij}\in k^{\ast}$. Hence we obtain a matrix $(q_{ij})$ and the corresponding generalized Dynkin diagram $\mathcal{D}_{(q_{ij})}.$ We also call $\mathcal{D}_{(q_{ij})}$ the generalized Dynkin diagram of $V$ or $\mathscr{B}(V).$

Suppose $V \in {_G^G \mathcal{YD}}$ and let $J$ be a 2-cochain on $G.$ Recall that $\mathscr{B}(V)^J$ is a Nichols algebra in $_G^G \mathcal{YD}^{\partial(J)}$ by Lemma \ref{tn}. It is natural to observe that

\begin{lemma}
 $\mathscr{B}(V)$ and $\mathscr{B}(V)^J$ share the same generalized Dynkin diagram.
\end{lemma}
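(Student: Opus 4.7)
The plan is to reduce the claim to a direct comparison of the structure matrices of $V$ and $V^J$ using Lemma \ref{tn}, and then verify that the twisting by the $2$-cochain $J$ does not affect the diagonal entries of the matrix nor the symmetric products $q_{ij}q_{ji}$, which are precisely the data encoding the vertex and edge labels of a generalized Dynkin diagram.

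First I would invoke Lemma \ref{tn} to identify $\mathscr{B}(V)^J$ with $\mathscr{B}(V^J)$ as Nichols algebras in ${_G^G\mathcal{YD}^{\partial(J)}}.$ Thus the generalized Dynkin diagram of $\mathscr{B}(V)^J$ equals that of $V^J,$ and it suffices to compare $V$ and $V^J$ as diagonal Yetter–Drinfeld modules. Choose the basis $\{X_i\}$ of $V$ as in \eqref{eq6.1}, with $\delta_L(X_i)=g_i\otimes X_i$ and $g_i\triangleright X_j=q_{ij}X_j.$ Since $V^J$ coincides with $V$ as a $G$-comodule, the elements $g_i$ attached to the basis vectors $X_i$ are unchanged; only the $G$-action is deformed via \eqref{ta}.

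Next I would compute the new structure constants. By \eqref{ta}, for $X_j\in {^{g_j}V},$
\[
g_i \triangleright^J X_j \;=\; \frac{J(g_i,g_j)}{J(g_j,g_i)}\, g_i\triangleright X_j \;=\; \frac{J(g_i,g_j)}{J(g_j,g_i)}\, q_{ij}\, X_j,
\]
so the twisted structure constants are $q^J_{ij} = \tfrac{J(g_i,g_j)}{J(g_j,g_i)}\,q_{ij}.$ Two easy observations finish the argument: (i) for the vertex labels, $q^J_{ii}=\tfrac{J(g_i,g_i)}{J(g_i,g_i)}\,q_{ii}=q_{ii};$ (ii) for the edge labels,
\[
q^J_{ij}\,q^J_{ji}\;=\;\frac{J(g_i,g_j)}{J(g_j,g_i)}\,q_{ij}\cdot \frac{J(g_j,g_i)}{J(g_i,g_j)}\,q_{ji}\;=\;q_{ij}q_{ji}.
\]
In particular $q^J_{ij}q^J_{ji}=1$ precisely when $q_{ij}q_{ji}=1,$ so the edge-connectivity pattern, edge labels and vertex labels of $\mathcal{D}_{(q^J_{ij})}$ match those of $\mathcal{D}_{(q_{ij})}.$ This gives the desired identification of generalized Dynkin diagrams.

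There is essentially no obstacle: the whole content of the lemma is the cancellation $\tfrac{J(g_i,g_j)}{J(g_j,g_i)}\cdot\tfrac{J(g_j,g_i)}{J(g_i,g_j)}=1$ that forces the invariants $q_{ii}$ and $q_{ij}q_{ji}$ to be untouched by the cocycle twist. The only point that requires a bit of care is to make sure one is comparing $\mathscr{B}(V)^J$ with $\mathscr{B}(V^J)$ in the correct category ${_G^G\mathcal{YD}^{\partial(J)}},$ which is exactly what Lemma \ref{tn} supplies.
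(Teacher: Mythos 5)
Your proposal is correct and is exactly the ``easy computation using \eqref{ta}'' that the paper's one-line proof alludes to: the twist multiplies $q_{ij}$ by $J(g_i,g_j)/J(g_j,g_i)$, which is trivial on the diagonal and cancels in the products $q_{ij}q_{ji}$, so the vertex and edge labels are unchanged. Your added care in routing the comparison through Lemma \ref{tn} and $V^J$ is a sensible way to make the implicit identification precise.
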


\begin{proof} By easy computation using \eqref{ta}. \end{proof}

To describe the structure of rank $2$ Nichols algebras of diagonal type, we need the notion of full binary trees (see \cite{h} and Appendix A).  A binary tree $T$ is a tree such that each node has at most two children. $T$ is called full if each node of $T$ has exactly zero or two children. Let $r(T)$ denote the root of the binary tree $T$. We use $N_0(T)$ and $N_2(T)$ to present the set of nodes of $T$ which have zero and two children respectively, and $N(T)=N_0(T)\cup N_2(T).$ Let $\{L,R\}$ be a set of two abstract elements and define $\overline{N}_2(T)=N_2(T)\cup\{L,R\},$ $\overline{N}(T)=N(T)\cup\{L,R\}.$ We use $a_L$ and $a_R$ to denote the left and right child of $a\in N_2(T).$ For $b\in N(T) \setminus \{r(T)\}$, let $b^{P}$ denote its parent.
\begin{definition}
Let $T$ be a binary tree and $a\in N(T).$ A node $b\in \overline{N}_2(T)$ is called the left godfather of $a,$ denoted by $b=a^L,$ if one of the following conditions holds:
\begin{itemize}
\item[(1)] $a=r(T)$ and $b=L;$
\item[(2)] $a$ is the right children of $b;$
\item[(3)] $a$ is the left child of $c$ and $b$ is the left godfather of $c.$
\end{itemize}
\end{definition}
In a similar manner one defines the right godfather $a^R$ of $a.$ Moreover, we can define a function $\mathfrak{l}:N(T)\to \N$ inductively by

$$\mathfrak{l}(a)=\left \{
\begin{array}{ll} \mathfrak{l}(a^R)+1 & \mathrm{if} \ \ {a^R}_L=a, \\
1 & \text{otherwise}.
\end{array}\right. $$

\subsection{Classification of rank $2$ Nichols algebras}
Let $V$ be a diagonal Yetter-Drinfeld module of dimension $2$ over an abelian group $G$. Let $\{X_1,X_2\}$ be a basis
of $V$ such that \eqref{eq6.1} holds for certain $g_i\in G$ and $q_{ij}\in k^{\ast}$ for  $1\leq i,j\leq 2.$ Choose a basis $E=\{e_1,e_2\}$ of $\Z^2$. Then there exist a unique group homomorphism $\varphi \colon \Z^2\to G$ and a unique bicharacter $\chi \colon \Z^2\times \Z^2 \to k^{\ast}$ satisfying
\begin{equation}
\varphi(e_i)=g_i,\quad \chi(e_i,e_j)=q_{ij}  \;\;(1\leq i,j\leq 2).
\end{equation}
Clearly, the generalized Dynkin diagram of $V$ is the same as $\mathcal{D}_{\chi, E}$.

Let $T(V)$ be the tensor algebra of $V$, then $T(V)$ admits a $\Z^{2}$-grading by extending
\begin{equation} \deg X_i=e_i, \;\;\;\;1\leq i\leq 2.
\end{equation}
For brevity, here and below we write $|X|=\deg(X)$ for the $\Z^{2}$-homogenous
$X \in T(V)$.

Define a map $\tau \colon \overline{N}(T)\to T(V)$ as follows:
\begin{itemize}
\item[(1)] $\tau(R)=X_1,\tau(L)=X_2;$
\item[(2)] if $a\in N(T),$ then $\tau(a)=\tau(a^R)\tau(a^L)-\chi(|\tau(a^R)|,|\tau(a^L)|)\tau(a^L)\tau(a^R).$
\end{itemize}

For $a\in \overline{N}(T),$ let $$P_a=\chi^{-1}(|\tau(a)|,|\tau(a)|).$$ From now on, for convenience we will write $\chi(a,b)$ instead of $\chi(|\tau(a)|,|\tau(a)|)$ for all $a,b\in \overline{N}(T).$ For $a\in \overline{N}(T),$ define
$$\lambda(a)=\left \{
\begin{array}{ll}0& \mathrm{if}\ \  a \notin N(T),\\
\chi^{-1}(L,R)-\chi(R,L)& \mathrm{if}\ \  a=r(T),\\
\chi^{-1}(a^L,a^R)-\chi(a^R,a^L)+\lambda(a^{P})& \text{otherwise}.
\end{array}\right. $$
Furthermore, for any $b\in N(T)$ with $b^L\in N(T),$ set
$$\mu(b)=\left \{
\begin{array}{ll} \lambda(b)& \mathrm{if} \ \ b={b^L}_R, \\
\lambda(b)\mu(b^R) & \text{otherwise}.
\end{array}\right. $$
In the following, by $R_n$ we denote the set of primitive $n$-th roots of unity and set $\mathscr{R}=\bigcup_{n\geq 2} R_{n}.$
With these notations, we can state Heckenberger's classification result as follows.

\begin{theorem} \emph{(\cite[Theorem 7.1]{h} and \cite[Theorem 7]{h2})}
The set of finite-dimensional rank $2$ Nichols algebras $\mathscr{B}(V)$ are in one-to-one correspondence with the set of pairs $(\mathcal{D},T)$ appeared in the same horizontal line of Table 6.1, where $\mathcal{D}$ is a generalized Dynkin diagram with all parameters
taken in $\mathscr{R}$  and $T$ is a full binary tree. Moreover, the Nichols algebra $\mathscr{B}(V)$ associated with $(\mathcal{D},T)$ is the quotient of the tensor algebra $T(V)$ by the ideal generated by the following set
\begin{equation} \label{6.3}
\begin{split}
&\{\tau(a) \mid a\in N_0(T)\}\cup \{\tau(a)^{\operatorname{ord}P_a} \mid a\in \overline{N}_2(T), 2\leq \operatorname{ord} P_a\leq \infty \}\cup \\
&\{\tau(b)\tau(c^L)-\chi(b,c^L)\tau(c^L)\tau(b)-\frac{\mu(b)}{(\mathfrak{l}(b)+1)!_{P_c}} \tau(c)^{\mathfrak{l}(b)+1} \mid b\in N_2(T),c=b^L\in N_2(T)\}.
\end{split}
\end{equation}

{\setlength{\unitlength}{1mm}
\begin{table}[t]\centering
\caption{Finite-dimensional Nichols algebras of rank $2$.}
\vspace{1mm}
\begin{tabular}{r|l|l|l}
\hline
& \text{Dynkin diagrams} & \text{Fixed parameters}&\text{Binary trees} \\
\hline
\hline
1 & \rule[-3\unitlength]{0pt}{8\unitlength}
\begin{picture}
(14,5)(0,3) \put(1,2){\circle{2}} \put(13,2){\circle{2}} \put(1,5){\makebox[0pt]{\scriptsize~$q$}}
\put(13,5){\makebox[0pt]{\scriptsize~$r$}}
\end{picture}
& $q,r\in \Bbbk^\ast $& $T_1$
\\
\hline
2 & \Dchaintwo{}{$q$}{$q^{-1}$}{$q$} & $q\in \Bbbk^\ast \setminus \{1\}$& $T_1$
\\
\hline
3 & \Dchaintwo{}{$q$}{$q^{-1}$}{$-1$}\  \Dchaintwo{}{$-1$}{$q$}{$-1$}& $q\in \Bbbk^\ast \setminus \{-1,1\}$& $T_2,T_2$
\\
\hline
4 & \Dchaintwo{}{$q$}{$q^{-2}$}{$q^2$} & $q\in \Bbbk^\ast \setminus \{-1,1\}$& $T_3$
\\
\hline
\rule[-3\unitlength]{0pt}{10\unitlength} 5 & \Dchaintwo{}{$q$}{$q^{-2}$}{$-1$}&
$q\in \Bbbk^\ast \setminus \{-1,1\}$, $q\notin R_4$& $T_3$
\\
\hline
\rule[-3\unitlength]{0pt}{10\unitlength} 6 &
\Dchaintwo{}{$\zeta $}{$q^{-1}$}{$q$} &
$\zeta \in R_3,$ $q\in \Bbbk^\ast \setminus \{1,\zeta,\zeta^2\}$
& $T_3$
\\
\hline
7 & \Dchaintwo{}{$\zeta $}{$-\zeta $}{$-1$} & $\zeta \in R_3$& $T_3$
\\
\hline
8 &\Dchaintwo{}{$-\zeta^{-2}$}{$-\zeta^3$}{$-\zeta^2$}\ \Dchaintwo{}{$-\zeta^{-2}$}{$\zeta^{-1}$}{$-1$}\
\Dchaintwo{}{$-\zeta^3$}{$\zeta$}{$-1$} & $\zeta \in R_{12}$& $T_4,T_5,T_7$
\\
\hline
9 &\Dchaintwo{}{$-\zeta^2$}{$\zeta$}{$-\zeta^2$}\ \Dchaintwo{}{$-\zeta^2$}{$\zeta^3$}{$-1$}\
\Dchaintwo{}{$-\zeta^{-1}$}{$-\zeta^3 $}{$-1$} & $\zeta \in R_{12}$& $T_4,T_5,T_7$
\\
\hline
10 &\Dchaintwo{}{$-\zeta$}{$\zeta^{-2}$}{$\zeta^3$}\ \Dchaintwo{}{$\zeta^3$}{$\zeta^{-1}$}{$-1$}\
 \Dchaintwo{}{$-\zeta^2$}{$\zeta $}{$-1$}& $\zeta \in R_9$& $T_6,T_9,T_{14}$
\\
\hline
\rule[-3\unitlength]{0pt}{10\unitlength} 11 & \Dchaintwo{}{$q$}{$q^{-3}$}{$q^3$} & $q\in \Bbbk^\ast \setminus \{-1,1\}$,
$q\notin R_3$& $T_8$
\\
\hline
12 &\Dchaintwo{}{$\zeta^2$}{$\zeta $}{$\zeta^{-1}$}\ \Dchaintwo{}{$\zeta^2$}{$-\zeta^{-1}$}{$-1$}\
\Dchaintwo{}{$\zeta $}{$-\zeta $}{$-1$}& $\zeta \in R_8$& $T_8,T_8,T_8$
\\
\hline
13 & \Dchaintwo{}{$\zeta^6 $}{$-\zeta^{-1}$}{$\zeta^{-4}$}\ \Dchaintwo{}{$\zeta^6$}{$\zeta$}{$\zeta^{-1}$}\
\Dchaintwo{}{$-\zeta^{-4}$}{$\zeta^5$}{$-1$}\ \Dchaintwo{}{$\zeta$}{$\zeta^{-5} $}{$-1$} & $\zeta \in R_{24}$& $T_{10},T_{13},T_{17},T_{21}$
\\
\hline
14 & \Dchaintwo{}{$\zeta $}{$\zeta^2$}{$-1$}\ \Dchaintwo{}{$-\zeta^{-2}$}{$\zeta^{-2}$}{$-1$} & $\zeta \in R_5$& $T_{11},T_{16}$
\\
\hline
15 & \Dchaintwo{}{$\zeta $}{$\zeta^{-3}$}{$-1$}\ \Dchaintwo{}{$-\zeta^{-2}$}{$\zeta^3$}{$-1$} & $\zeta \in R_{20}$& $T_{11},T_{16}$
\\
\hline
16 & \Dchaintwo{}{$-\zeta $}{$-\zeta^{-3}$}{$\zeta^5$}\ \Dchaintwo{}{$\zeta^3$}{$-\zeta^4$}{$-\zeta^{-4}$}\
\Dchaintwo{}{$\zeta^5$}{$-\zeta^{-2}$}{$-1$}\ \Dchaintwo{}{$\zeta^3$}{$-\zeta^2$}{$-1$} & $\zeta \in R_{15}$& $T_{12},T_{15},T_{18},T_{20}$
\\
\hline
17 & \Dchaintwo{}{$-\zeta- $}{$-\zeta^{-3}$}{$-1$}\ \Dchaintwo{}{$-\zeta^{-2}$}{$-\zeta^3$}{$-1$} &$\zeta\in R_7 $&$T_{19},T_{22}$
\\
\hline
\end{tabular}
\end{table}}
\end{theorem}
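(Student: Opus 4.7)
The theorem as stated is Heckenberger's classification, so any proof proposal here is necessarily a sketch of his strategy; I outline how I would attempt to reconstruct it. The plan is to reduce the classification of finite-dimensional rank 2 Nichols algebras of diagonal type to a finite combinatorial problem via a root system / Weyl groupoid formalism, and then to read off the defining relations from a PBW basis encoded by full binary trees.

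First I would construct a PBW basis for the free Nichols algebra $\mathscr{B}(V)$. For a diagonal $V$ with basis $\{X_1,X_2\}$, the tensor algebra $T(V)$ becomes $\Z^2$-graded and admits a Lyndon-word basis; iterated braided commutators of Lyndon words produce hyperletters whose products span $T(V)$. One then identifies the positive roots of $\mathscr{B}(V)$ with the $\Z^2$-degrees of nontrivial hyperletters that survive in the quotient. This is exactly what the bracketing procedure $\tau: \overline{N}(T) \to T(V)$ along a full binary tree $T$ records: each node of $T$ corresponds to a hyperletter $\tau(a)$ of degree $|\tau(a)|$, and the two leaves $L,R$ correspond to $X_2,X_1$.

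Next I would introduce Heckenberger's reflection operators $s_1,s_2$ on the set of braiding matrices $(q_{ij})$: if $\m_i := \min\{n\ge 1 \mid (n)_{q_{ii}}(1-q_{ii}^{n-1}q_{ij}q_{ji})=0\}$ is finite, reflection at the vertex $i$ produces a new bicharacter with updated structure constants. The central technical point is that these reflections preserve the Nichols algebra up to an isomorphism of $\Z$-graded objects, and that the orbit under the groupoid generated by $s_1,s_2$ gives all possible ``Cartan shapes'' at each vertex. Finiteness of $\mathscr{B}(V)$ forces each $\m_i$ to be finite at every matrix in the orbit, and closing up under the two reflections yields a \emph{finite arithmetic root system} in $\Z^2$. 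A case analysis over the admissible rank-2 finite root systems (of types $A_1\times A_1, A_2, B_2, G_2$ together with the genuinely exotic diagrams that appear only at special roots of unity) produces exactly the list in Table 6.1; the corresponding full binary tree encodes the chosen reduced expression expressing the longest element of the Weyl groupoid as a composition of reflections, and hence the order in which the $\tau(a)$ are generated.

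Finally I would derive the relations. For each node $a \in N_0(T)$ the associated hyperletter vanishes in $\mathscr{B}(V)$, giving $\tau(a)=0$. For each $a \in \overline{N}_2(T)$ with $\ord(P_a) < \infty$ one checks, using the quantum Serre-type identities and induction on length, that $\tau(a)^{\ord P_a}$ is primitive and thus must vanish by the universal property of $\mathscr{B}(V)$. The more subtle relations are the ``mixed'' ones indexed by pairs $(b, c=b^L)\in N_2(T)\times N_2(T)$; these come from commuting a hyperletter past a lower-level one and collecting the discrepancy using the quantum binomial identity, which is where the coefficient $\mu(b)/(\mathfrak{l}(b)+1)!_{P_c}$ arises. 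That these three families indeed generate the defining ideal, and not a proper sub-ideal, is verified by a dimension count: one computes the graded dimension of the quotient using the PBW theorem for Nichols algebras and matches it with the known Hilbert series $\prod_{\beta\in\Delta^+}\tfrac{1-t^{N_\beta|\beta|}}{1-t^{|\beta|}}$ over the positive roots.

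The main obstacle I expect is the passage from a finite root system to an actual finite-dimensional Nichols algebra, i.e.\ showing that the reflections really give isomorphisms of Nichols algebras rather than merely of underlying vector spaces; this is where Heckenberger's theory is genuinely non-trivial and where the subtle interplay between the bicharacter and the braided category structure matters. Everything else — the PBW construction, the extraction of relations along a tree, and the final dimension check — is more mechanical once the root-system picture is in place.
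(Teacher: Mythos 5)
This theorem is not proved in the paper at all: it is quoted verbatim from Heckenberger (\cite[Theorem 7.1]{h} and \cite[Theorem 7]{h2}), so there is no internal argument to measure your proposal against. Your outline --- PBW/hyperletter bases indexed by the bracketings $\tau(a)$ along a full binary tree, the Weyl groupoid of reflections acting on braiding matrices, reduction of finiteness to finite arithmetic root systems in $\Z^2$, and recovery of the relations (vanishing hyperletters, root-vector powers $\tau(a)^{\operatorname{ord}P_a}$, and the mixed commutator relations with the quantum-factorial coefficients) --- is a fair high-level summary of the strategy of those references, and it correctly identifies the genuinely hard step, namely that the reflections induce isomorphisms of Nichols algebras and not merely of graded vector spaces. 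The one place your sketch is thinnest is the completeness of the relation set \eqref{6.3}, which you dispatch with ``a dimension count''; note that this is precisely the point the present paper later leans on through Angiono's explicit set of defining polynomials $\mathbf{S}$ in \cite[Theorem 3.1]{Ang}, so if you were to expand the sketch into a proof, that is where the real work (beyond the case analysis of root systems) would concentrate.
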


\subsection{The presentation of $\mathscr{B}(V)^J$}
In this subsection, the notations of Section 5 are used freely. Especially, we take $$\mathbbm{G}=\Z_\mathbbm{m}\times \Z_\mathbbm{n}=\langle \mathbbm{g}_1\rangle\times \langle \mathbbm{g}_2\rangle, \quad G=\Z_{m}\times \Z_{n}=\langle g_1\rangle\times \langle g_2\rangle $$ with $m=\mathbbm{m}^2, n=\mathbbm{n}^2$. Moreover, let  $V=kX_1\oplus kX_2$ be the direct sum of $1$-dimensional Yetter-Drinfeld modules of $(kG, \pi^{\ast}(\Phi))$.  Therefore, there exist $ h_i\in G$ and $q_{ij}\in k^{\ast}$ such that
$$\delta_L(X_i)=h_i \otimes X_i,\ \  h_{i}\triangleright X_j=q_{ij}X_j$$
for all $1\leq i,j\leq 2$.

Consider the tensor algebra $T_{\pi^{\ast}(\Phi)}(V)$ in $_G^G \mathcal{YD}^{\pi^*(\Phi)}$. The $G$-comodule structure of $V$ induces a $G$-grading on $T_{\pi^{\ast}(\Phi)}(V)$. For any homogenous element $X \in T_{\pi^{\ast}(\Phi)}(V)$, let $\delta(X)$ denote its $G$-degree, that is $\delta_L(X) = \delta(X) \otimes X$. Note that $\delta(X)$ is also denoted simply by $x$ in Sections 4 and 5. We introduce this new notation mainly for the awkward situation when there is no sense of lowercase, for example, a long expression.

According to Figure II of Section 5, any finite-dimensional rank $2$ Nichols algebras in $_G^G \mathcal{YD}^{\pi^*(\Phi)}$ can be realized as the twist $\mathscr{B}(V)^{J}$ of a rank $2$ Nichols algebra $\mathscr{B}(V)$ in $_G^G \mathcal{YD}.$ Recall that, the product of $\mathscr{B}(V)^{J}$ is denoted by $\circ$ and by definition
\begin{equation*}
X\circ Y =J(x,y)XY
\end{equation*} for all homogeneous elements $X,Y\in \mathscr{B}(V).$

Define a map $\tau^*:\overline{N}(T)\to T_{\pi^{\ast}(\Phi)}(V)$, a twisted version of $\tau$, as follows:
\begin{itemize}
\item[(1)]$\tau^*(R)=X_1, \tau^*(L)=X_2;$
\item[(2)]for $a\in N(T),$ define $$\tau^*(a)=\frac{\tau^*(a^R)\circ\tau^*(a^L)}{J_{a,b,d}(a^R,a^L)}
    -\chi(a^R,a^L)\frac{\tau^*(a^L) \circ \tau^*(a^R)}{J_{a,b,d}(a^L,a^R)},$$
\end{itemize}
where for conciseness  $J_{a,b,d}(a^R,a^L)$ stands for $J_{a,b,d}(\delta(\tau^{\ast}(a^R)),\delta(\tau^{\ast}(a^L)))$.

Now the structure of $\mathscr{B}(V)^{J}$ can be described explicitly in the following proposition.

\begin{proposition}\label{np7.5}
Suppose that $\mathscr{B}(V)$ is a finite-dimensional rank $2$ Nichols algebra in $_G^G \mathcal{YD}$ associated with the pair $(\mathcal{D}, T).$ Then $\mathscr{B}(V)^{J}\in\ _G^G \mathcal{YD}^{\Phi}$ with $\Phi=\partial(J)$ is isomorphic to the algebra $T_{\Phi}(V)/I \in\ _G^G \mathcal{YD}^{\Phi}$ where the ideal $I$ is generated by
\begin{equation}\label{6.4}
\begin{split}
& \{\tau^*(a) \mid a\in N_0(T)\} \cup \{\tau^*(a)^{\overrightarrow{\emph{ord}\ P_a}} \mid a\in \overline{N}_2(T), 2\leq \emph{ord}\ P_a\leq \infty\}\cup \\
&\{\frac{\tau^*(b)\tau^*(c^L)}{J_{a,b,d}(b,c^L)}-\chi(b,c^L)\frac{\tau^*(c^L)\tau^*(b)}{J_{a,b,d}(c^L ,b)}-\frac{\mu(b)}{(\mathfrak{l}(b)+1)!_{P_c}\prod_{i=1}^{\mathfrak{l}(b)}J_{a,b,d}(c^i,c)} \tau^*(c)^{\overrightarrow{\mathfrak{l}(b)+1}}\\
&\mid b\in N_2(T),c=b^L\in N_2(T)\}.
\end{split}
\end{equation}
\end{proposition}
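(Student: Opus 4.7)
The central tool is Lemma~\ref{nl5}, which says that if $\mathbf{S}$ is a minimal defining set of relations of the untwisted Nichols algebra $\mathscr{B}(V) \in {_G^G\mathcal{YD}}$, then $\Psi^{-1}(\mathbf{S})$ is a minimal defining set of relations of $\mathscr{B}(V)^{J}\in {_G^G\mathcal{YD}^\Phi}$. By Heckenberger's classification recalled above, $\mathbf{S}$ is precisely the set \eqref{6.3}. The proof therefore reduces to computing $\Psi^{-1}$ on each of the three types of elements in \eqref{6.3} and verifying that the outcome agrees, up to nonzero scalars (which do not affect the ideal they generate), with the set \eqref{6.4}.

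The first step I would take is to establish $\Psi(\tau^{*}(a)) = \tau(a)$ for every $a \in \overline{N}(T)$, by induction on the depth of $a$ in $T$. The base cases $a \in \{L, R\}$ are immediate from the definitions. For the inductive step on $a \in N_2(T)$, I would invoke the multiplicativity identity $\Psi(E \circ F) = J_{a,b,d}(\delta(E), \delta(F))\Psi(E)\Psi(F)$ established in \eqref{ne2}: the denominators $J_{a,b,d}(a^{R}, a^{L})$ and $J_{a,b,d}(a^{L}, a^{R})$ built into the recursive definition of $\tau^{*}(a)$ cancel precisely against the factors produced by $\Psi$, giving
\[ \Psi(\tau^{*}(a)) = \tau(a^{R})\tau(a^{L}) - \chi(a^{R}, a^{L})\tau(a^{L})\tau(a^{R}) = \tau(a). \]

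Next I would evaluate $\Psi$ on iterated $\circ$-powers. Writing $t = \tau^{*}(a)$ and $g = \delta(t)$ for its $G$-degree, a straightforward induction on $n$ via \eqref{ne2} yields
\[ \Psi\bigl(t^{\overrightarrow{n}}\bigr) = \Bigl(\prod_{i=1}^{n-1} J_{a,b,d}(g^{i}, g)\Bigr)\, \tau(a)^{n}. \]
Inverting this simultaneously explains the $\tau^{*}(a)^{\overrightarrow{\mathrm{ord}\,P_{a}}}$ entries of \eqref{6.4} and accounts for the denominator $\prod_{i=1}^{\mathfrak{l}(b)} J_{a,b,d}(c^{i}, c)$ attached to $\tau^{*}(c)^{\overrightarrow{\mathfrak{l}(b)+1}}$, where $c^{i}$ is interpreted as the $i$-th power of the $G$-degree of $\tau^{*}(c)$.

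The final step assembles these two computations: applying $\Psi^{-1}$ termwise to the mixed-commutator element of \eqref{6.3} and substituting the outputs of the first two steps produces precisely the third family of elements in \eqref{6.4}, after which Lemma~\ref{nl5} closes the argument. The principal obstacle is not conceptual but combinatorial: one must carefully track how $\Psi$ distributes the $J_{a,b,d}$-factors across the doubly recursive definition of $\tau^{*}$ and through its $\overrightarrow{n}$-th $\circ$-powers, and confirm that all such factors accumulate into exactly the coefficients stated in \eqref{6.4}.
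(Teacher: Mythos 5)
Your proposal is correct and follows essentially the same route as the paper: both arguments rest on the linear isomorphism $\Psi$ of \eqref{e6.6}, the multiplicativity identity $\Psi(E\circ F)=J(e,f)\Psi(E)\Psi(F)$ from the proof of Lemma \ref{nl5}, and the key fact $\Psi(\tau^*(a))=\tau(a)$, from which one concludes that $\Psi$ carries the relations \eqref{6.4} onto the relations \eqref{6.3} and hence induces the desired isomorphism. The paper leaves the verification of $\Psi(\tau^*(a))=\tau(a)$ and the bookkeeping of the $J_{a,b,d}$-factors on iterated powers as ``easily shown,'' whereas you spell them out by induction; this is only a difference in level of detail, not of method.
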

\begin{proof}
Recall that $\mathscr{B}(V)^J$ is identical to $\mathscr{B}(V)$ as a linear space and its multiplication is obtained by a twisting from that of the latter, i.e., $E\circ F=J(e,f)EF$ for any $E,F\in \mathscr{B}(V).$

Define a map $\Psi: T_{\Phi}(V)\to T(V)$ by
\begin{equation} \label{e6.6}
\Psi((\cdots ((Y_1\circ Y_2)\circ Y_3)\cdots Y_n))=\prod_{i=1}^{n-1}J(Y_1\cdots Y_i,Y_{i+1})Y_1Y_2\cdots Y_n
\end{equation} for all $Y_i\in \{X_1,X_2\}.$ Evidently $\Psi$ is an isomorphism of linear spaces.
By the proof of Lemma \ref{nl5}, we have
\begin{equation} \label{e6.7}
\Psi(E\circ F)=J(e,f)\Psi(E)\Psi(F)
\end{equation} for any homogeneous $E,F\in T_{\Phi}(V).$

 By the definition of $\tau^*,$ one can easily show that $\Psi(\tau^*(a))=\tau(a)$ for any $a\in N(T).$ With this fact and \eqref{e6.7}, it is immediately that $\Psi$ maps relations \eqref{6.4} to relations \eqref{6.3}, hence  $\Psi$ induces an isomorphism $\overline{\Psi}:T_{\Phi}(V)/I\to \mathscr{B}(V)$ such that
\begin{equation}
\overline{\Psi}(E\circ F)=J(e,f)\overline{\Psi}(E)\overline{\Psi}(F)
\end{equation} for any $E,F\in T_{\Phi}(V)/I.$
This completes the proof of the proposition.
\end{proof}

\begin{remark} It is clear that to give the structure of $\mathscr{B}(V)^{J}$ we need to know the group $G$ and the bicharacter
$$\chi:\;G\times G\to k^{\ast},\;\;(h_{i},h_{j})\mapsto q_{ij}$$
at first. Therefore it is more convenient for us to translate Table 6.1 to the following Table 6.2.
\end{remark}

{\setlength{\unitlength}{1mm}
\begin{table}[htd]\centering
\caption{Finite-dimensional rank $2$ Nichols algebras over $\Z_{m}\times \Z_{n}$.} \label{tab.5}
\vspace{1mm}
\begin{tabular}{r|l|c}
\hline
& \text{Structure constants of Dynkin diagrams} & \text{Binary tree} \\
\hline
\hline
1. & $q_{12}q_{21}=1$ & $ T_1 $ \\
\hline
2. & $q_{12}q_{21}=q_{11}^{-1},\ q_{11}=q_{22}$ & $T_1$\\
\hline
\multirow{2}{*}{3.} & $q_{12}q_{21}=q_{11}^{-1},\ q_{11}\neq -1,\ q_{22}=-1$& $T_2$\\
\cline{2-3}
                    & $q_{12}q_{21}\neq -1,\ q_{11}=q_{22}=-1$& $T_2$\\
\hline
4. & $q_{12}q_{21}=q_{11}^{-2},\ q_{22}=q_{11}^2,\ q_{11}\neq -1$& $T_3$\\
\hline
5. & $q_{12}q_{21}=q_{11}^{-2},\ q_{11}\notin R_4,\ q_{22}=-1$& $T_3$\\
\hline
6. & $q_{12}q_{21}=q_{22}^{-1},\ q_{11}\in R_3,\ q_{22}\notin R_3$& $T_3$\\
\hline
7. & $q_{12}q_{21}=-q_{11},\ q_{11}\in R_3,\ q_{22}=-1$& $T_3$\\
\hline
\multirow{3}{*}{8.} & $q_{12}q_{21}=-\zeta^3,\ q_{11}=-\zeta^{-2},\ q_{22}=-\zeta^2,\ \zeta\in R_{12}$&$T_4$\\
\cline{2-3}
                    & $q_{12}q_{21}=\zeta^{-1},\ q_{11}=-\zeta^{-2},\ q_{22}=-1,\ \zeta\in R_{12}$&$T_5$\\
\cline{2-3}
                    & $q_{12}q_{21}=\zeta,\ q_{11}=-\zeta^{3},\ q_{22}=-1,\ \zeta\in R_{12}$& $T_7$\\
\hline
\multirow{3}{*}{9.} & $q_{12}q_{21}=\zeta,\ q_{11}=-\zeta^{2},\ q_{22}=-\zeta^{2},\ \zeta\in R_{12}$&$T_4$\\
\cline{2-3}
                    & $q_{12}q_{21}=\zeta^3,\ q_{11}=-\zeta^{2},\ q_{22}=-1,\ \zeta\in R_{12}$&$T_5$\\
\cline{2-3}
                    & $q_{12}q_{21}=-\zeta^{3}, \ q_{11}=-\zeta^{-1},\ q_{22}=-1,\ \zeta\in R_{12}$&$T_7$\\
\hline
\multirow{3}{*}{10.} & $q_{12}q_{21}=\zeta^{-2},\ q_{11}=-\zeta,\ q_{22}=\zeta^3,\ \zeta\in R_9$&$T_6$\\
\cline{2-3}
                     & $q_{12}q_{21}=\zeta^{-1},\ q_{11}=\zeta^3,\ q_{22}=-1,\ \zeta\in R_9$&$T_9$\\
\cline{2-3}
                     & $q_{12}q_{21}=\zeta,\ q_{11}=-\zeta^2,\ q_{22}=-1,\ \zeta\in R_9$&$T_{14}$\\
\hline
11.& $q_{12}q_{21}=q_{11}^{-3},\ q_{22}=q_{11}^3,\ q_{11}\neq -1,\ q_{11}\notin R_3$ & $T_8$\\
\hline
\multirow{3}{*}{12.} & $q_{12}q_{21}=\zeta,\ q_{11}=\zeta^2,\ q_{22}=\zeta^{-1},\ \zeta\in R_8$&$T_8$\\
\cline{2-3}
                     & $q_{12}q_{21}=-\zeta^{-1},\ q_{11}=\zeta^2,\ q_{22}=-1,\ \zeta\in R_8$&$T_8$\\
\cline{2-3}
                     & $q_{12}q_{21}=\zeta^{-1},\ q_{11}=\zeta,\ q_{22}=-1,\ \zeta\in R_8$&$T_8$\\
\hline
\multirow{4}{*}{13.} & $q_{12}q_{21}=-\zeta^{-1},\ q_{11}=\zeta^6,\ q_{22}=\zeta^{-4},\ \zeta\in R_{24}$&$T_{10}$\\
\cline{2-3}
                     &  $q_{12}q_{21}=\zeta,\ q_{11}=\zeta^6,\ q_{22}=\zeta^{-1},\ \zeta\in R_{24}$&$T_{13}$\\
\cline{2-3}
                     &  $q_{12}q_{21}=\zeta^{5},\ q_{11}=-\zeta^{-4},\ q_{22}=-1,\ \zeta\in R_{24}$&$T_{17}$\\
\cline{2-3}
                     & $q_{12}q_{21}=\zeta^{-5},\ q_{11}=\zeta,\ q_{22}=-1,\ \zeta\in R_{24}$&$T_{21}$\\
\hline
\multirow{2}{*}{14.} & $q_{12}q_{21}=\zeta^{2},\ q_{11}=\zeta,\ q_{22}=-1,\ \zeta\in R_5$&$T_{11}$\\
\cline{2-3}
                     & $q_{12}q_{21}=\zeta^{-2},\ q_{11}=-\zeta^{-2},\ q_{22}=-1,\ \zeta\in R_{5}$&$T_{16}$\\
\hline
\multirow{2}{*}{15.} & $q_{12}q_{21}=\zeta^{-3},\ q_{11}=\zeta,\ q_{22}=-1,\ \zeta\in R_{20}$&$T_{11}$\\
\cline{2-3}
                     & $q_{12}q_{21}=\zeta^{3},\ q_{11}=-\zeta^{-2},\ q_{22}=-1,\ \zeta\in R_{20}$&$T_{16}$\\
\hline
\multirow{4}{*}{16.} & $q_{12}q_{21}=-\zeta^{-3},\ q_{11}=-\zeta,\ q_{22}=\zeta^{5},\ \zeta\in R_{15}$&$T_{12}$\\
\cline{2-3}
                     & $q_{12}q_{21}=-\zeta^{4},\ q_{11}=\zeta^3,\ q_{22}=-\zeta^{-4},\ \zeta\in R_{15}$&$T_{15}$\\
\cline{2-3}
                     & $q_{12}q_{21}=-\zeta^{-2},\ q_{11}=\zeta^{5},\ q_{22}=-1,\ \zeta\in R_{15}$&$T_{18}$\\
\cline{2-3}
                     & $q_{12}q_{21}=-\zeta^{2},\ q_{11}=\zeta^{3},\ q_{22}=-1,\ \zeta\in R_{15}$&$T_{20}$\\
\hline
\multirow{2}{*}{17.} & $q_{12}q_{21}=-\zeta^{-3},\ q_{11}=-\zeta,\ q_{22}=-1,\ \zeta\in R_{7}$&$T_{19}$\\
\cline{2-3}
                     & $q_{12}q_{21}=-\zeta^{3},\ q_{11}=-\zeta^{-2},\ q_{22}=-1,\ \zeta\in R_{7}$&$T_{22}$\\
\hline
\end{tabular}
\end{table}}

\section{Examples: the standard case}
Keep the notations of Sections 5 and 6.
We call a rank $2$ graded pointed Majid algebra $\MM=\MM(V,G)$ over the group $\mathbb{G}=\Z_{\m}\times \Z_{\n}=\langle \g_1\rangle \times
\langle \g_2\rangle$ \emph{standard} if
\begin{equation} \delta_{L}(X_1)=\g_1\otimes X_1,\;\;\;\;\delta_{L}(X_2)=\g_2\otimes X_2.
\end{equation}
By Figure I, we have the corresponding Nichols algebra $\mathscr{B}(V)\in {^{G}_{G}\mathcal{YD}}$ in which $G=\Z_m\times \Z_n$ and
\begin{equation}
\sigma_{L}(X_1)=g_1\otimes X_1,\;\;\;\;\sigma_{L}(X_2)=g_2\otimes X_2.
\end{equation}
In this case, we also call the Nichols algebra $\mathscr{B}(V)\in {^{G}_{G}\mathcal{YD}}$ standard. As an explicit example to explain our theoretical results obtained so far, the aim of the present section is to classify standard rank $2$ graded pointed Majid algebras. Moreover, we will show that if $\m=\n$ then $\MM$ is always standard.

\subsection{General case} A nice property of standard Nichols algebras $\mathscr{B}(V)\in {^{G}_{G}\mathcal{YD}}$ is that
\begin{equation}\label{eq7.3}  \left(\begin{array}{cc}\alpha_1 & \beta_1 \\ \alpha_2 & \beta_2  \end{array}\right) =
\left(\begin{array}{cc}1 & 0 \\ 0 & 1  \end{array}\right)\;\; \text{and}\;\;\left(\begin{array}{cc}s_1 & t_1 \\ s_2 & t_2  \end{array}\right)= \left(\begin{array}{cc}1& 0 \\ 0 & 1 \end{array}\right).
\end{equation}

\begin{lemma} \label{l7.1} A standard Nichols algebra $\mathscr{B}(V)\in {^{G}_{G}\mathcal{YD}}$ is of Majid type if and only if
$$q_{21}^{\n}=1.$$
\end{lemma}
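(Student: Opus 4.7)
The plan is to apply Proposition \ref{p5.14} directly, after specializing all the relevant parameters to the standard situation. By Proposition \ref{p5.14}, being of Majid type is equivalent to the congruence
\[ x_{22}\alpha_{2} - x_{21}\beta_{2} \equiv 0 \pmod{\mathbbm{n}}. \]
So the first step is to substitute the standard values recorded in \eqref{eq7.3}, namely $\alpha_1=\beta_2=1$ and $\alpha_2=\beta_1=0$. The congruence then collapses to the single condition $x_{21}\equiv 0 \pmod{\mathbbm{n}}$, and the two other unknowns $x_{11},x_{12},x_{22}$ play no role in testing the Majid-type property in this case.

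Next I would translate this condition back into a statement about the bicharacter. Using \eqref{eq6.2} with $\alpha_2=\beta_1=0$ and $\alpha_1=\beta_2=1$, the four structure constants reduce to
\[ q_{11}=\zeta_m^{x_{11}},\quad q_{12}=\zeta_m^{x_{12}},\quad q_{21}=\zeta_n^{x_{21}},\quad q_{22}=\zeta_n^{x_{22}}, \]
so in particular $q_{21}^{\mathbbm{n}}=\zeta_n^{x_{21}\mathbbm{n}}$. Since $n=\mathbbm{n}^2$, the element $\zeta_n^{\mathbbm{n}}$ is a primitive $\mathbbm{n}$-th root of unity, and therefore $q_{21}^{\mathbbm{n}}=1$ if and only if $\mathbbm{n}\mid x_{21}$, i.e., $x_{21}\equiv 0 \pmod{\mathbbm{n}}$. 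Combining with the previous paragraph yields the claimed equivalence.

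There is no real obstacle here; the lemma is a direct computational specialization of Proposition \ref{p5.14}. The only point meriting care is keeping track of the normalization $\zeta_n^{\mathbbm{n}}=\zeta_{\mathbbm{n}}$ fixed in Section 3, which is what guarantees the clean passage from the congruence $x_{21}\equiv 0\pmod{\mathbbm{n}}$ to the multiplicative identity $q_{21}^{\mathbbm{n}}=1$. I would write the argument as a two-line chain of equivalences without introducing extra notation, so that the symmetry between the ``$\Rightarrow$'' and ``$\Leftarrow$'' directions is transparent.
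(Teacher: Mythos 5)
Your proposal is correct and is exactly the paper's argument: the paper's proof of Lemma \ref{l7.1} simply states that it is a direct consequence of Proposition \ref{p5.14} and \eqref{eq7.3}, and your computation (specializing the criterion to $x_{21}\equiv 0 \pmod{\mathbbm{n}}$ and identifying this with $q_{21}^{\mathbbm{n}}=1$ via $q_{21}=\zeta_n^{x_{21}}$ and $n=\mathbbm{n}^2$) is precisely the verification the authors leave implicit.
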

\begin{proof} It is a direct consequence of Proposition \ref{p5.14} and \eqref{eq7.3}.
\end{proof}
Therefore, Table \ref{tab7.1} gives all the possible standard rank $2$ Nichols algebras of Majid type in ${^{G}_{G}\mathcal{YD}}$.

{\setlength{\unitlength}{1mm}
\begin{table}[htd]\centering
\caption{Standard f.d. rank $2$ Nichols algebras of Majid type over $\Z_{m}\times \Z_{n}$.} \label{tab7.1}
\vspace{1mm}
\renewcommand{\arraystretch}{1.15}
\begin{tabular}{r|l|c}
\hline
& \text{Structure constants of Dynkin diagrams} & \text{Binary tree}
\\
\hline
\hline
1. & $q_{12}q_{21}=1,\ q_{21}^{\n}=1$ & $ T_1 $ \\
\hline
2. & $q_{12}q_{21}=q_{11}^{-1},\ q_{11}=q_{22},\ q_{21}^{\n}=1$ & $T_1$\\
\hline
\multirow{2}{*}{3.} & $q_{12}q_{21}=q_{11}^{-1},\ q_{11}\neq -1,\ q_{22}=-1,\ q_{21}^{\n}=1$& $T_2$\\
\cline{2-3}
                    & $q_{12}q_{21}\neq -1,\ q_{11}=q_{22}=-1,\ q_{21}^{\n}=1$& $T_2$\\
\hline
4. & $q_{12}q_{21}=q_{11}^{-2},\ q_{22}=q_{11}^2,\ q_{11}\neq -1,\ q_{21}^{\n}=1$& $T_3$\\
\hline
5. & $q_{12}q_{21}=q_{11}^{-2},\ q_{11}\notin R_4,\ q_{11}\neq -1, \ q_{22}=-1,\ q_{21}^{\n}=1$& $T_3$\\
\hline
6. & $q_{12}q_{21}=q_{22}^{-1},\ q_{11}\in R_3,\ q_{22}\notin R_3,\ q_{21}^{\n}=1$& $T_3$\\
\hline
7. & $q_{12}q_{21}=-q_{11},\ q_{11}\in R_3,\ q_{22}=-1,\ q_{21}^{\n}=1$& $T_3$\\
\hline
\multirow{3}{*}{8.} & $q_{12}q_{21}=-\zeta^3,\ q_{11}=-\zeta^{-2},\ q_{22}=-\zeta^2,\ \zeta\in R_{12},\ q_{21}^{\n}=1$&$T_4$\\
\cline{2-3}
                    & $q_{12}q_{21}=\zeta^{-1},\ q_{11}=-\zeta^{-2},\ q_{22}=-1,\ \zeta\in R_{12},\ q_{21}^{\n}=1$&$T_5$\\
\cline{2-3}
                    & $q_{12}q_{21}=\zeta,\ q_{11}=-\zeta^{3},\ q_{22}=-1,\ \zeta\in R_{12},\ q_{21}^{\n}=1$& $T_7$\\
\hline
\multirow{3}{*}{9.} & $q_{12}q_{21}=\zeta,\ q_{11}=-\zeta^{2},\ q_{22}=-\zeta^{2},\ \zeta\in R_{12},\ q_{21}^{\n}=1$&$T_4$\\
\cline{2-3}
                    & $q_{12}q_{21}=\zeta^3,\ q_{11}=-\zeta^{2},\ q_{22}=-1,\ \zeta\in R_{12},\ q_{21}^{\n}=1$&$T_5$\\
\cline{2-3}
                    & $q_{12}q_{21}=-\zeta^{3}, \ q_{11}=-\zeta^{-1},\ q_{22}=-1,\ \zeta\in R_{12},\ q_{21}^{\n}=1$&$T_7$\\
\hline
\multirow{3}{*}{10.} & $q_{12}q_{21}=\zeta^{-2},\ q_{11}=-\zeta,\ q_{22}=\zeta^3,\ \zeta\in R_9,\ q_{21}^{\n}=1$&$T_6$\\
\cline{2-3}
                     & $q_{12}q_{21}=\zeta^{-1},\ q_{11}=\zeta^3,\ q_{22}=-1,\ \zeta\in R_9,\ q_{21}^{\n}=1$&$T_9$\\
\cline{2-3}
                     & $q_{12}q_{21}=\zeta,\ q_{11}=-\zeta^2,\ q_{22}=-1,\ \zeta\in R_9,\ q_{21}^{\n}=1$&$T_{14}$\\
\hline
11.& $q_{12}q_{21}=q_{11}^{-3},\ q_{22}=q_{11}^3,\ q_{11}\neq -1,\ q_{11}\notin R_3,\ q_{21}^{\n}=1$ & $T_8$\\
\hline
\multirow{3}{*}{12.} & $q_{12}q_{21}=\zeta,\ q_{11}=\zeta^2,\ q_{22}=\zeta^{-1},\ \zeta\in R_8,\ q_{21}^{\n}=1$&$T_8$\\
\cline{2-3}
                     & $q_{12}q_{21}=-\zeta^{-1},\ q_{11}=\zeta^2,\ q_{22}=-1,\ \zeta\in R_8,\ q_{21}^{\n}=1$&$T_8$\\
\cline{2-3}
                     & $q_{12}q_{21}=\zeta^{-1},\ q_{11}=\zeta,\ q_{22}=-1,\ \zeta\in R_8,\ q_{21}^{\n}=1$&$T_8$\\
\hline
\multirow{4}{*}{13.} & $q_{12}q_{21}=-\zeta^{-1},\ q_{11}=\zeta^6,\ q_{22}=\zeta^{-4},\ \zeta\in R_{24},\ q_{21}^{\n}=1$&$T_{10}$\\
\cline{2-3}
                     & $q_{12}q_{21}=\zeta,\ q_{11}=\zeta^6,\ q_{22}=\zeta^{-1},\ \zeta\in R_{24},\ q_{21}^{\n}=1$&$T_{13}$\\
\cline{2-3}
                     & $q_{12}q_{21}=\zeta^{5},\ q_{11}=-\zeta^{-4},\ q_{22}=-1,\ \zeta\in R_{24},\ q_{21}^{\n}=1$&$T_{17}$\\
\cline{2-3}
                     & $q_{12}q_{21}=\zeta^{-5},\ q_{11}=\zeta,\ q_{22}=-1,\ \zeta\in R_{24},\ q_{21}^{\n}=1$&$T_{21}$\\
\hline
\multirow{2}{*}{14.} & $q_{12}q_{21}=\zeta^{2},\ q_{11}=\zeta,\ q_{22}=-1,\ \zeta\in R_5,\ q_{21}^{\n}=1$&$T_{11}$\\
\cline{2-3}
                     & $q_{12}q_{21}=\zeta^{-2},\ q_{11}=-\zeta^{-2},\ q_{22}=-1,\ \zeta\in R_{5},\ q_{21}^{\n}=1$&$T_{16}$\\
\hline
\multirow{2}{*}{15.} & $q_{12}q_{21}=\zeta^{-3},\ q_{11}=\zeta,\ q_{22}=-1,\ \zeta\in R_{20},\ q_{21}^{\n}=1$&$T_{11}$\\
\cline{2-3}
                     & $q_{12}q_{21}=\zeta^{3},\ q_{11}=-\zeta^{-2},\ q_{22}=-1,\ \zeta\in R_{20},\ q_{21}^{\n}=1$&$T_{16}$\\
\hline
\multirow{4}{*}{16.} & $q_{12}q_{21}=-\zeta^{-3},\ q_{11}=-\zeta,\ q_{22}=\zeta^{5},\ \zeta\in R_{15},\ q_{21}^{\n}=1$&$T_{12}$\\
\cline{2-3}
                     & $q_{12}q_{21}=-\zeta^{4},\ q_{11}=\zeta^3,\ q_{22}=-\zeta^{-4},\ \zeta\in R_{15},\ q_{21}^{\n}=1$&$T_{15}$\\
\cline{2-3}
                     & $q_{12}q_{21}=-\zeta^{-2},\ q_{11}=\zeta^{5},\ q_{22}=-1,\ \zeta\in R_{15},\ q_{21}^{\n}=1$&$T_{18}$\\
\cline{2-3}
                     & $q_{12}q_{21}=-\zeta^{2},\ q_{11}=\zeta^{3},\ q_{22}=-1,\ \zeta\in R_{15},\ q_{21}^{\n}=1$&$T_{20}$\\
\hline
\multirow{2}{*}{17.} & $q_{12}q_{21}=-\zeta^{-3},\ q_{11}=-\zeta,\ q_{22}=-1,\ \zeta\in R_{7},\ q_{21}^{\n}=1$&$T_{19}$\\
\cline{2-3}
                     & $q_{12}q_{21}=-\zeta^{3},\ q_{11}=-\zeta^{-2},\ q_{22}=-1,\ \zeta\in R_{7},\ q_{21}^{\n}=1$&$T_{22}$\\
\hline
\end{tabular}
\end{table}}

On the other hand, take a Nichols algebra $\mathscr{B}(V)$ in Table \ref{tab7.1} and from which we get $a,b,d$ uniquely due to Proposition \ref{p5.12}. As a matter of fact, in this case we have
\begin{equation} a=x_{11}',\;\;b=(\frac{\n}{\m}x_{12})',\;\;d=x_{22}''
\end{equation} for $q_{11}=\zeta_{m}^{x_{11}},\ q_{12}=\zeta_{m}^{x_{12}},\ q_{22}=\zeta_{n}^{x_{22}}$.
Then we get a Majid algebra
$$\MM(V,G):=\mathscr{B}(V)^{J_{a,b,d}}\# kG/(g^{\m}_1-1,g^{\n}_2-1).$$ So, our result in this section can be stated as follows.

\begin{theorem} \label{t7.2} Any connected graded rank $2$ pointed Majid algebra of standard type is isomorphic to $$\mathscr{B}(V)^{J_{a,b,d}}\# kG/(g^{\m}_1-1,g^{\n}_2-1)$$ for some $\mathscr{B}(V)$ given in Table \ref{tab7.1}.
\end{theorem}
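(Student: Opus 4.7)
The plan is to combine the general classification Theorem \ref{t5.17} with the standardness hypothesis and then read off the parameters via Proposition \ref{p5.12} in its simplified form.

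First, given a connected graded rank $2$ pointed Majid algebra $\MM$ of standard type, Theorem \ref{t5.17}(2) realizes $\MM$ as $\MM(V,G)$ for some Nichols algebra $\mathscr{B}(V) \in {}^{G}_{G}\mathcal{YD}$ of Majid type. I would first verify that standardness of $\MM$ forces standardness of the associated Nichols algebra in the sense of Subsection 5.3. Indeed, by Subsection 5.2 the $G$-comodule structure on $V \subseteq \R_1$ satisfies $\rho_L = (\iota \otimes \id)\delta_L$, so the hypothesis $\delta_L(X_i) = \g_i \otimes X_i$ translates directly into $\sigma_L(X_i) = g_i \otimes X_i$ for $i=1,2$. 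Consequently $\alpha_1=\beta_2=1$, $\alpha_2=\beta_1=0$, $s_1=t_2=1$, $s_2=t_1=0$; that is, the matrices in \eqref{eq7.3} are identity matrices.

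Second, by Lemma \ref{l7.1}, the Majid-type condition on a standard Nichols algebra is equivalent to the single constraint $q_{21}^{\n}=1$. Augmenting each row of Heckenberger's list (Table 6.2) with this constraint produces Table \ref{tab7.1} line by line, so $\mathscr{B}(V)$ must appear in Table \ref{tab7.1}.

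Third, I would read off $a, b, d$ via Proposition \ref{p5.12}. Writing $q_{11}=\zeta_m^{x_{11}}$, $q_{12}=\zeta_m^{x_{12}}$, $q_{22}=\zeta_n^{x_{22}}$, the general formulas \eqref{eq5.14} collapse under the identity matrices above to $a=x_{11}'$, $b=(\tfrac{\n}{\m}x_{12})'$, $d=x_{22}''$. Hence $\MM \cong \mathscr{B}(V)^{J_{a,b,d}} \# kG/(g_1^{\m}-1, g_2^{\n}-1)$ as claimed. The argument is essentially bookkeeping, since the substantive work is already encapsulated in Theorem \ref{t5.17} and Lemma \ref{l7.1}; the only point that genuinely needs care is confirming the direction of the translation between the comodule structures of $\R_1 \in {}^{\mathbbm{G}}_{\mathbbm{G}}\mathcal{YD}^{\Phi}$ and $V \in {}^G_G\mathcal{YD}$, which follows routinely from the section $\iota$ fixed in Section 5.
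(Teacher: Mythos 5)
Your proposal is correct and follows essentially the same route the paper takes: the paper proves Theorem \ref{t7.2} implicitly by combining Theorem \ref{t5.17}(2) with the observation \eqref{eq7.3} that standardness makes both matrices the identity, Lemma \ref{l7.1} reducing the Majid-type condition to $q_{21}^{\n}=1$ (whence Table \ref{tab7.1}), and the collapsed formulas $a=x_{11}'$, $b=(\tfrac{\n}{\m}x_{12})'$, $d=x_{22}''$ from Proposition \ref{p5.12}. Your extra care about the direction of the translation between $\delta_L$ and $\sigma_L$ via the section $\iota$ is exactly the point the paper settles when it declares the associated $\mathscr{B}(V)$ standard.
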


\subsection{$\m=\n$ implies standard}
It turns out that the condition $\m=\n$ will always put us in the standard situation. More precisely, we have
\begin{lemma} \label{l7.3} Let $\MM$ be a rank $2$ Majid algebra over $\Z_{\m}\times \Z_{\n}$. If $\m=\n$, then $\MM$ is standard. \end{lemma}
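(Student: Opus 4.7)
The strategy is to exchange the generators $(\g_1,\g_2)$ of the base group for the coaction degrees appearing in $\R_1$, via an automorphism of $\mathbb{G} = \Z_\m \times \Z_\m$; this is only possible because, in this case, every generating pair of $\mathbb{G}$ is reachable from $(\g_1,\g_2)$ by an automorphism.

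By Corollary \ref{c3.7}, the cocycle $\Phi = \Phi_{a,b,d}$ is abelian, so $\R_1$ is a diagonal Yetter-Drinfeld module. Writing $\R_1 = kX_1 \oplus kX_2$ with $\delta_L(X_i) = \h_i \otimes X_i$ for some $\h_1, \h_2 \in \mathbb{G}$, I would then invoke the proof of Lemma \ref{l2.8} to conclude that the connectedness of the Gabriel quiver $Q(\MM)$ forces the pair $(\h_1,\h_2)$ to generate $\mathbb{G}$.

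Next I would use the hypothesis $\m = \n$: since every element of $\mathbb{G}$ then has order dividing $\m$, the assignment $\g_i \mapsto \h_i$ extends uniquely to a group endomorphism $\sigma$ of $\mathbb{G}$. Surjectivity of $\sigma$ follows from the generation property of the previous step, and finiteness of $\mathbb{G}$ promotes $\sigma$ to an automorphism.

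Finally, I would transport the Majid algebra structure along $\sigma$: replacing the left $k\mathbb{G}$-coaction $\delta_L$ by $(\sigma^{-1} \otimes \id) \circ \delta_L$ yields an isomorphic Majid algebra in which $\delta_L(X_i) = \g_i \otimes X_i$, while the associator is replaced by the pullback $\sigma^*(\Phi)$. By Lemma \ref{l3.6} this pullback is cohomologous to some canonical $\Phi_{a',b',d'}$, and a twist by the intervening $2$-cochain normalizes the associator without disturbing the coalgebra structure; the standard comodule relations on $X_1, X_2$ therefore persist, giving the desired standard form. The main subtlety---and the exact reason $\m = \n$ is indispensable---is the well-definedness of $\sigma$: once $\n > \m$, the generator $\g_2$ has order strictly larger than $\m$, so one cannot in general send it to an arbitrary element of $\mathbb{G}$, and the whole argument breaks down.
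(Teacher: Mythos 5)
Your proposal is correct and follows essentially the same route as the paper: both reduce the lemma to the group-theoretic fact that, because every element of $\Z_{\m}\times\Z_{\m}$ has order dividing $\m$, any generating pair is a standard pair (the image of $(\g_1,\g_2)$ under an automorphism). The only difference is cosmetic — you obtain the automorphism from the universal property of a group of exponent $\m$ on two generators (extend $\g_i\mapsto \h_i$ to an endomorphism, then use surjectivity plus finiteness), whereas the paper counts the $\m^2$ elements $h_1^sh_2^t$ to force $\langle h_1\rangle\cap\langle h_2\rangle=\{1\}$; both are valid and the remaining transport of structure is routine.
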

\begin{proof} It suffices to show that any two generators of $\Z_{\m}\times \Z_{\m}$ must be standard. In fact, let $h_1,h_2\in \Z_{\m}\times \Z_{\m}$ and assume that $h_1,h_2$ generate $\Z_{\m}\times \Z_{\m}$. Therefore, $h_1^{\m}=
h_2^{\m}=1$ and thus $h_1,h_2$ can only generate elements of the form $h_1^{s}h_2^{t}$ with $0\leq s<m,\ 0\leq t<m$. On the other hand, such elements should exhaust the whole $ \Z_{\m}\times \Z_{\m}$ by assumption. This forces $\langle h_1\rangle \cap \langle h_2\rangle=\{1\}$ and so $h_1, h_2$ make a set of standard generators.
\end{proof}

\subsection{The case $\m=\n=p$ with $p$ prime.}
Furthermore, if $\m=\n=p$ is a prime number, then we will see that Table \ref{tab7.1} shrinks enormously. For a better exposition of the classification results, we present them by several separate cases, namely, by $p=2,$ $p=3$ and $p > 3.$

Before we start, we make a useful simplification of notations. Recall that $$q_{ij}=\zeta_{p^2}^{x_{ij}},\;\;\;\;1\leq i,j\leq 2,$$
so it is possible to translate the equations of $q_{ij}$ in Table \ref{tab7.1} to some simpler ones in $x_{ij}.$

\begin{proposition} \label{p7.4}
Suppose $p=2$.
\begin{itemize}
\item[(1)] Take a Nichols algebra $\mathscr{B}(V)$ in Table \ref{tab7.2}. Then $\mathscr{B}(V)^{J_{a,b,d}}$ is a Nichols algebra in ${^{\mathbbm{G}}_{\mathbbm{G}}}\mathcal{YD}^{\Phi_{a,b,d}}$ and thus $\mathscr{B}(V)^{J_{a,b,d}}\# k\mathbbm{G}$ is a connected graded rank $2$ pointed Majid algebra over $\mathbbm{G}=\Z_{2}\times \Z_2$.
\item[(2)] Any finite-dimensional connected graded rank $2$ pointed Majid algebra over $\Z_{2}\times \Z_2$ is isomorphic to $\mathscr{B}(V)^{J_{a,b,d}}\# k\mathbbm{G}$ for some $\mathscr{B}(V)$ given in Table \ref{tab7.2}.
\end{itemize}
\end{proposition}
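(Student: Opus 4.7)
Both parts specialize the general framework of Sections 5 and 7 to the case $\mathbbm{m}=\mathbbm{n}=2$. Part (1) is essentially a declaration: once one verifies that each entry of Table 7.2 satisfies $q_{21}^{\mathbbm{n}} = q_{21}^2 = 1$ (so Lemma \ref{l7.1} gives a standard Nichols algebra of Majid type in ${^{G}_{G}\mathcal{YD}}$ with $G=\Z_4\times\Z_4$), Theorem \ref{t5.17}(1) combined with Theorem \ref{t7.2} immediately yields the associated Majid algebra $\mathscr{B}(V)^{J_{a,b,d}} \# k\mathbbm{G}$ as a finite-dimensional connected graded rank $2$ pointed Majid algebra over $\Z_2 \times \Z_2$.

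For part (2), my first step would be to invoke Lemma \ref{l7.3}, which forces any finite-dimensional rank $2$ pointed Majid algebra over $\Z_2 \times \Z_2$ to be standard. Theorem \ref{t7.2} then yields the required presentation $\mathscr{B}(V)^{J_{a,b,d}}\# kG/(g_1^2-1,g_2^2-1)$, modulo verifying that the allowable Nichols algebras $\mathscr{B}(V) \in {^{G}_{G}\mathcal{YD}}$, with $G = \Z_4 \times \Z_4$, are precisely those listed in Table 7.2. This reduces to pruning Table \ref{tab7.1} under two specializations: every structure constant $q_{ij} = \zeta_4^{x_{ij}}$ must be a fourth root of unity, and $q_{21}^2 = 1$.

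The enumeration then proceeds row by row through Table \ref{tab7.1}. Rows 6--10 and 12--17 each demand a parameter $\zeta$ lying in $R_r$ for some $r \in \{3,5,7,8,9,12,15,20,24\}$, none of which divides $4$; this forces $q_{11}$ or $q_{22}$ outside $\mu_4$, so these rows contribute nothing when $p=2$. The surviving rows 1--5 and 11 must then be analysed individually: in each case the condition $q_{21}^{2}=1$ translates into $x_{21} \in \{0,2\} \pmod 4$, while the remaining Dynkin constraints on $q_{11},q_{22}$ reduce to explicit restrictions on $x_{11},x_{22} \in \Z_4\setminus\{0\}$. The resulting exponent data assemble into Table 7.2, and the associated parameters $a,b,d$ are read off via the explicit formula of Proposition \ref{p5.12}.

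The principal obstacle here is purely bookkeeping: I will have to ensure that the degenerate cases collapse cleanly into Table 7.2 without either omission or double-counting. For instance, row 5 requires $q_{11}\notin R_4$, which in our fourth-root setting forces $q_{11}=-1$ and so merges with row 1 (up to the convention used to normalise Table 7.2), while in row 4 the identity $q_{11}^2=-1$ automatically pins down $q_{22}=-1$, potentially overlapping with row 3. These matchings are arithmetic checks modulo $4$ and introduce no new conceptual input beyond what is already available; the only real work is the careful organisation of the resulting finite list.
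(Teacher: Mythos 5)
Your proposal follows essentially the same route as the paper: part (1) is Lemma \ref{l7.1} plus Theorem \ref{t7.2}, and part (2) is Lemma \ref{l7.3} (standardness when $\m=\n$) followed by a row-by-row pruning of Table \ref{tab7.1} under the constraints that every $q_{ij}$ be a fourth root of unity and $q_{21}^2=1$; your exclusion of rows 6--10 and 12--17 on the grounds that the required primitive roots of order $3,5,7,8,9,12,15,20,24$ cannot be fourth roots of unity matches the paper exactly. One small correction to your bookkeeping: row 5 of Table \ref{tab7.1} does not ``merge with row 1'' --- its own hypotheses $q_{11}\notin R_4$ and $q_{11}\neq\pm 1$ are jointly incompatible with $q_{11}^4=1$, so for $p=2$ that row is simply empty (this is how the paper disposes of it); the distinction is immaterial for the final table but matters for the stated justification, since row 5 carries the binary tree $T_3$ rather than $T_1$ and so could not literally be absorbed into row 1.
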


{\setlength{\unitlength}{1mm}
\begin{table}[htd]\centering
\caption{Finite-dimensional rank $2$ Nichlos algebras over $_G^G \mathcal{Y}\mathcal{D}$ for $G=\Z_4\times \Z_4$.} \label{tab7.2}
\vspace{1mm}
\begin{tabular}{r|l|c}
\hline
  &\text{Structure constants of Dynkin diagrams} & \text{Binary tree}\\
\hline
\hline
  1.  &$ x_{12}+x_{21}=0\ \mathrm{or}\ 4,\ x_{11}\neq 0,\ x_{22}\neq 0,\ x_{21}\equiv 0 \ (\text{mod}\ 2)$ &$ T_1$\\
  \hline
  2. & $ x_{12}+x_{21}+x_{11}=4 \ \mathrm{or}\ 8,\ x_{11}=x_{22}\neq 0,\ x_{21}\equiv 0 \ (\text{mod}\ 2)$ & $T_1$\\
  \hline
  3. & $ x_{12}+x_{21}+x_{11}=4\ \mathrm{or}\ 8,\ x_{11}\neq 0, 2,\ x_{22}=2,\ x_{21}\equiv 0 \ (\text{mod}\ 2)$ &$T_2$\\
  \hline
  4. & $x_{12}+x_{21}\neq 0,2,4,6,\ x_{11}=x_{22}=2,\ x_{21}\equiv 0 \ (\text{mod}\ 2)  $& $T_2$\\
  \hline
  5. & $x_{12}+x_{21}+2x_{11}=4,8\ \mathrm{or}\ 12,\ x_{11}\neq 0, 2,\ x_{22}=2,\ x_{21}\equiv 0 \ (\text{mod}\ 2)$& $T_3$\\
  \hline
  6. & $x_{12}+x_{21}+3x_{11}\equiv 0\ (\mathrm{mod}\ 4),\ x_{22}-3x_{11}\equiv 0\ (\mathrm{mod}\ 4),\ x_{11}\neq 0,2,\ x_{21}\equiv 0 \ (\text{mod}\ 2)$& $T_8$\\
  \hline
\end{tabular}
\end{table}}

\begin{proof} Lemma \ref{l7.1} implies that all Nichols algebras $\mathscr{B}(V)$ in Table \ref{tab7.2} are of Majid type, as $x_{21}\equiv 0$ (mod $2$) in each case. Therefore $\mathscr{B}(V)^{J_{a,b,d}}$ is a Nichols algebra in ${^{\mathbbm{G}}_{\mathbbm{G}}}\mathcal{YD}^{\Phi_{a,b,d}}$ and we get a graded connected rank $2$ pointed Majid algebra $\mathscr{B}(V)^{J_{a,b,d}}\# k\mathbbm{G}$. The statement (1) is proved.

To show (2), we just need to check the Nichols algebras given in Table \ref{tab7.1} case by case if $\m=\n=p=2$ according to Theorem \ref{t7.2}.

1. For case 1 of Table \ref{tab7.1}, the condition $q_{12}q_{21}=\zeta_4^{x_{12}}\zeta_4^{x_{21}}=1$ is equivalent to $x_{12}+x_{21}=0\ \mathrm{or} \ 4$ since $0\leq x_{ij}<4.$ In addition, $q_{11}\neq 1$ and $q_{22}\neq 1$ amount to  $x_{11}\neq 0$ and $x_{22}\neq 0.$

2. For case 2 of Table \ref{tab7.1}, we have $q_{12}q_{21}=\zeta_4^{x_{12}}\zeta_4^{x_{21}}=q_{11}^{-1}=\zeta_4^{-x_{11}},$ which is the same as $x_{12}+x_{21}+x_{11}=4\ \mathrm{or} \ 8$ since $x_{11}\neq 0$ and $0\leq x_{ij}<4.$ Moreover, $q_{11}=q_{22}\neq 1$ amounts to $x_{11}=x_{22}\neq 0.$

3. The first item of case 3 of Table \ref{tab7.1}. The condition $q_{12}q_{21}=\zeta_4^{x_{12}}\zeta_4^{x_{21}}=q_{11}^{-1}=\zeta_4^{-x_{11}}$ amounts to $x_{12}+x_{21}+x_{11}=4\ \mathrm{or} \ 8,$  $q_{11}\neq \pm 1$  to $x_{11}\neq 0,2$ and $q_{22}=-1$ to $x_{22}=2.$

4. The second item of case 3 of Table \ref{tab7.1}. The condition $q_{12}q_{21}=\zeta_4^{x_{12}+x_{21}}\neq \pm 1$ is equivalent to $x_{12}+x_{21}\neq 0,2,4,6.$ Besides, $q_{11}=q_{22}=-1$ amounts to $x_{11}=x_{22}=2.$

5. Case 4 of Table \ref{tab7.1}. The equation $q_{12}q_{21}=q_{11}^{-2}$ amounts to $x_{12}+x_{21}+2x_{11}\equiv 0\ (\mathrm{mod}\ 4).$ As $0\leq x_{ij}<4$ and $x_{11}\neq 0,$ we have $x_{12}+x_{21}+2x_{11}=4,8,\ \mathrm{or }\ 12.$ In addition, $q_{11} \neq \pm 1$ amounts to $x_{11} \neq 0,2$ and thus the relation $q_{22}=q_{11}^2=-1$ is equivalent to $x_{22}=2.$

6. Case 11 of Table \ref{tab7.1}. Firstly, $q_{12}q_{21}=q_{11}^{-3}$ amounts to $x_{12}+x_{21}+3x_{11}\equiv 0\ (\mathrm{mod}\ 4).$ Secondly, $q_{22}=q_{11}^3$ amounts to $x_{22}\equiv 3x_{11}\ (\mathrm{mod} \ 4)$ and $q_{11}\neq \pm 1$ to
$x_{11}\neq 0,2.$

Finally we show that all other cases of Table \ref{tab7.1} will not occur. For case 5 of Table \ref{tab7.1}, the conditions $q_{11} \notin R_4$ and $q_{11}^4=1$ (as $p$=2) force $q_{11}=-1,$ which clearly contradicts another condition $q_{11} \ne -1$ in the same case. For the remaining cases, just notice that there are structure constants $q_{ij}$ which are not $4$-th root of unity.
\end{proof}

\begin{proposition}
Suppose $p=3$.
\begin{enumerate}
\item Take a Nichols algebra $\mathscr{B}(V)$ in Table \ref{tab7.3}. Then $\mathscr{B}(V)^{J_{a,b,d}}$ is a Nichols algebra in ${^{\mathbbm{G}}_{\mathbbm{G}}}\mathcal{YD}^{\Phi_{a,b,d}}$ and thus $\mathscr{B}(V)^{J_{a,b,d}}\# k\mathbbm{G}$ is a connected graded rank $2$ pointed Majid algebra over $\mathbbm{G}=\Z_3\times \Z_3$.

\item  Any finite-dimensional connected graded rank $2$ pointed Majid algebra over $\Z_3\times \Z_3$ is isomorphic to $\mathscr{B}(V)^{J_{a,b,d}}\# k\mathbbm{G}$ for some $\mathscr{B}(V)$ given in Table \ref{tab7.3}.
\end{enumerate}
\end{proposition}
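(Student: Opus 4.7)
The plan is to follow exactly the same line as the proof of Proposition~\ref{p7.4}, now specialized to $p=3,$ so that $\m=\n=3$ and $m=n=9.$ By Lemma~\ref{l7.3} every such Majid algebra is standard, so Theorem~\ref{t7.2} applies and the task reduces to determining, for each of the seventeen families listed in Table~\ref{tab7.1}, which structure-constant profiles are actually realizable in $G=\Z_9\times\Z_9,$ and then to translating the multiplicative conditions $q_{ij}=\zeta_9^{x_{ij}}$ (with $0\le x_{ij}<9$) into additive congruences modulo $9.$ The Majid-type condition from Lemma~\ref{l7.1} specializes to the single congruence $x_{21}\equiv 0\,(\mathrm{mod}\,3),$ which is appended to every surviving case.

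Part (1) is immediate from Figure II once Table~\ref{tab7.3} is assembled: the Majid-type condition is built into every entry of the table, so that $\mathscr{B}(V)^{J_{a,b,d}}$ lies in ${^{\mathbbm{G}}_{\mathbbm{G}}}\mathcal{YD}^{\Phi_{a,b,d}},$ and bosonization then yields the desired connected graded rank $2$ pointed Majid algebra over $\mathbbm{G}.$ The substantive content of part (2) is therefore contained in the elimination and translation steps that produce Table~\ref{tab7.3} itself.

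The elimination step is the principal (though largely mechanical) obstacle. Because each $g_i\in G$ has order dividing $9,$ every structure constant $q_{ij}$ must satisfy $q_{ij}^9=1.$ Consequently the families involving roots of unity of incompatible order are ruled out at once: cases $8,9$ ($\zeta\in R_{12}$), $12$ ($\zeta\in R_8$), $13$ ($\zeta\in R_{24}$), $14,15$ ($\zeta\in R_5\cup R_{20}$), $16$ ($\zeta\in R_{15}$) and $17$ ($\zeta\in R_7$) all force some structure constant outside $\{z\in k^{\ast}\mid z^9=1\}.$ Cases $3,5,7$ demand $q_{22}=-1,$ which is likewise not a ninth root of unity. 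For case $10$ one has $\zeta\in R_9,$ but a short direct check shows that in each of its three items some $q_{ii}$ is forced either to equal $-1$ or to have order $18,$ so this case also disappears. The only surviving families are therefore $1,2,4,6$ and $11.$

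For each surviving family the translation is routine. Writing $q_{ij}=\zeta_9^{x_{ij}},$ case $1$ becomes $x_{12}+x_{21}\in\{0,9\}$ with $x_{11}\neq 0,\ x_{22}\neq 0$; case $2$ becomes $x_{12}+x_{21}+x_{11}\in\{9,18\}$ with $x_{11}=x_{22}\neq 0$; case $4$ becomes $x_{12}+x_{21}+2x_{11}\equiv 0\,(\mathrm{mod}\,9)$ and $x_{22}\equiv 2x_{11}\,(\mathrm{mod}\,9)$ with $x_{11}\neq 0$; case $6$ becomes $x_{12}+x_{21}+x_{22}\equiv 0\,(\mathrm{mod}\,9)$ together with $x_{11}\in\{3,6\}$ and $x_{22}\not\equiv 0\,(\mathrm{mod}\,3)$; and case $11$ becomes $x_{12}+x_{21}+3x_{11}\equiv 0\,(\mathrm{mod}\,9),$ $x_{22}\equiv 3x_{11}\,(\mathrm{mod}\,9),$ $x_{11}\not\equiv 0\,(\mathrm{mod}\,3).$ Appending $x_{21}\equiv 0\,(\mathrm{mod}\,3)$ to each family and pairing it with the corresponding full binary tree from Table~\ref{tab7.1} yields precisely the rows of Table~\ref{tab7.3}, completing the classification.
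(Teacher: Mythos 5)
Your proposal is correct and follows exactly the route the paper intends: the paper's own proof of this proposition is simply ``similar to the proof of Proposition \ref{p7.4}'', and you have carried out that case-by-case elimination (via the order-$9$ constraint on the $q_{ij}$ and the Majid-type condition $x_{21}\equiv 0 \pmod 3$) and the multiplicative-to-additive translation in full, arriving at precisely the five surviving families of Table \ref{tab7.3}. No gaps.
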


{\setlength{\unitlength}{1mm}
\begin{table}[htd]\centering
\caption{Finite-dimensional rank $2$ Nichlos algebras over $_G^G \mathcal{Y}\mathcal{D}$ for $G=\Z_9\times \Z_9$.} \label{tab7.3}
\vspace{1mm}
\begin{tabular}{r|l|c}
\hline
  &\text{Structure constants of Dynkin diagrams} & \text{Binary tree}\\
\hline
\hline
  1.  & $x_{12}+x_{21}=0\ \mathrm{or}\ 9,x_{11}\neq 0, x_{22}\neq 0,\ x_{21}\equiv 0 \ (\text{mod}\ 3)$ & $T_1$\\
\hline
  2. & $x_{12}+x_{21}+x_{11}=9 \ \mathrm{or}\ 18, x_{11}=x_{22}\neq 0,\ x_{21}\equiv 0 \ (\text{mod}\ 3)$& $T_1$\\
  \hline
  3. & $x_{12}+x_{21}+2x_{11}\equiv 0\ (\mathrm{mod}\ 9), x_{22}-2x_{11}\equiv 0\ (\mathrm{mod}\ 9), x_{11}\neq 0,\ x_{21}\equiv 0 \ (\text{mod}\ 3)$& $T_3$\\
  \hline
  4. & $x_{12}+x_{21}+x_{22}\equiv 0\ (\mathrm{mod}\ 9), x_{11}=3\ \mathrm{or}\ 6, x_{22}\neq 0,3,6,\ x_{21}\equiv 0 \ (\text{mod}\ 3)$& $T_3$\\
  \hline
  5. & $x_{12}+x_{21}+3x_{11}\equiv 0\ (\mathrm{mod}\ 9), x_{22}-3x_{11}\equiv 0\ (\mathrm{mod}\ 9), x_{11}\neq 0,3,6,\ x_{21}\equiv 0 \ (\text{mod}\ 3)$&$T_8$\\
  \hline
\end{tabular}
\end{table}}

\begin{proof}
It is similar to the proof of Proposition \ref{p7.4}  and so we omit it.
\end{proof}

\begin{proposition}
Suppose $p>3$.
\begin{enumerate}
\item Take a Nichols algebra $\mathscr{B}(V)$ in Table \ref{tab7.4}. Then $\mathscr{B}(V)^{J_{a,b,d}}$ is a Nichols in ${^{\mathbbm{G}}_{\mathbbm{G}}}\mathcal{YD}^{\Phi_{a,b,d}}$ and thus $\mathscr{B}(V)^{J_{a,b,d}}\# k\mathbbm{G}$ is a connected graded rank $2$ pointed Majid algebra over $\mathbbm{G}=\Z_{p}\times \Z_p$.

\item  Any finite-dimensional connected graded rank $2$ pointed Majid algebra over $\Z_{p}\times \Z_p$ is isomorphic to $\mathscr{B}(V)^{J_{a,b,d}}\# k\mathbbm{G}$ for some $\mathscr{B}(V)$ given in Table \ref{tab7.4}.
\end{enumerate}
\end{proposition}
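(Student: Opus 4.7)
The plan is to adapt the argument of Proposition~7.4 (the $p=2$ case) essentially verbatim, with the number-theoretic inputs simplified by the strong restriction that $p>3$. The overall strategy for part~(1) is immediate from the machinery already developed: each entry of Table~\ref{tab7.4} explicitly enforces $x_{21}\equiv 0\pmod p$, which is equivalent to $q_{21}^{\n}=1$, i.e.\ the Majid-type condition of Lemma~\ref{l7.1}. Once this is checked row by row, Theorem~\ref{t5.17} combined with the construction behind Figure~II produces $\mathscr{B}(V)^{J_{a,b,d}}$ as a Nichols algebra in ${^{\mathbb{G}}_{\mathbb{G}}}\mathcal{YD}^{\Phi_{a,b,d}}$, and the bosonization gives the required rank $2$ Majid algebra over $\mathbb{Z}_p\times\mathbb{Z}_p$.

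For part~(2), I would invoke Lemma~\ref{l7.3} to reduce to the standard situation (since $\m=\n=p$), and then Theorem~\ref{t7.2} to guarantee that any such Majid algebra arises from some entry of Table~\ref{tab7.1} with the extra restriction $q_{ij}\in\mu_{p^2}$. The heart of the proof is then to eliminate every row of Table~\ref{tab7.1} except rows $1$, $2$, $4$, and $11$. The elementary fact driving the eliminations is that, for an odd prime $p>3$, the subgroup $\mu_{p^2}\subset k^\ast$ meets $R_n$ nontrivially only if $n\in\{1,p,p^2\}$; in particular $-1\notin\mu_{p^2}$, and $\mu_{p^2}\cap R_n=\emptyset$ for $n\in\{3,4,5,7,8,9,12,15,20,24\}$ (except in the degenerate cases $p=5$ or $p=7$, where the forced value of some other parameter still causes a contradiction).

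With this observation, the elimination is routine. Rows $3$, $5$, $14$, $15$, and $17$ are ruled out because they demand $q_{11}=-1$ or $q_{22}=-1$; rows $6$ and $7$ require $q_{11}\in R_3$, which is impossible; rows $8$, $9$, $10$, $12$, $13$, and $16$ require $\zeta\in R_{12}, R_9, R_8, R_{24}$, or $R_{15}$, all empty when intersected with $\mu_{p^2}$. Rows $1$, $2$, $4$, and $11$ impose no conditions on the orders of the $q_{ij}$ beyond $q_{ii}\ne\pm 1$ and $q_{11}\notin R_3$, and the last two are automatic in $\mu_{p^2}$ for $p>3$. For the surviving rows, translating the multiplicative conditions on $q_{ij}$ into additive congruences on $x_{ij}$ (via $q_{ij}=\zeta_{p^2}^{x_{ij}}$) yields exactly the four rows of Table~\ref{tab7.4}, completing the proof.

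The main obstacle is simply bookkeeping: one must make sure that, when a row of Table~\ref{tab7.1} could \emph{a priori} be compatible with $p=5$ or $p=7$ via some $R_5$ or $R_7$ parameter, another constraint in the same row (typically $q_{22}=-1$, which is incompatible with any odd $p$) still forces the row out. Since every such row in fact contains a $-1$-constraint, no subtle case survives, and the argument reduces entirely to the elementary divisibility check described above.
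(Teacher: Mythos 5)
Your proposal is correct and follows essentially the same route as the paper, whose proof of this proposition is literally ``Similar to the proof of Proposition \ref{p7.4}'': reduce to the standard case via Lemma \ref{l7.3} and Theorem \ref{t7.2}, then eliminate the rows of Table \ref{tab7.1} whose structure constants require roots of unity of order not dividing $p^{2}$ (in particular $-1$), leaving exactly rows 1, 2, 4 and 11, which translate into Table \ref{tab7.4}. Your explicit handling of the $p=5$ and $p=7$ borderline cases (ruled out by the accompanying $q_{22}=-1$ constraints) is a detail the paper leaves implicit, but it is the right check.
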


{\setlength{\unitlength}{1mm}
\begin{table}[htd]\centering
\caption{Finite-dimensional rank $2$ Nichlos algebras over $_G^G \mathcal{Y}\mathcal{D}$ for $G=\Z_{p^2}\times \Z_{p^2}$.} \label{tab7.4}
\vspace{1mm}
\begin{tabular}{r|l|c}
\hline
  &\text{Structure constants of Dynkin diagrams} & \text{Binary tree}\\
\hline
\hline
  1.  & $x_{12}+x_{21}=0\ \mathrm{or}\ p^2,x_{11}\neq 0, x_{22}\neq 0,\ x_{21}\equiv 0 \ (\text{mod}\ p)$ &$ T_1$\\
\hline
  2. & $x_{12}+x_{21}+x_{11}=p^2 \ \mathrm{or}\ 2p^2, x_{11}=x_{22}\neq 0,\ x_{21}\equiv 0 \ (\text{mod}\ p)$& $T_1$\\
\hline
  3. & $x_{12}+x_{21}+2x_{11}\equiv 0\ (\mathrm{mod}\ p^2), x_{22}-2x_{11}\equiv 0\ (\mathrm{mod}\ p^2),\ x_{21}\equiv 0 \ (\text{mod}\ p)$&$ T_3$\\
\hline
  4. & $x_{12}+x_{21}+3x_{11}\equiv 0\ (\mathrm{mod}\ p^2), x_{22}-3x_{11}\equiv 0\ (\mathrm{mod}\ p^2),\ x_{21}\equiv 0 \ (\text{mod}\ p)$&$T_8$\\
\hline
\end{tabular}
\end{table}}

\begin{proof}
Similar to the proof of Proposition \ref{p7.4}.
\end{proof}

\section{Examples: finite rank $2$ quasi-quantum groups over $\Z_\n$}

This section is devoted to a complete list of finite-dimensional connected rank $2$ pointed Majid algebras over an arbitrary cyclic group $\Z_\n.$ For the rest of the paper, $\mathbbm{G}=\Z_\n$ and $ G=\Z_{n}$ with $n=\n^2$. As before, we start with the following lemma.

\begin{lemma} \label{l8.1}
Suppose $h_1,h_2\in \Z_n$ and $h_1,h_2$ generate $\Z_n.$ Then there exists a generator $g$ of $\Z_n$ such that $h_1=g^s, \ h_2=g^t$ and $(s,t)=1.$
\end{lemma}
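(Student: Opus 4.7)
The plan is to reduce the statement to an elementary fact about $\gcd$ in $\Z$. Fix any generator $\g_0$ of $\Z_n$ and write $h_1=\g_0^{a}$, $h_2=\g_0^{b}$ for integers $a,b$. The hypothesis that $h_1,h_2$ generate $\Z_n$ is equivalent to saying that the subgroup $a\Z+b\Z+n\Z=\Z$, i.e.\ $\gcd(a,b,n)=1$.

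Next I would set $d=\gcd(a,b)$ and write $a=ds$, $b=dt$ with $\gcd(s,t)=1$. The key observation is that
\[
1=\gcd(a,b,n)=\gcd(d,n),
\]
so $d$ is a unit modulo $n$. Consequently $g:=\g_0^{d}$ has order $n$ and is again a generator of $\Z_n$. With this choice we immediately get
\[
h_1=\g_0^{a}=\g_0^{ds}=g^{s},\qquad h_2=\g_0^{b}=\g_0^{dt}=g^{t},
\]
and $\gcd(s,t)=1$ by construction, which is exactly the claim.

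There is no real obstacle here; the only thing to double-check is the equivalence between ``$h_1,h_2$ generate $\Z_n$'' and ``$\gcd(a,b,n)=1$'', which is a standard one-line computation using B\'ezout's identity applied in $\Z/n\Z$. Everything else is bookkeeping with $\gcd$.
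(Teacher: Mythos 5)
Your proof is correct and is essentially the same as the paper's: both write $h_1,h_2$ as powers $a,b$ of a fixed generator, observe that the generating hypothesis forces $\gcd(\gcd(a,b),n)=1$, and then pass to the new generator obtained by raising the old one to the power $\gcd(a,b)$, so that the resulting exponents $s,t$ are coprime by construction. No gaps.
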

\begin{proof}
Let $g'$ be a generator of $\Z_n,$ then $h_1=g'^{s}, h_2=g'^{t}$ for some $0\leq s,t<n.$  Since $h_1,h_2$ generate $\Z_n,$ so we have $\langle h_1,h_2\rangle=\langle g'^{(s,t)}\rangle=\Z_n.$ This implies that $((s,t),n)=1,$ hence $g=g'^{(s,t)}$ is another generator. With this generator we have $h_1=g^{\frac{s}{(s,t)}},h_2=g^{\frac{t}{(s,t)}}$ and $(\frac{s}{(s,t)},\frac{t}{(s,t)})=1.$
\end{proof}

Again keep the notations of Sections 5 and 6. Let $\mathscr{B}(V)$ be a finite-dimensional rank $2$ Nichols algebras in ${^G_G\mathcal{YD}}$ with
$G=\Z_n=\langle g\rangle.$  By Lemma \ref{l8.1}, we can assume that
$$h_1=g^s,\;\;\;\;\;h_2=g^t  \quad \mathrm{with} \quad (s,t)=1.$$
Furthermore, if we assume $g\diamond X_1=\zeta_{n}^\alpha X_1,\ g\diamond X_2=\zeta_{n}^\beta X_2$ for some $0\leq \alpha,\beta<n,$ then we have
\begin{eqnarray}
q_{11}&=&\zeta_{n}^{s\alpha},\ \ \ \ \ q_{12}= \zeta_{n}^{s\beta},\notag \\
q_{21}&=&\zeta_{n}^{t\alpha},\ \ \ \ \ q_{22}=\zeta_{n}^{t\beta}.\notag
\end{eqnarray}
So equations \eqref{eq5.5}, or equivalently the congruence equations \eqref{eq6.8} are reduced to :
\begin{equation} \label{eq8.1}
ds\equiv \alpha \ (\mathrm{mod}\ \n),\ \ \ \ dt \equiv \beta \ (\mathrm{mod}\ \n).
\end{equation}
We remark that in this situation there is no the element
$g_1$ and   $a=b=0,\ g_2=g.$
Therefore, Proposition \ref{p5.14} implies that
\begin{lemma} The system of equations \eqref{eq8.1} is soluble if and only if
\begin{equation}\alpha t\equiv \beta s\ (\mathrm{mod}\ \n).\end{equation}
\end{lemma}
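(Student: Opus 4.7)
My plan is to prove the equivalence by elementary modular arithmetic, exploiting the coprimality $(s,t)=1$.

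For the necessity direction ($\Rightarrow$), suppose $d$ solves \eqref{eq8.1}. I would simply multiply the first congruence by $t$ and the second by $s$, obtaining $dst\equiv \alpha t$ and $dst\equiv \beta s$ modulo $\n$, which immediately yields $\alpha t\equiv \beta s\ (\mathrm{mod}\ \n)$. This is a one-line computation and is essentially forced by the shape of \eqref{eq8.1}.

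For the sufficiency direction ($\Leftarrow$), the idea is a B\'ezout construction. Since $(s,t)=1$, I would pick integers $u,v$ with $us+vt=1$ and then set
\[ d := u\alpha + v\beta. \]
Then I would verify directly, using the assumed congruence $\alpha t\equiv \beta s\ (\mathrm{mod}\ \n)$ to swap $\beta s$ for $\alpha t$ (and vice versa), that
\[ ds = u\alpha s + v\beta s \equiv u\alpha s + v\alpha t = \alpha(us+vt) = \alpha \pmod{\n}, \]
and symmetrically $dt\equiv \beta\pmod{\n}$. This produces the desired solution, and if required one may reduce $d$ modulo $\n$ to land in the range $0\leq d<\n$ demanded by the classification setup.

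This lemma is in fact a clean one-dimensional shadow of Proposition~\ref{p5.14}; the only thing to double-check is that the general criterion \eqref{eq5.15}, after specialising $\alpha_1=s$, $\alpha_2=t$, $\beta_1=\beta_2=0$ (there is no $g_1$, so $a,b$ are irrelevant and forced to $0$) and $x_{21}=\alpha$, $x_{22}=\beta$, indeed collapses to the stated condition $\alpha t\equiv \beta s\ (\mathrm{mod}\ \n)$. I do not foresee any real obstacle: the argument is genuinely elementary, and the only care needed is matching the indices of the general setup to the cyclic case ($\m=1$, $\n=\n$) so that the reader sees this really is an instance of the preceding proposition rather than an independent fact.
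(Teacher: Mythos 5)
Your argument is correct, but it takes a genuinely different route from the paper. The paper offers no standalone proof of this lemma: it observes that in the cyclic case the general congruences \eqref{eq6.8} collapse to \eqref{eq8.1} (with $a=b=0$ and no generator $g_1$) and then simply invokes Proposition \ref{p5.14}, whose proof runs through the matrix identities \eqref{eq6.4} and \eqref{eq5.16} and Lemma \ref{l3}. You instead prove the equivalence directly: necessity by cross-multiplying the two congruences in \eqref{eq8.1}, and sufficiency via the B\'ezout element $d=u\alpha+v\beta$ with $us+vt=1$, which is legitimate because Lemma \ref{l8.1} guarantees $(s,t)=1$ in this setting, and reducing $d$ modulo $\n$ puts it in the required range. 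Note that your candidate coincides with the paper's $d=(t_1\alpha+t_2\beta)''$, so the two arguments certify the same solution; yours is shorter and self-contained, while the paper's route makes the lemma visibly an instance of the general rank-$2$ criterion.

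One correction to your closing remark about matching the lemma to Proposition \ref{p5.14}: since $h_1=g^s$ and $h_2=g^t$ with $g=g_2$, the correct dictionary is $\alpha_2=s$ and $\beta_2=t$ (with $\alpha_1,\beta_1$ irrelevant), together with $x_{21}=\alpha$, $x_{22}=\beta$; then \eqref{eq5.15} becomes $\beta s-\alpha t\equiv 0\ (\mathrm{mod}\ \n)$ as desired. With your assignment $\alpha_2=t$, $\beta_2=0$ the criterion would instead read $\beta t\equiv 0\ (\mathrm{mod}\ \n)$, which is not the stated condition. This slip affects only the supplementary consistency check, not your main proof, which stands on its own.
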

Now the $2$-cochain $J_{a,b,d}$ with $d=(t_1\alpha+t_2\beta)''$ (where $t_1, t_2$ satisfy $t_1s+t_2t=1$) is simplified as:
$$J_{a,b,d}=J_{d}:\; k\mathbb{Z}_n\otimes k\mathbb{Z}_{n}\to k,\;\;\;\;(g^i,g^j)\mapsto q^{di(j-j')}.$$
As $\mathscr{B}(V)$ is always assumed to be finite-dimensional, hence  $$s\alpha\not\equiv 0\not\equiv t\beta\ (\mathrm{mod} \ n).$$ For simplicity, we mark the following conditions by $(\ast)$:
\begin{equation*} \alpha t\equiv \beta s\ (\mathrm{mod}\ \n),\;\;\;\; s\alpha\not\equiv 0\not\equiv t\beta\ (\mathrm{mod} \ n).  \;\;\;\; (\ast)\end{equation*}

\begin{theorem} \label{t8.3}
 \begin{enumerate}
 \item Take a Nichols algebra $\mathscr{B}(V)$ in Table \ref{tab8.1}. Then $\mathscr{B}(V)^{J_d}$ is a Nichols algebra in ${^{\mathbbm{G}}_{\mathbbm{G}}}\mathcal{YD}^{\Phi_d}$ and thus $\mathscr{B}(V)^{J_d}\# k\mathbbm{G}$ is a connected graded rank $2$ pointed Majid algebra over $\mathbbm{G}=\Z_{\n}$.

\item Any finite-dimensional connected graded rank $2$ pointed Majid algebra over $\Z_{\n}$ is isomorphic to $\mathscr{B}(V)^{J_d}\# k\mathbbm{G}$ for some $\mathscr{B}(V)$ given in Table \ref{tab8.1}.
\end{enumerate}
\end{theorem}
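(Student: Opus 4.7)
The plan is to deduce Theorem \ref{t8.3} as a specialization of the general classification Theorem \ref{t5.17} to the cyclic case $\mathbbm{G}=\Z_{\n}$, together with a case-by-case inspection of Heckenberger's classification. Since $\mathbbm{G}$ is cyclic there is no $\g_1$ component, so the associator reduces to $\Phi_{a,b,d}$ with $a=b=0$, and only the third parameter $d$ is relevant; consequently, the twisting $J_{a,b,d}$ simplifies to the $2$-cochain $J_d(g^i,g^j)=q^{di(j-j')}$ displayed just before the theorem.

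For part (1), let $\mathscr{B}(V)$ be any entry of Table \ref{tab8.1}; by construction (to be made in part (2)) it satisfies condition $(\ast)$, hence by the preceding lemma the system \eqref{eq8.1} is soluble and $\mathscr{B}(V)\in{^{G}_{G}\mathcal{YD}}$ is of Majid type in the sense of Definition after Proposition \ref{p5.14}. Theorem \ref{t5.17}(1) then yields that $\MM(V,G)=\mathscr{B}(V)^{J_d}\#k\mathbbm{G}$ is a finite-dimensional connected coradically graded pointed Majid algebra of rank $2$, establishing (1).

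For part (2), start from an arbitrary finite-dimensional connected graded rank $2$ pointed Majid algebra $\MM$ over $\Z_{\n}$. By Theorem \ref{t5.17}(2), $\MM\cong\MM(V,G)$ for some rank $2$ connected Nichols algebra $\mathscr{B}(V)\in{^{G}_{G}\mathcal{YD}}$ of Majid type, with $G=\Z_{n}$. Using Lemma \ref{l8.1}, we may choose a generator $g$ of $G$ so that $h_1=g^s$, $h_2=g^t$ with $(s,t)=1$; setting $g\diamond X_i=\zeta_n^{\alpha}X_1$, $g\diamond X_2=\zeta_n^{\beta}X_2$ gives the formulas for $q_{ij}$ listed before \eqref{eq8.1}. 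Finite-dimensionality of $\mathscr{B}(V)$ forces $q_{11},q_{22}\neq 1$, i.e.\ $s\alpha\not\equiv 0\not\equiv t\beta\pmod{n}$, while Majid type (by the preceding lemma) is precisely $\alpha t\equiv \beta s\pmod{\n}$; together these are the conditions $(\ast)$.

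The remaining work, which I expect to be the main obstacle, is to intersect $(\ast)$ with each of the seventeen rows of Heckenberger's Table 6.2, expressing each equation among the $q_{ij}$ as a congruence in $\alpha s,\alpha t,\beta s,\beta t$ modulo $n=\n^{2}$. For each row one substitutes $q_{ij}=\zeta_{n}^{(\,\cdot\,)}$, rewrites the defining Dynkin-diagram relations as linear congruences in $\alpha,\beta$ over $\Z_{n}$, and then imposes $(\ast)$ together with the orders forced on $\zeta$ by the row. Many rows will be incompatible with $(\ast)$ (for instance, any row whose structure constants involve roots of unity of order not dividing $n$ will drop out except for particular $\n$), and the surviving cases, together with the explicit value $d=(t_1\alpha+t_2\beta)''$ where $t_1s+t_2t=1$, yield exactly the list of Table \ref{tab8.1}. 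Once the table is verified, the presentation of $\mathscr{B}(V)^{J_d}$ is provided uniformly by Proposition \ref{np7.5}, and the bosonization $\mathscr{B}(V)^{J_d}\#k\mathbbm{G}$ gives $\MM$, completing (2).
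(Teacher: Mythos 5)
Your proposal follows essentially the same route as the paper: reduce to Theorem \ref{t5.17} and the solubility criterion $\alpha t\equiv\beta s\pmod{\n}$ (giving condition $(\ast)$ together with $s\alpha\not\equiv 0\not\equiv t\beta\pmod{n}$ from finite-dimensionality), then translate each row of Table 6.2 into congruences in $s\alpha,s\beta,t\alpha,t\beta$ modulo $n$ to obtain Table \ref{tab8.1}. The paper likewise leaves the full case-by-case translation to the reader, carrying out only two representative rows (case 2 and the first item of case 8), so your outline matches its proof in both structure and level of detail.
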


{\setlength{\unitlength}{1mm}
\begin{table}[htd]\centering
\caption{Finite-dimensional rank $2$ Nichlos algebras over $_G^G \mathcal{Y}\mathcal{D}$ for $G=\Z_{n}.$} \label{tab8.1}
\vspace{1mm}
\renewcommand{\arraystretch}{1.2}
\begin{tabular}{r|l|l|c}
\hline
& \text{$\n$} &\text{Structure constants of Dynkin diagrams} & \text{Binary tree}\\
\hline
\hline
1. & & $s\beta+t\alpha\equiv 0\ (\mathrm{mod}\ n)$, \ ($\ast$) & $ T_1 $ \\
\hline
2. & & $s\beta+t\alpha\equiv -s\alpha \equiv -t\beta\ (\mathrm{mod}\ n) $, \ ($\ast$)& $T_1$\\
\hline
\multirow{2}{*}{3.} & \multirow{2}{*}{$2|\n$} & $s\beta+t\alpha+s\alpha \equiv 0,\ s\alpha\not\equiv \frac{n}{2},\ t\beta\equiv \frac{n}{2}\ (\mathrm{mod}\ n)$,\ ($\ast$)&$ T_2$\\
\cline{3-4}
                    &                         & $s\beta+t\alpha\not\equiv 0,\frac{n}{2}, \ s\alpha\equiv t\beta\equiv \frac{n}{2}\ (\mathrm{mod}\ n)$ , \ ($\ast$)& $T_2$\\
\hline
4. & & $s\beta+t\alpha+2s\alpha \equiv 0, 2s\alpha\equiv t\beta\ (\mathrm{mod}\ n)$ , \ ($\ast$)& $T_3$\\
\hline
5. & $2|\n$ & $s\beta+t\alpha+2s\alpha \equiv 0,s\alpha\not\equiv \frac{n}{4},\frac{n}{2},\frac{3n}{4},\ t\beta\equiv \frac{n}{2}(\mathrm{mod}\ n)$, \ ($\ast$) & $ T_3$\\
\hline
6. & $3|\n$ & $s\beta+t\alpha+2s\alpha \equiv 0, 3s\alpha \equiv 0,\ 3t\beta\not\equiv 0 \ (\mathrm{mod}\ n) $ , \ ($\ast$)&$T_3$\\
\hline
7. & $6|\n$ & $s\beta+t\alpha+2s\alpha \equiv 0, 3s\alpha \equiv 0, t\beta \equiv  \frac{n}{2}\ (\mathrm{mod}\ n)$, \ ($\ast$)& $T_3$\\
\hline
\multirow{3}{*}{8.} & \multirow{3}{*}{$6|\n$} & $s\beta+t\alpha\equiv \frac{(k+2)n}{4},s\alpha\equiv \frac{(3-k)n}{6},t\beta\equiv\frac{(3+k)n}{4}\ (\mathrm{mod}\ n), \ (k,12)=1$, \ ($\ast$) &$T_4$\\
\cline{3-4}
                    &                         & $s\beta+t\alpha\equiv \frac{kn}{12},s\alpha\equiv \frac{(3+k)n}{6},t\beta\equiv\frac{n}{2}\ (\mathrm{mod}\ n),\ (k,12)=1$, \ ($\ast$) &$T_5$\\
\cline{3-4}
                    &                         & $s\beta+t\alpha\equiv \frac{kn}{12},s\alpha\equiv \frac{(2+k)n}{4},t\beta\equiv\frac{n}{2}\ (\mathrm{mod}\ n),\ (k,12)=1$, \ ($\ast$)&$T_7$\\
\hline
\multirow{3}{*}{9.} & \multirow{3}{*}{$6|\n$} & $s\beta+t\alpha\equiv \frac{kn}{12},s\alpha\equiv \frac{(3+k)n}{6}\equiv t\beta\ (\mathrm{mod}\ n), \ (k,12)=1$, \ ($\ast$)&$T_4$\\
\cline{3-4}
                    &                         & $s\beta+t\alpha\equiv\frac{kn}{4},s\alpha\equiv \frac{(3+k)n}{6},t\beta\equiv\frac{n}{2}\ (\mathrm{mod}\ n),\ (k,12)=1$, \ ($\ast$)&$T_5$\\
\cline{3-4}
                    &                         & $s\beta+t\alpha\equiv\frac{(2+k)n}{4},s\alpha\equiv \frac{(6-k)n}{12},t\beta\equiv\frac{n}{2}\ (\mathrm{mod}\ n),\ (k,12)=1$, \ ($\ast$)&$T_7$\\
\hline
\multirow{3}{*}{10.} & \multirow{3}{*}{$6|\n$} & $s\beta+t\alpha\equiv \frac{-2kn}{9},s\alpha\equiv \frac{(9+2k)n}{18},t\beta\equiv \frac{kn}{3}\ (\mathrm{mod}\ n), \ (k,18)=1$, \ ($\ast$)&$T_6$\\
\cline{3-4}
                     &                         & $s\beta+t\alpha\equiv \frac{-kn}{9},s\alpha\equiv \frac{kn}{3},t\beta\equiv \frac{n}{2}\ (\mathrm{mod}\ n),\ (k,9)=1$, \ ($\ast$)&$T_9$\\
\cline{3-4}
                     &                         & $s\beta+t\alpha\equiv \frac{kn}{9},s\alpha\equiv \frac{(9+4k)n}{18},t\beta\equiv \frac{n}{2}\ (\mathrm{mod}\ n), \ (k,18)=1$&$T_{14}$\\
\hline
11. & & $s\beta+t\alpha+3s\alpha\equiv 0, 3s\alpha\equiv t\beta, s\alpha\not\equiv \frac{n}{2},\frac{n}{3},\frac{2n}{3}\ (\mathrm{mod}\ n) $, \ ($\ast$)& $T_8$\\
\hline
\multirow{3}{*}{12.} & \multirow{3}{*}{$4|\n$} & $s\beta+t\alpha\equiv \frac{kn}{8},s\alpha\equiv \frac{kn}{4},t\beta\equiv \frac{-kn}{8}\ (\mathrm{mod}\ n),\ (k,8)=1$, \ ($\ast$)&$T_8$\\
\cline{3-4}
                     &                         & $s\beta+t\alpha\equiv \frac{(4-k)n}{8},s\alpha\equiv \frac{kn}{4},t\beta\equiv \frac{n}{2}\ (\mathrm{mod}\ n),\ (k,8)=1$, \ ($\ast$)&$T_8$\\
\cline{3-4}
                     &                         & $s\beta+t\alpha\equiv  \frac{(4+k)n}{8},s\alpha\equiv \frac{kn}{8},t\beta\equiv \frac{n}{2}\ (\mathrm{mod}\ n),\ (k,8)=1$, \ ($\ast$)&$T_8$\\
\hline
\multirow{4}{*}{13.} & \multirow{4}{*}{$12|\n$} & $s\beta+t\alpha\equiv \frac{(12-k)n}{24},s\alpha\equiv \frac{kn}{4},t\beta\equiv \frac{-kn}{6}\ (\mathrm{mod}\ n), \ (k,24)=1$, \ ($\ast$)&$T_{10}$\\
\cline{3-4}
                     &                          & $s\beta+t\alpha\equiv \frac{kn}{24},s\alpha\equiv \frac{kn}{4},t\beta\equiv \frac{-kn}{24}\ (\mathrm{mod}\ n),\ (k,24)=1$, \ ($\ast$)&$T_{13}$\\
\cline{3-4}
                     &                          & $s\beta+t\alpha\equiv \frac{5kn}{24},s\alpha\equiv \frac{(3-k)n}{6},t\beta\equiv \frac{n}{2}\ (\mathrm{mod}\ n),\ (k,24)=1$, \ ($\ast$)&$T_{17}$\\
\cline{3-4}
                     &                          & $s\beta+t\alpha\equiv \frac{-5kn}{24},s\alpha\equiv \frac{kn}{24},t\beta\equiv \frac{n}{2}\ (\mathrm{mod}\ n),\ (k,24)=1$, \ ($\ast$)&$T_{21}$\\
\hline
\multirow{2}{*}{14.} & \multirow{2}{*}{$10|\n$} & $s\beta+t\alpha\equiv \frac{2kn}{5},s\alpha\equiv \frac{kn}{5},t\beta\equiv \frac{n}{2}\ (\mathrm{mod}\ n),\ (k,5)=1$, \ ($\ast$)&$T_{11}$\\
\cline{3-4}
                     &                          & $s\beta+t\alpha\equiv \frac{-2kn}{5},s\alpha\equiv \frac{(5-4k)n}{10},t\beta\equiv \frac{n}{2}\ (\mathrm{mod}\ n),\ (k,10)=1$, \ ($\ast$)&$T_{16}$\\
\hline
\multirow{2}{*}{15.} & \multirow{2}{*}{$10|\n$} & $s\beta+t\alpha\equiv \frac{-3kn}{20},s\alpha\equiv \frac{kn}{20},t\beta\equiv \frac{n}{2}\ (\mathrm{mod}\ n),\ (k,20)=1$, \ ($\ast$)&$T_{11}$\\
\cline{3-4}
                     &                          & $s\beta+t\alpha\equiv \frac{3kn}{20},s\alpha\equiv \frac{(5-k)n}{10},t\beta\equiv \frac{n}{2}\ (\mathrm{mod}\ n),\ (k,20)=1$, \ ($\ast$)&$T_{16}$\\
\hline
\multirow{4}{*}{16.} & \multirow{4}{*}{$30|\n$} & $s\beta+t\alpha\equiv \frac{(15+8k)n}{30},s\alpha\equiv\frac{(15+2k)n}{30},t\beta\equiv\frac{kn}{3}\ (\mathrm{mod}\ n),\ (k,30)=1$, \ ($\ast$)&$T_{12}$\\
\cline{3-4}
                     &                          & $s\beta+t\alpha\equiv \frac{(15-4k)n}{30},s\alpha\equiv \frac{kn}{5},t\beta\equiv \frac{(15-8k)n}{30}\ (\mathrm{mod}\ n),\ (k,30)=1$, \ ($\ast$)&$T_{15}$\\
\cline{3-4}
                     &                          & $s\beta+t\alpha\equiv \frac{(15-4k)n}{30},s\alpha\equiv \frac{kn}{3},t\beta\equiv \frac{n}{2}\ (\mathrm{mod}\ n),\ (k,30)=1$, \ ($\ast$)&$T_{18}$\\
\cline{3-4}
                     &                          & $s\beta+t\alpha\equiv \frac{(15+4k)n}{30},s\alpha\equiv \frac{kn}{5},t\beta\equiv \frac{n}{2}\ (\mathrm{mod}\ n),\ (k,30)=1$, \ ($\ast$)&$T_{20}$\\
\hline
\multirow{2}{*}{17.} & \multirow{2}{*}{$14|\n$} & $s\beta+t\alpha\equiv \frac{(7-6k)n}{14},s\alpha\equiv \frac{(7+2k)n}{14},t\beta\equiv \frac{n}{2}\ (\mathrm{mod}\ n),\ (k,14)=1$, \ ($\ast$)&$T_{19}$\\
\cline{3-4}
                     &                          & $s\beta+t\alpha\equiv \frac{(7+6k)n}{14},s\alpha\equiv \frac{(7-4k)n}{14},t\beta\equiv \frac{n}{2}\ (\mathrm{mod}\ n),\ (k,14)=1$, \ ($\ast$)&$T_{22}$\\
\hline
\end{tabular}
\end{table}}

\begin{proof}
The proof can be carried out in the same way as that of Proposition \ref{p7.4}. We just work on two typical examples, namely case 2 and the first item of case 8, to explain our equations about structure constants in Table \ref{tab8.1}.

\emph{Case 2.} Firstly, the equation $q_{12}q_{21}=\zeta_{n}^{s\beta}\zeta_{n}^{t\alpha}=q_{11}^{-1}=\zeta_{n}^{-s\alpha}$ is equivalent to $s\beta+t\alpha+s\alpha \equiv 0\ (\mathrm{mod} \ n).$ Secondly, the condition $q_{11}=q_{22}\neq 1$ amounts to $s\alpha\equiv t\beta\not\equiv 0\ (\mathrm{mod}\ n).$

\emph{Case 8, item 1.} According to Table \ref{tab7.1}, $q_{11}=-\zeta^{-2}$ is a $6$-th primitive root of unit, it follows that $6|n,$ and further that $6|\n.$ Let $\zeta_{12}=\zeta_{n}^{\frac{n}{12}}$ be a $12$-th primitive root of unit. Then $\zeta=\zeta_{12}^k$ for some $k$ such that $(k,12)=1.$ Again by Table \ref{tab7.1}, $q_{12}q_{21}=-\zeta^3,$ i.e. $\zeta_{n}^{s\beta}\zeta_{n}^{t\alpha}=\zeta_{12}^{6+3k}=\zeta_{n}^{\frac{(6+3k)n}{12}},$ this amounts to $ s\beta+t\alpha\equiv \frac{(k+2)n}{4} \ (\mathrm{mod}\ n).$ In addition, the equation $q_{11}=-\zeta^{-2}$ is equivalent to $s\alpha\equiv \frac{(3-k)n}{6}\ (\mathrm{mod}\ n)$ and $q_{22}=-\zeta^2$ is equivalent to $t\beta\equiv\frac{(3+k)n}{4}\ (\mathrm{mod}\ n).$
\end{proof}

\begin{remarks} We conclude the paper with several remarks.
\begin{enumerate}
\item In Theorem \ref{t8.3}, if $n$ is a prime number and $n > 2$, then one can check Table \ref{tab8.1} case by case and conclude that there are no genuine rank $2$ graded pointed Majid algebras. This fact offers another explanation to a seemingly mysterious (at least to the authors) result of Etingof and Gelaki \cite[Theorem 3.1]{EG2}, which states that elementary graded genuine quasi-Hopf algebras over a cyclic group of prime($\ne 2$) order are of rank $\le 1.$
\item If $n$ has no small prime divisors, namely 2, 3, 5, 7, then essentially the finite-dimensional grtaded pointed Majid algebras constructed in Theorem \ref{t8.3} already appeared in the recent work of Angiono \cite{A}, only in the dual version.
\item The idea of the present paper should be useful in a much broader context. In particular, the results of the rank 2 case may be extended to diagonal Nichols algebras in $_G^G\mathcal{YD}^\Phi$ of higher ranks in a similar working philosophy of the theory of pointed Hopf algebras. This shall be dealt with in our forthcoming work.
\end{enumerate}
\end{remarks}

\begin{appendix}
\section{Full Binary Trees}
\begin{center}
\begin{minipage}{0.8\textwidth} 
\begin{center}
\setlength{\unitlength}{5pt}
\begin{picture}(2,2)
\put(1,1){\circle*{.5}}
\end{picture}
T1
\hfill \hfill
\begin{picture}(4,2)
\put(2,2){\line(-1,-1){2}}
\put(2,2){\line(1,-1){2}}
\put(2,2){\circle*{.5}}
\put(0,0){\circle*{.5}}
\put(4,0){\circle*{.5}}
\end{picture}
\,T2
\hfill \hfill
\begin{picture}(6,4)
\put(0,2){\line(1,1){2}}
\put(2,4){\line(1,-1){4}}
\put(2,0){\line(1,1){2}}
\put(2,4){\circle*{.5}}
\put(0,2){\circle*{.5}}
\put(4,2){\circle*{.5}}
\put(2,0){\circle*{.5}}
\put(6,0){\circle*{.5}}
\end{picture}
\,T3
\hfill \hfill
\begin{picture}(6,4)
\put(3,4){\line(-1,-1){2}}
\put(3,4){\line(1,-1){2}}
\put(1,2){\line(1,-2){1}}
\put(1,2){\line(-1,-2){1}}
\put(5,2){\line(-1,-2){1}}
\put(5,2){\line(1,-2){1}}
\put(3,4){\circle*{.5}}
\put(1,2){\circle*{.5}}
\put(5,2){\circle*{.5}}
\put(0,0){\circle*{.5}}
\put(2,0){\circle*{.5}}
\put(4,0){\circle*{.5}}
\put(6,0){\circle*{.5}}
\end{picture}
\,T4
\end{center}
\vspace{1em}
\begin{center}
\setlength{\unitlength}{5pt}
\begin{picture}(6,6)
\put(0,4){\line(1,1){2}}
\put(1,0){\line(1,2){1}}
\put(2,2){\line(1,1){2}}
\put(2,2){\line(1,-2){1}}
\put(2,6){\line(1,-1){4}}
\put(2,6){\circle*{.5}}
\put(0,4){\circle*{.5}}
\put(4,4){\circle*{.5}}
\put(2,2){\circle*{.5}}
\put(6,2){\circle*{.5}}
\put(1,0){\circle*{.5}}
\put(3,0){\circle*{.5}}
\end{picture}
\,T5
\hfill \hfill
\begin{picture}(8,6)
\put(0,2){\line(1,1){4}}
\put(2,4){\line(1,-2){1}}
\put(4,6){\line(1,-1){4}}
\put(6,4){\line(-1,-2){2}}
\put(5,2){\line(1,-2){1}}
\put(4,6){\circle*{.5}}
\put(2,4){\circle*{.5}}
\put(6,4){\circle*{.5}}
\put(0,2){\circle*{.5}}
\put(3,2){\circle*{.5}}
\put(5,2){\circle*{.5}}
\put(8,2){\circle*{.5}}
\put(4,0){\circle*{.5}}
\put(6,0){\circle*{.5}}
\end{picture}
\,T6
\hfill \hfill
\begin{picture}(8,6)
\put(0,4){\line(1,1){2}}
\put(2,2){\line(1,1){2}}
\put(4,0){\line(1,1){2}}
\put(2,6){\line(1,-1){6}}
\put(2,6){\circle*{.5}}
\put(0,4){\circle*{.5}}
\put(4,4){\circle*{.5}}
\put(2,2){\circle*{.5}}
\put(6,2){\circle*{.5}}
\put(4,0){\circle*{.5}}
\put(8,0){\circle*{.5}}
\end{picture}
\,T7
\hfill \hfill
\begin{picture}(7,6)
\put(0,4){\line(1,1){2}}
\put(1,0){\line(1,2){1}}
\put(2,2){\line(1,1){2}}
\put(2,2){\line(1,-2){1}}
\put(2,6){\line(1,-1){4}}
\put(6,2){\line(-1,-2){1}}
\put(6,2){\line(1,-2){1}}
\put(2,6){\circle*{.5}}
\put(0,4){\circle*{.5}}
\put(4,4){\circle*{.5}}
\put(2,2){\circle*{.5}}
\put(6,2){\circle*{.5}}
\put(1,0){\circle*{.5}}
\put(3,0){\circle*{.5}}
\put(5,0){\circle*{.5}}
\put(7,0){\circle*{.5}}
\end{picture}
\,T8
\end{center}
\vspace{1em}
\begin{center}
\setlength{\unitlength}{5pt}
\begin{picture}(6,8)
\put(0,6){\line(1,1){2}}
\put(1,2){\line(1,2){1}}
\put(2,4){\line(1,1){2}}
\put(2,4){\line(1,-2){1}}
\put(2,8){\line(1,-1){4}}
\put(1,2){\line(-1,-2){1}}
\put(1,2){\line(1,-2){1}}
\put(2,8){\circle*{.5}}
\put(0,6){\circle*{.5}}
\put(4,6){\circle*{.5}}
\put(2,4){\circle*{.5}}
\put(6,4){\circle*{.5}}
\put(1,2){\circle*{.5}}
\put(3,2){\circle*{.5}}
\put(0,0){\circle*{.5}}
\put(2,0){\circle*{.5}}
\end{picture}
\,T9
\hfill \hfill
\begin{picture}(8,8)
\put(1,6){\line(-1,-2){1}}
\put(1,6){\line(1,-2){1}}
\put(3,8){\line(-1,-1){2}}
\put(3,8){\line(1,-1){4}}
\put(5,6){\line(-1,-1){2}}
\put(3,4){\line(-1,-2){1}}
\put(3,4){\line(1,-2){1}}
\put(7,4){\line(-1,-2){1}}
\put(7,4){\line(1,-2){1}}
\put(2,2){\line(-1,-2){1}}
\put(2,2){\line(1,-2){1}}
\put(3,8){\circle*{.5}}
\put(1,6){\circle*{.5}}
\put(5,6){\circle*{.5}}
\put(0,4){\circle*{.5}}
\put(2,4){\circle*{.5}}
\put(3,4){\circle*{.5}}
\put(7,4){\circle*{.5}}
\put(2,2){\circle*{.5}}
\put(4,2){\circle*{.5}}
\put(6,2){\circle*{.5}}
\put(8,2){\circle*{.5}}
\put(1,0){\circle*{.5}}
\put(3,0){\circle*{.5}}
\end{picture}
\,T10
\hfill \hfill
\begin{picture}(8,8)
\put(0,0){\line(1,2){1}}
\put(1,2){\line(1,-2){1}}
\put(1,6){\line(1,1){2}}
\put(3,8){\line(1,-1){4}}
\put(1,2){\line(1,1){4}}
\put(3,4){\line(1,-1){2}}
\put(5,2){\line(-1,-2){1}}
\put(5,2){\line(1,-2){1}}
\put(7,4){\line(-1,-2){1}}
\put(7,4){\line(1,-2){1}}
\put(3,8){\circle*{.5}}
\put(1,6){\circle*{.5}}
\put(5,6){\circle*{.5}}
\put(3,4){\circle*{.5}}
\put(7,4){\circle*{.5}}
\put(1,2){\circle*{.5}}
\put(5,2){\circle*{.5}}
\put(6,2){\circle*{.5}}
\put(8,2){\circle*{.5}}
\put(0,0){\circle*{.5}}
\put(2,0){\circle*{.5}}
\put(4,0){\circle*{.5}}
\put(6,0){\circle*{.5}}
\end{picture}
\,T11
\hfill \hfill
\begin{picture}(8,8)
\put(1,6){\line(-1,-2){1}}
\put(1,6){\line(1,-2){1}}
\put(3,8){\line(-1,-1){2}}
\put(3,8){\line(1,-1){4}}
\put(5,6){\line(-1,-1){2}}
\put(3,4){\line(-1,-2){1}}
\put(3,4){\line(1,-2){1}}
\put(7,4){\line(-1,-1){2}}
\put(7,4){\line(1,-2){1}}
\put(5,2){\line(-1,-2){1}}
\put(5,2){\line(1,-2){1}}
\put(3,8){\circle*{.5}}
\put(1,6){\circle*{.5}}
\put(5,6){\circle*{.5}}
\put(0,4){\circle*{.5}}
\put(2,4){\circle*{.5}}
\put(3,4){\circle*{.5}}
\put(7,4){\circle*{.5}}
\put(2,2){\circle*{.5}}
\put(4,2){\circle*{.5}}
\put(5,2){\circle*{.5}}
\put(8,2){\circle*{.5}}
\put(4,0){\circle*{.5}}
\put(6,0){\circle*{.5}}
\end{picture}
\,T12
\end{center}
\vspace{1em}
\begin{center}
\setlength{\unitlength}{5pt}
\begin{picture}(7,8)
\put(2,8){\line(-1,-1){2}}
\put(2,8){\line(1,-1){4}}
\put(4,6){\line(-3,-2){3}}
\put(1,4){\line(-1,-2){1}}
\put(1,4){\line(1,-2){1}}
\put(6,4){\line(-1,-2){1}}
\put(6,4){\line(1,-2){1}}
\put(2,2){\line(-1,-2){1}}
\put(2,2){\line(1,-2){1}}
\put(5,2){\line(-1,-2){1}}
\put(5,2){\line(1,-2){1}}
\put(2,8){\circle*{.5}}
\put(0,6){\circle*{.5}}
\put(4,6){\circle*{.5}}
\put(1,4){\circle*{.5}}
\put(6,4){\circle*{.5}}
\put(0,2){\circle*{.5}}
\put(2,2){\circle*{.5}}
\put(5,2){\circle*{.5}}
\put(7,2){\circle*{.5}}
\put(1,0){\circle*{.5}}
\put(3,0){\circle*{.5}}
\put(4,0){\circle*{.5}}
\put(6,0){\circle*{.5}}
\end{picture}
\,T13
\hfill \hfill
\begin{picture}(9,8)
\put(2,8){\line(-1,-1){2}}
\put(2,8){\line(1,-1){6}}
\put(4,6){\line(-1,-1){2}}
\put(6,4){\line(-1,-1){2}}
\put(8,2){\line(-1,-2){1}}
\put(8,2){\line(1,-2){1}}
\put(2,8){\circle*{.5}}
\put(0,6){\circle*{.5}}
\put(4,6){\circle*{.5}}
\put(2,4){\circle*{.5}}
\put(6,4){\circle*{.5}}
\put(4,2){\circle*{.5}}
\put(8,2){\circle*{.5}}
\put(7,0){\circle*{.5}}
\put(9,0){\circle*{.5}}
\end{picture}
\,T14
\hfill \hfill
\begin{picture}(10,8)
\put(3,8){\line(-1,-1){2}}
\put(3,8){\line(1,-1){6}}
\put(5,6){\line(-1,-1){4}}
\put(3,4){\line(1,-2){1}}
\put(7,4){\line(-1,-2){1}}
\put(1,2){\line(-1,-2){1}}
\put(1,2){\line(1,-2){1}}
\put(9,2){\line(-1,-2){1}}
\put(9,2){\line(1,-2){1}}
\put(3,8){\circle*{.5}}
\put(1,6){\circle*{.5}}
\put(5,6){\circle*{.5}}
\put(3,4){\circle*{.5}}
\put(7,4){\circle*{.5}}
\put(1,2){\circle*{.5}}
\put(4,2){\circle*{.5}}
\put(6,2){\circle*{.5}}
\put(9,2){\circle*{.5}}
\put(0,0){\circle*{.5}}
\put(2,0){\circle*{.5}}
\put(8,0){\circle*{.5}}
\put(10,0){\circle*{.5}}
\end{picture}
\,T15
\hfill \hfill
\begin{picture}(8,8)
\put(1,8){\line(-1,-2){1}}
\put(1,8){\line(1,-1){6}}
\put(3,6){\line(-1,-1){2}}
\put(1,4){\line(-1,-2){1}}
\put(1,4){\line(1,-2){1}}
\put(5,4){\line(-1,-1){2}}
\put(3,2){\line(-1,-2){1}}
\put(3,2){\line(1,-2){1}}
\put(7,2){\line(-1,-2){1}}
\put(7,2){\line(1,-2){1}}
\put(1,8){\circle*{.5}}
\put(0,6){\circle*{.5}}
\put(3,6){\circle*{.5}}
\put(1,4){\circle*{.5}}
\put(5,4){\circle*{.5}}
\put(0,2){\circle*{.5}}
\put(2,2){\circle*{.5}}
\put(3,2){\circle*{.5}}
\put(7,2){\circle*{.5}}
\put(2,0){\circle*{.5}}
\put(4,0){\circle*{.5}}
\put(6,0){\circle*{.5}}
\put(8,0){\circle*{.5}}
\end{picture}
\,T16
\end{center}
\vspace{1em}
\begin{center}
\setlength{\unitlength}{5pt}
\hfill
\begin{picture}(7,10)
\put(4,10){\line(-1,-1){2}}
\put(4,10){\line(1,-1){2}}
\put(6,8){\line(-1,-1){4}}
\put(6,8){\line(1,-2){1}}
\put(4,6){\line(1,-1){2}}
\put(2,4){\line(-1,-2){2}}
\put(2,4){\line(1,-2){1}}
\put(6,4){\line(-1,-2){1}}
\put(6,4){\line(1,-2){1}}
\put(1,2){\line(1,-2){1}}
\put(4,10){\circle*{.5}}
\put(2,8){\circle*{.5}}
\put(6,8){\circle*{.5}}
\put(4,6){\circle*{.5}}
\put(7,6){\circle*{.5}}
\put(2,4){\circle*{.5}}
\put(6,4){\circle*{.5}}
\put(1,2){\circle*{.5}}
\put(3,2){\circle*{.5}}
\put(5,2){\circle*{.5}}
\put(7,2){\circle*{.5}}
\put(0,0){\circle*{.5}}
\put(2,0){\circle*{.5}}
\end{picture}
\,T17
\hfill \hfill
\begin{picture}(7,10)
\put(4,10){\line(-1,-1){2}}
\put(4,10){\line(1,-1){2}}
\put(6,8){\line(-1,-1){4}}
\put(6,8){\line(1,-2){1}}
\put(4,6){\line(1,-1){2}}
\put(2,4){\line(-1,-2){1}}
\put(2,4){\line(1,-2){1}}
\put(6,4){\line(-1,-2){2}}
\put(6,4){\line(1,-2){1}}
\put(5,2){\line(1,-2){1}}
\put(4,10){\circle*{.5}}
\put(2,8){\circle*{.5}}
\put(6,8){\circle*{.5}}
\put(4,6){\circle*{.5}}
\put(7,6){\circle*{.5}}
\put(2,4){\circle*{.5}}
\put(6,4){\circle*{.5}}
\put(1,2){\circle*{.5}}
\put(3,2){\circle*{.5}}
\put(5,2){\circle*{.5}}
\put(7,2){\circle*{.5}}
\put(4,0){\circle*{.5}}
\put(6,0){\circle*{.5}}
\end{picture}
\,T18
\hfill \hfill
\begin{picture}(15,10)
\put(1,2){\line(1,-2){1}}
\put(1,2){\line(-1,-2){1}}
\put(5,2){\line(1,-2){1}}
\put(5,2){\line(-1,-2){1}}
\put(9,2){\line(1,-2){1}}
\put(9,2){\line(-1,-2){1}}
\put(13,2){\line(1,-2){1}}
\put(13,2){\line(-1,-2){1}}
\put(3,4){\line(1,-1){2}}
\put(3,4){\line(-1,-1){2}}
\put(11,4){\line(1,-1){2}}
\put(11,4){\line(-1,-1){2}}
\put(7,6){\line(2,-1){4}}
\put(7,6){\line(-2,-1){4}}
\put(14,6){\line(1,-2){1}}
\put(14,6){\line(-1,-2){1}}
\put(10,8){\line(-3,-2){3}}
\put(10,8){\line(2,-1){4}}
\put(7,10){\line(-3,-2){3}}
\put(7,10){\line(3,-2){3}}
\put(7,10){\circle*{.5}}
\put(4,8){\circle*{.5}}
\put(10,8){\circle*{.5}}
\put(7,6){\circle*{.5}}
\put(14,6){\circle*{.5}}
\put(3,4){\circle*{.5}}
\put(11,4){\circle*{.5}}
\put(13,4){\circle*{.5}}
\put(15,4){\circle*{.5}}
\put(1,2){\circle*{.5}}
\put(5,2){\circle*{.5}}
\put(9,2){\circle*{.5}}
\put(13,2){\circle*{.5}}
\put(0,0){\circle*{.5}}
\put(2,0){\circle*{.5}}
\put(4,0){\circle*{.5}}
\put(6,0){\circle*{.5}}
\put(8,0){\circle*{.5}}
\put(10,0){\circle*{.5}}
\put(12,0){\circle*{.5}}
\put(14,0){\circle*{.5}}
\end{picture}
\,T19
\hfill \makebox{}
\end{center}
\vspace{1em}
\begin{center}
\setlength{\unitlength}{5pt}
\hfill
\begin{picture}(9,10)
\put(2,10){\line(-1,-1){2}}
\put(2,10){\line(1,-1){6}}
\put(4,8){\line(-1,-1){2}}
\put(6,6){\line(-1,-1){2}}
\put(4,4){\line(-1,-2){1}}
\put(4,4){\line(1,-2){2}}
\put(8,4){\line(-1,-2){1}}
\put(8,4){\line(1,-2){1}}
\put(5,2){\line(-1,-2){1}}
\put(2,10){\circle*{.5}}
\put(0,8){\circle*{.5}}
\put(4,8){\circle*{.5}}
\put(2,6){\circle*{.5}}
\put(6,6){\circle*{.5}}
\put(4,4){\circle*{.5}}
\put(8,4){\circle*{.5}}
\put(3,2){\circle*{.5}}
\put(5,2){\circle*{.5}}
\put(7,2){\circle*{.5}}
\put(9,2){\circle*{.5}}
\put(4,0){\circle*{.5}}
\put(6,0){\circle*{.5}}
\end{picture}
\,T20
\hfill \hfill
\begin{picture}(11,10)
\put(2,10){\line(-1,-1){2}}
\put(2,10){\line(1,-1){8}}
\put(4,8){\line(-1,-1){2}}
\put(6,6){\line(-1,-1){2}}
\put(8,4){\line(-1,-1){2}}
\put(4,4){\line(-1,-2){1}}
\put(4,4){\line(1,-2){1}}
\put(10,2){\line(-1,-2){1}}
\put(10,2){\line(1,-2){1}}
\put(2,10){\circle*{.5}}
\put(0,8){\circle*{.5}}
\put(4,8){\circle*{.5}}
\put(2,6){\circle*{.5}}
\put(6,6){\circle*{.5}}
\put(4,4){\circle*{.5}}
\put(8,4){\circle*{.5}}
\put(3,2){\circle*{.5}}
\put(5,2){\circle*{.5}}
\put(6,2){\circle*{.5}}
\put(10,2){\circle*{.5}}
\put(9,0){\circle*{.5}}
\put(11,0){\circle*{.5}}
\end{picture}
\,T21
\hfill \hfill
\begin{picture}(16,10)
\put(1,10){\line(-1,-2){1}}
\put(1,10){\line(2,-1){12}}
\put(5,8){\line(-2,-1){4}}
\put(1,6){\line(-1,-2){1}}
\put(1,6){\line(1,-2){1}}
\put(9,6){\line(-2,-1){4}}
\put(5,4){\line(-1,-1){2}}
\put(5,4){\line(1,-1){2}}
\put(13,4){\line(-1,-1){2}}
\put(13,4){\line(1,-1){2}}
\put(3,2){\line(-1,-2){1}}
\put(3,2){\line(1,-2){1}}
\put(7,2){\line(-1,-2){1}}
\put(7,2){\line(1,-2){1}}
\put(11,2){\line(-1,-2){1}}
\put(11,2){\line(1,-2){1}}
\put(15,2){\line(-1,-2){1}}
\put(15,2){\line(1,-2){1}}
\put(1,10){\circle*{.5}}
\put(0,8){\circle*{.5}}
\put(5,8){\circle*{.5}}
\put(1,6){\circle*{.5}}
\put(9,6){\circle*{.5}}
\put(0,4){\circle*{.5}}
\put(2,4){\circle*{.5}}
\put(5,4){\circle*{.5}}
\put(13,4){\circle*{.5}}
\put(3,2){\circle*{.5}}
\put(7,2){\circle*{.5}}
\put(11,2){\circle*{.5}}
\put(15,2){\circle*{.5}}
\put(2,0){\circle*{.5}}
\put(4,0){\circle*{.5}}
\put(6,0){\circle*{.5}}
\put(8,0){\circle*{.5}}
\put(10,0){\circle*{.5}}
\put(12,0){\circle*{.5}}
\put(14,0){\circle*{.5}}
\put(16,0){\circle*{.5}}
\end{picture}
\,T22
\hfill \makebox{}
\end{center}
\end{minipage}
\end{center}
\end{appendix}

\vskip 15pt

\noindent{\bf Acknowledgement:} Part of the work was done while Huang and Liu were visiting the University of Stuttgart financially supported by the DAAD. They would like to thank their host Professor Steffen K\"onig for the kind hospitality.


\begin{thebibliography}{99}
\bibitem{an}
Andruskiewitsch, Nicol\'as: \emph{On finite-dimensional Hopf algebras}. Proceedings of the International Congress of Mathematicians, Vol. \textbf{2} (Seoul, 2014), 117-142, KYUNG MOON Sa Co. Ltd., Korea.

\bibitem{as1}
Andruskiewitsch, Nicol\'as; Schneider, Hans-J\"urgen: \emph{Lifting of quantum linear spaces and pointed Hopf algebras of order $p^3$}. J. Algebra \textbf{209} (1998), no. 2, 658-691.

\bibitem{as}
Andruskiewitsch, Nicol\'as; Schneider, Hans-J\"urgen: \emph{Finite quantum groups and Cartan matrices}. Adv. Math \textbf{154} (2000), 1-45.

\bibitem{as2}
Andruskiewitsch, Nicol\'as; Schneider, Hans-J\"urgen: \emph{Pointed Hopf algebras}. New directions in Hopf algebras, 1-68, Math. Sci. Res. Inst. Publ., \textbf{43}, Cambridge Univ. Press, Cambridge, 2002.

\bibitem{AS1}
Andruskiewitsch, Nicol\'as; Schneider, Hans-J\"urgen: \emph{On the classification of finite-dimensional pointed Hopf algebras}. Ann. of Math. (2) \textbf{171} (2010), no. 1, 375-417.

\bibitem{A}
Angiono, Iv\'an Ezequiel: \emph{Basic quasi-Hopf algebras over cyclic groups}. Adv. Math. \textbf{225} (2010), no. 6, 3545-3575.

\bibitem{Ang}
Angiono, Iv\'an Ezequiel: \emph{On Nichols algebras of diagonal type}. J. Reine Angew. Math. \textbf{683} (2013), 189-251.


\bibitem{DPR}
Dijkgraaf, Robbert; Pasquier, Vincent; Roche, Philippe: \emph{Quasi Hopf algebras, group cohomology and orbifold models}. \emph{Nuclear Phys. B Proc. Suppl.} \textbf{18}B (1990), 60-72.

\bibitem{D}
Drinfeld, Vladimir G.: \emph{Quantum groups}. Proceedings of the International Congress of Mathematicians, Vol. \textbf{1}, (Berkeley, Calif., 1986), 798-820, Amer. Math. Soc., Providence, RI, 1987.

\bibitem{Dr}
Drinfeld, Vladimir G.: \emph{Quasi-Hopf algebras}. (Russian) Algebra i Analiz \textbf{1} (1989), no. 6, 114-148; translation in Leningrad Math. J. \textbf{1} (1990), no. 6, 1419-1457.

\bibitem{EM}
Eilenberg, Samuel; MacLane, Saunders: \emph{Cohomology theory of Abelian groups and homotopy theory}. I. Proc. Nat. Acad. Sci. U. S. A. \textbf{36}, (1950). 443-447.

\bibitem{EG1}
Etingof, Pavel; Gelaki, Shlomo: \emph{Finite-dimensional quasi-Hopf algebras with radical of codimension 2}. Math. Res. Lett. \textbf{11} (2004), no. 5-6, 685-696.

\bibitem{EG2}
Etingof, Pavel; Gelaki, Shlomo: \emph{On radically graded finite-dimensional quasi-Hopf algebras}. Mosc. Math. J. \textbf{5} (2005), no. 2, 371-378.

\bibitem{EG3}
Etingof, Pavel; Gelaki, Shlomo: \emph{Liftings of graded quasi-Hopf algebras with radical of prime codimension}. J. Pure Appl. Algebra \textbf{205} (2006), no. 2, 310-322.

\bibitem{ENO}
Etingof, Pavel; Nikshych, Dimitri; Ostrik, Victor: \emph{On fusion categories}. Ann. of Math. (2) \textbf{162} (2005), no. 2, 581-642.

\bibitem{EO}
Etingof, Pavel; Ostrik, Victor: \emph{Finite tensor categories.} Mosc. Math. J. \textbf{4} (2004), no. 3, 627-654.

\bibitem{G}
Gelaki, Shlomo: \emph{Basic quasi-Hopf algebras of dimension $n^3$}.  J. Pure Appl. Algebra \textbf{198} (2005), no. 1-3, 165-174.

\bibitem{H0}
Heckenberger, Istvan: \emph{The Weyl groupoid of a Nichols algebra of diagonal type}. Invent. Math. \textbf{164} (2006), no. 1, 175-188.

\bibitem{h}
Heckenberger, Istvan: \emph{Examples of finite-dimensional rank 2 nichols algebras of diagonal type}. Compositio Math. \textbf{143} (2007) 165-190.

\bibitem{h2}
Heckenberger, Istvan: \emph{Rank 2 Nichols algebras with finite arithmetic root system}. Algebr. Represent. Theory \textbf{11} (2008), no. 2, 115-132.

\bibitem{H3}
Heckenberger, Istvan: \emph{Classification of arithmetic root systems}. Adv. Math. \textbf{220} (2009), no. 1, 59-124.

\bibitem{qha1}
Huang, Hua-Lin: \emph{Quiver approaches to quasi-Hopf algebras}. J. Math. Phys. \textbf{50} (4) (2009) 043501, 9pp.

\bibitem{qha2}
Huang, Hua-Lin: \emph{From projective representations to quasi-quantum groups}. Sci. China Math. \textbf{55} (2012), no. 10, 2067-2080.

\bibitem{qha3}
Huang, Hua-Lin; Liu, Gongxiang; Ye, Yu: \emph{Quivers, quasi-quantum groups and finite tensor categories}. Comm. Math. Phys. \textbf{303} (2011), no. 3, 595-612.

\bibitem{bgrc1}
Huang, Hua-Lin; Liu, Gongxiang; Ye, Yu: \emph{ The braided monoidal structures on a class of linear Gr-categories}. Algebr. Represent. Theory \textbf{17} (2014), no. 4, 1249-1265.

\bibitem{bgrc2}
Huang, Hua-Lin; Liu, Gongxiang; Ye, Yu: \emph{On Braided Linear Gr-categories}. arXiv:1310.1529.

\bibitem{qha5}
Huang, Hua-Lin; Liu, Gongxiang; Ye, Yu: \emph{Graded elementary quasi-Hopf algebras of tame representation type}. Israel Math. J., in press. arXiv:1401.6727.

\bibitem{js}
Joyal, Andr\'e; Street, Ross: \emph{Braided tensor categories}. Adv. Math. \textbf{102} (1993), no. 1, 20-78.

\bibitem{majid}
Majid, Shahn: \emph{Algebras and Hopf algebras in braided categories}. Advances in Hopf algebras (Chicago, IL, 1992), 55-105, Lecture Notes in Pure and Appl. Math., \textbf{158}, Dekker, New York, 1994.

\bibitem{MN}
Mason, Geoffrey; Ng, Siu-Hung: \emph{Group cohomology and gauge equivalence of some twisted quantum doubles}. Trans. Amer. Math. Soc. \textbf{353} (2001), no. 9, 3465-3509

\bibitem{Mon}
Montgomery, Susan: {\tt Hopf algebras and their actions on rings}. CBMS Lecture Notes \textbf{82}, Amer. Math. Soc., 1993.


\bibitem{O}
Ostrik, Victor: \emph{Multi-fusion categories of Harish-Chandra bimodules}. Proceedings of the International Congress of Mathematicians, Vol. \textbf{3} (Seoul, 2014), 121-142, KYUNG MOON Sa Co. Ltd., Korea.

%

\end{thebibliography}
\end{document}